\documentclass{article}
\newtheorem{theorem}{Theorem}[section]

\newtheorem{lemma}[theorem]{Lemma}
\newtheorem{corollary}[theorem]{Corollary}

\newtheorem{remark}[theorem]{Remark}
\def\whitebox{{\hbox{\hskip 1pt
 \vrule height 6pt depth 1.5pt
 \lower 1.5pt\vbox to 7.5pt{\hrule width
    3.2pt\vfill\hrule width 3.2pt}%
 \vrule height 6pt depth 1.5pt
 \hskip 1pt } }}
\def\qed{\ifhmode\allowbreak\else\nobreak\fi\hfill\quad\nobreak
     \whitebox\medbreak}
\newcommand{\proof}{\noindent{\it Proof.}\ }

\newcommand{\ignore}[1]{}
\usepackage{amsmath}
\usepackage{amssymb}
 \usepackage{tikz}

\usepackage{amssymb}
\usepackage{mathrsfs}
\usepackage{}
\usepackage{amsfonts}
 \usepackage{color}
\usepackage{braket}
 \usepackage{tikz}

\title{The Existence of Diagonal Quantum Latin Squares with Maximum Cardinality
\thanks{
Supported by NSFC Grant No. 12271390.

\ $^\dag$Corresponding author: Yang Li (liyang$\_$lll@163.com).
}
}

\author{\small  Lin Huang and Yang Li$^{\dag}$\\
 \small Department of Mathematics,
  Soochow University,
 Suzhou 215006, P. R. China
}
\date{}

\begin{document}

\maketitle

\begin{abstract}
\medskip
A quantum Latin square of order \(n\), denoted by \(\operatorname{QLS}(n)\), is an \(n \times n\) square whose entries are unit column vectors in the \(n\)-dimensional Hilbert space \(\mathcal{H}_n\), such that each row and each column forms an orthonormal basis of \(\mathcal{H}_n\). The cardinality of a QLS($n$) is the number of distinct  vectors
up to a global phase
in the array.
A \(\mathrm{QLS}(n)\) whose main diagonal and anti-diagonal each forms an orthonormal basis of \(\mathcal{H}_n\) is called a diagonal quantum Latin square (\(\mathrm{DQLS}(n)\)).
In this paper, we focus on the existence of
the \(\mathrm{DQLS}(n)\) with maximum cardinality ($\operatorname{MCDQLS}(n)$). 
By  employing direct constructions based on 
row-quantum Latin rectangle and special  complete mapping, 
together with the recursive techniques such as the singular direct product construction, 
We have almost completely determined the existence of \(\mathrm{MCDQLS}(n)\), except for a few exceptional cases. This result is based on the study of the existence of idempotent \(\mathrm{QLS}(n)\) with maximum cardinality (\(\mathrm{MCQLS}(n)\)), and implies an existence result for pandiagonal quantum Latin squares with maximum cardinality (\(\mathrm{MCPQLS}(n)\)).

\

\noindent {\bf Keywords}: quantum Latin squares, maximum cardinality, diagonal,  row-quantum Latin rectangles,  complete
mapping, singular direct product construction, existence

\smallskip
\end{abstract}

\section{Introduction}

A Latin square of order \(n\), denoted by \(\mathrm{LS}(n)\), is an \(n \times n\) square with entries from the \(n\)-element symbol set \(\{0,1,\dots,n-1\}\) such that each row and each column forms a permutation of the symbol set. With the development of quantum information theory, some classical combinatorial objects in discrete mathematics have been successfully quantized.
As a fundamental structure in combinatorial design theory, Latin squares were generalized to quantum Latin squares by Musto and Vicary \cite{cite1} in 2016. They further showed that these novel structures can be used to construct unitary error bases (UEBs).

A quantum Latin square of order $n$,  denoted by $\operatorname{QLS}(n)$,  is an $n \times n$ square whose entries are unit column vectors from the $n$-dimensional Hilbert space $\mathcal{H}_n$,  such that each row and column forms an orthonormal basis of $\mathcal{H}_n$. Subsequently, quantum Latin squares further attracted the attention of researchers.

 In 2017, 
Musto \cite{Musto2017} introduced  the notions of weakly orthogonal and orthogonal quantum Latin squares, which proved 
crucial for constructing mutually unbiased bases (MUBs). In 2021, Nechita and Pillet \cite{Nechita21} proposed the concept of quantum Sudoku, which is a special case of quantum Latin squares. 
In the same year, Paczos et al. \cite{Paczos21}  introduced the cardinality measure for quantum Latin squares, proving the existence of quantum Sudoku with maximum cardinality and deriving new MUBs. They also showed that the possible cardinality of QLS$(2)$ is $2$, and that of QLS$(3)$ is $3$.

A QLS($n$) can be obtained by replacing each entry \( i \in \{0, 1, \dots, n-1\} \) in a classical Latin square with the computational basis vector \( \ket{i} \in \mathcal{H}_n \). A QLS($n$) is called \textit{classical} if all entries are constrained to the computational basis \(\{\ket{0}, \ket{1}, \dots, \ket{n-1}\}\).
In quantum theory, two unit vectors \(\ket{u}, \ket{v} \in \mathcal{H}_n\) are regarded as identical (\(\ket{u}=\ket{v}\)) if they have the same global phase, i.e., there exists a real number \(\theta\) such that \(\ket{u} = e^{i\theta}\ket{v}\); otherwise, they are considered distinct (\(\ket{u}\neq\ket{v}\)). The cardinality \(c\) of a QLS($n$) is the number of distinct (up to a global phase) vectors in the array. Clearly the cardinality \(c\) of a QLS($n$) satisfies that \(n \leq c \leq n^2\). Furthermore, in this paper, by a QLS($n$) of maximum cardinality, denoted by MCQLS($n$), we mean that its cardinality is \(c = n^2\); i.e., it is of full cardinality.

The cardinality problem of quantum Latin squares has further attracted research interest. During the 2025-2026 period, new results continue to appear.
 Zhang et al. \cite{Cao-July} determined the existence of  MCQLS$(n)$ with $n \geq 9\times 77^4+4$.  Shortly after their results, Zhang et al. \cite{cite16}  constructed QLS(4m)s with
all possible cardinalities for $m \geq 2$.
Zhang and Cao \cite{cite18}  further confirmed that MCQLS$(n)$ exist for all \(n \geq 4\) with  $11$ possible exceptions.   Zang et al.\cite{cite19,cite19-2}  completely resolved the existence of QLS($n$) with maximum cardinality for $n \geq 4$, and also explored some possible cardinality range of a
QLS$(n)$ for any $n\geq 4$.
Then Zhang and Ji~\cite{cite17} and Xu~\cite{Xu26}  studied the construction of QLS($6m$) with all possible cardinalities.
Some works on quantum Latin squares (e.g., Refs.~\cite{Cao-July} and~\cite{cite18}) involve the concept of a transversal, which has always been a very important concept in classical Latin square theory.

A transversal of a Latin square of order \(n\) is a set of \(n\) cells, no two in the same row or column, containing exactly one occurrence of each symbol from the symbol set. A Latin square is called an idempotent Latin square if its main diagonal is a transversal, and a diagonal Latin square, denoted by $\mathrm{DLS}(n)$, if both its main diagonal and its anti-diagonal are transversals.
Furthermore, a Latin square \(\mathrm{LS}(n)\) is called a pandiagonal Latin square, denoted \(\mathrm{PLS}(n)\), if its main diagonal, anti-diagonal, and all broken diagonals in both directions are each a  transversal. Two transversals in a LS$(n)$ are called disjoint if they have no overlapping cells. \(\mathrm{PLS}(n)\) is sometimes also called a Knut Vik design (see Refs.~\cite{hedayat1977complete, Bell07} for example).
After diagonal Latin squares, the existence of Latin squares with disjoint transversals is also an interesting research problem. For example, if a Latin square of order \(n\) has \(n\) disjoint transversals if and only if two   mutually orthogonal Latin squares exist. Here, 
Two Latin squares $L_1 = (a_{ij})$ and $L_2 = (b_{ij})$ of order $n$ are said to be orthogonal if the $n^2$ ordered pairs $(a_{ij}, b_{ij})$ are all distinct.

For the quantum Latin square version, a transversal in a QLS$(n)$ refers to a set of $n$ cells, no two in the same row or column, the vectors in these cells forming an orthonormal  basis of $\mathcal{H}_n$. Two transversals in a LS$(n)$ are called disjoint if they have no overlapping cells.
A \(\mathrm{QLS}(n)\) is called an idempotent quantum Latin square if its main diagonal is a transversal.
A \(\mathrm{QLS}(n)\) is called a diagonal quantum Latin square, denoted by \(\mathrm{DQLS}(n)\) , if both its main diagonal and its anti-diagonal are transversals.
A \(\mathrm{QLS}(n)\) is called a pandiagonal quantum Latin square, denoted as \(\mathrm{PQLS}(n)\), if its main diagonal, anti-diagonal, and all broken diagonals in both directions are each a transversal. In the context of cardinality, a \(\mathrm{DQLS}(n)\) or a \(\mathrm{PQLS}(n)\) of maximum cardinality is denoted by \(\mathrm{MCDQLS}(n)\) or \(\mathrm{MCPQLS}(n)\), respectively.

For the study of quantum Latin squares and even of quantum theory, many references can be consulted, e.g., \cite{Paczos21, rather2022dual, rather2022thirty, rather2023absolutely, reuttera2019biunitary, zang2022further, zang2021quantum, zang2023quantum, zang2022mutually, zang2019uniform, zuo2021entanglement, zyczkowski2023}, and so on.
However,
this paper focuses on the existence and constructions of MCDQLS$(n)$, which need be based on the existence and constructions of idempotent  MCQLS$(n)$,  and also implies an existence result for MCPQLSs. During the exploration, we will also pay attention to the constructions of pandiagonal quantum Latin squares.
In fact, we attempt to carry over some results from the classical theory of Latin squares to the quantum Latin square version in a parallel manner. At the end of this section, we list the following known results as the foundation of this study.

\begin{theorem}\label{DLS-existence}\cite{cite21}
\begin{enumerate}
    \item[$(1)$] For any positive integer $n$ with $n\neq 2$, there exists an idempotent $\mathrm{LS}(n)$. 
    \item[$(2)$]  For any positive integer $n$ with $n\neq 2,3$, there exists a $\mathrm{DLS}(n)$. 
    \end{enumerate}
\end{theorem}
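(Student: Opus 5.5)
For part (1) I would first reduce the problem to finding a transversal. If an $\mathrm{LS}(n)$ has a transversal $T$, permute the columns so that the cell of $T$ in row $i$ moves to column $i$. Then $T$ lies on the main diagonal. Renaming the symbols so that the diagonal cell $(i,i)$ holds $i$ gives an idempotent $\mathrm{LS}(n)$. The case $n=1$ is trivial, so only $n\ge 3$ needs work.

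For odd $n$ there is a direct formula: $L(i,j)=\frac{n+1}{2}(i+j) \bmod n$ is a Latin square because $\frac{n+1}{2}$ is a unit in $\mathbb{Z}_n$, and its diagonal entries are $L(i,i)=i$.

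For even $n\ge 4$ I would use prolongation from the odd order $m=n-1\ge 3$. Take the cyclic square $A(i,j)=i+j \bmod m$. For each $c\in\mathbb{Z}_m$ the cells $\{(i,i+c)\}$ form a transversal, since their symbols are $2i+c$ and $2$ is invertible modulo $m$. Pick two disjoint transversals $T_1$ (with $c=0$) and $T_2$ (with $c=1$), and build an $\mathrm{LS}(m+1)$ as follows:
\begin{enumerate}
\item replace each symbol on $T_1$ by a new symbol $\infty$;
\item in the new row $m$, put in column $j$ the symbol that was displaced from the $T_1$-cell of column $j$;
\item in the new column $m$, put in row $i$ the symbol that was displaced from the $T_1$-cell of row $i$;
\item put $\infty$ in the corner cell $(m,m)$.
\end{enumerate}
Routine checking shows this is a Latin square. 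Moreover, $T_2\cup\{(m,m)\}$ is a transversal: $T_2$ is untouched by the prolongation and carries all old symbols, and the corner carries $\infty$. The reduction above then finishes part (1).

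For part (2) I would again start with a linear construction. Set $L(i,j)=2i+j \bmod n$. Its rows and columns are permutations when $n$ is odd. The main diagonal has symbols $3i$, which are all distinct when $\gcd(n,3)=1$. On the anti-diagonal $j=n-1-i$ the symbols are $i-1$, again all distinct. So this settles every $n$ with $\gcd(n,6)=1$.

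Next I would use the direct product. If $A$ is a $\mathrm{DLS}(a)$ and $B$ is a $\mathrm{DLS}(b)$, index the rows of $A\otimes B$ by pairs $(i_1,i_2)$ in lexicographic order, and similarly the columns. Then the main diagonal of $A\otimes B$ is the product of the two main diagonals, and its anti-diagonal is the product of the two anti-diagonals, because reversal in lexicographic order reverses both coordinates. Hence $A\otimes B$ is a $\mathrm{DLS}(ab)$. This reduces part (2) to a finite list of seed orders together with factorizations avoiding $2$ and $3$ as factors. The seeds would be $4,5,7$ and explicit $\mathrm{DLS}(6)$, $\mathrm{DLS}(8)$ and $\mathrm{DLS}(9)$, which I would write down by hand or take from the literature.

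The main obstacle is orders of the form $2p$ and $3p$ with $p$ prime, and more generally every $n$ that does not factor into admissible seeds (for example $n=10,14,15,\dots$). No cyclic-group construction can help for $n\equiv 2\pmod 4$, since cyclic groups of such orders have no complete mappings. For these orders I would first try a non-group construction: prolongation of a $\mathrm{DLS}(n-1)$ (or of an order-$(n-1)$ square with suitable structure) along a transversal chosen to avoid both diagonals, arranged so that the corner $\infty$ together with the two diagonals yields transversals. I would check this on the main diagonal and the anti-diagonal separately. Failing that, I would use a singular direct product type construction for the residual orders.
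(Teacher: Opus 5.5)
The paper does not prove this theorem; it quotes it from the literature, so your proposal can only be judged on its own merits. Part (1) is a complete and correct proof. The transversal-to-diagonal reduction works, the square $\frac{n+1}{2}(i+j)$ handles odd $n$, and the prolongation of the cyclic square of order $m=n-1$ along $T_1=\{(i,i)\}$, with $T_2=\{(i,i+1)\}$ plus the corner surviving as a transversal, checks out.

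Part (2), however, has a genuine gap. The square $2i+j$ and the direct product only give a $\mathrm{DLS}(n)$ for orders that factor into parts coprime to $6$ and your seeds $4,6,8,9$. This is \emph{not} a reduction to finitely many cases. Every $n=2p$ or $n=3p$ with $p\ge 5$ prime has only proper factorizations that use a factor $2$ or $3$, so infinitely many orders remain. Even $12=4\cdot 3=6\cdot 2$, $18$ and $27$ are not reached from your seed list. For exactly these orders you give a plan rather than a construction, so (2) is unproved.

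Moreover, the plan as stated fails for the even orders $2p$. To keep the old main diagonal on the new one, the new row and column must be inserted at the same index. To keep the old anti-diagonal (minus its center) on the new one, that index must be the middle one. The new anti-diagonal then consists of the old anti-diagonal without its center cell, plus one cell of the new row and one of the new column. Those two cells would have to carry the center symbol and $\infty$, but prolongation puts $\infty$ only on $T$ and at the corner.

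For odd orders the middle insertion does work: each new diagonal is an old diagonal plus the corner $\infty$. So $\mathrm{DLS}(3p)$ follows from a $\mathrm{DLS}(3p-1)$ having a third transversal disjoint from both diagonals. You would still have to supply that square; the paper's Lemma~\ref{DLS} provides one only when $4\mid 3p-1$, i.e.\ $p\equiv 3 \pmod 4$.

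What is missing is an actual construction for the orders $2p$ and the remaining $3p$ (and the small orders $12$, $18$, $27$). One option is a classical singular direct product that preserves both diagonals (the Latin-square analogue of Theorem~\ref{MCDQLS-SDPC}), run as an induction from small seeds with the required extra disjoint transversals. Another is the explicit direct constructions of doubly diagonal Latin squares from the literature.
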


\begin{lemma}\cite{cite21} \label{lem2.2.5} For any positive integer \(n\) with \(n \neq 2, 6\), there exists an \(\mathrm{LS}(n)\) that has \(n\) pairwise disjoint transversals, one of which is the main diagonal. 
\end{lemma}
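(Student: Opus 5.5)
The statement is essentially a restatement of the existence of a pair of orthogonal Latin squares of order $n$, followed by a normalization step. I therefore plan to deduce it from the classical theorem of Bose, Shrikhande and Parker: for every positive integer $n\neq 2,6$ there exist two orthogonal Latin squares of order $n$. This theorem is the main input and the only genuinely hard step; it is the refutation of Euler's conjecture. I would quote it rather than reprove it. The case $n=1$ is trivial in any event.

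Let $L_1=(a_{ij})$ and $L_2=(b_{ij})$ be orthogonal Latin squares of order $n$. For each symbol $k$, set $T_k=\{(i,j): b_{ij}=k\}$.
\begin{itemize}
\item Because $L_2$ is a Latin square, $T_k$ consists of $n$ cells with exactly one in each row and one in each column.
\item By orthogonality, the $n$ pairs $(a_{ij},k)$ with $(i,j)\in T_k$ are distinct. Hence the entries of $L_1$ on $T_k$ are $n$ distinct symbols, so $T_k$ is a transversal of $L_1$.
\item The sets $T_0,\dots,T_{n-1}$ partition the $n^2$ cells, so they are pairwise disjoint.
\end{itemize}

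It remains to make one of these transversals the main diagonal. Write $T_0=\{(i,\sigma(i)) : 0\le i\le n-1\}$ for a permutation $\sigma$. Define $L(i,j)=a_{i,\sigma(j)}$, which permutes the columns of $L_1$.
\begin{itemize}
\item A column permutation preserves the Latin property.
\item It maps any set of cells meeting each row and column once to another such set, with the same multiset of symbols. So the images $T_k'=\{(i,\sigma^{-1}(j)) : (i,j)\in T_k\}$ are $n$ pairwise disjoint transversals of $L$.
\item Finally, $T_0'=\{(i,i)\}$ is exactly the main diagonal.
\end{itemize}

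Apart from invoking the Bose--Shrikhande--Parker theorem, every step is a routine verification.
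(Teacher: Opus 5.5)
Your proof is correct and follows the route behind the cited result: the paper gives no proof of its own, citing \cite{cite21} and noting in its introduction that $n$ disjoint transversals correspond to an orthogonal mate. Combining that correspondence with the Bose--Shrikhande--Parker theorem, and then applying your column permutation $L(i,j)=a_{i,\sigma(j)}$ to move $T_0$ onto the main diagonal, is exactly what is needed.
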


\begin{lemma} \cite {hedayat1977complete} \label{lem-pls}
    A $\mathrm{PLS}(n)$ exists if and only if $\gcd(n,6)=1$.

\end{lemma}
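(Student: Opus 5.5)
The plan is to prove the two directions separately. Sufficiency will come from an explicit cyclic (linear) construction over $\mathbb{Z}_n$. Necessity will come from counting arguments applied to the positions of a single symbol.

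\emph{Sufficiency.} Assume $\gcd(n,6)=1$, index rows and columns by $\mathbb{Z}_n$, and define $L(i,j)=2i+j \pmod n$.
\begin{itemize}
\item Row $i$ contains the entries $j\mapsto 2i+j$, which is a bijection. So every row is a permutation.
\item Column $j$ contains the entries $i\mapsto 2i+j$, which is a bijection because $2$ is a unit mod $n$.
\item A broken diagonal in the main direction is $\{(i,i+c)\}$. Its entries are $3i+c$, which run through all of $\mathbb{Z}_n$ because $3$ is a unit.
\item A broken diagonal in the anti direction is $\{(i,c-i)\}$. Its entries are $i+c$, which again run through all of $\mathbb{Z}_n$.
\end{itemize}
Hence $L$ is a $\mathrm{PLS}(n)$. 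The case $n=1$ is trivial.

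\emph{Necessity.} Suppose a $\mathrm{PLS}(n)$ exists and fix one symbol $s$. It occurs once in each row and once in each column, so its cells are $(i,\sigma(i))$ for some permutation $\sigma$ of $\mathbb{Z}_n$. Each of the $n$ broken diagonals $j-i\equiv c$ contains $s$ exactly once, so $i\mapsto\sigma(i)-i$ is also a permutation of $\mathbb{Z}_n$. By the same reasoning for the anti direction, $i\mapsto \sigma(i)+i$ is a permutation as well.

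\emph{The even case.} Sum $\sigma(i)-i$ over $i$. On one hand this equals $\sum\sigma(i)-\sum i\equiv 0$. On the other hand, since the map is a permutation, it equals $\sum_{k=0}^{n-1}k=n(n-1)/2$. For $n$ even we have $n(n-1)/2\equiv n/2\not\equiv 0 \pmod n$, which is a contradiction. Hence $n$ is odd.

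\emph{The case $3\mid n$.} Let $S=\sum_{k=0}^{n-1}k^2$. Summing squares gives
\[
\sum_i(\sigma(i)+i)^2+\sum_i(\sigma(i)-i)^2 = 2\sum_i\sigma(i)^2+2\sum_i i^2 = 4S .
\]
Since both maps are permutations, the left side is $\equiv 2S \pmod n$. Therefore $2S\equiv 0\pmod n$.

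Now write $n=3m$ with $n$ odd. Then
\[
2S=\frac{(n-1)n(2n-1)}{3}=m(3m-1)(6m-1),
\]
and $(3m-1)(6m-1)\equiv 1 \pmod 3$. Hence $2S\equiv m \not\equiv 0\pmod{3m}$, a contradiction. So $3\nmid n$, and together with the even case this gives $\gcd(n,6)=1$.

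The main obstacle is the case $3\mid n$. The linear sum argument does not suffice there, and one must find the quadratic identity above, which uses both diagonal directions at once. The remaining steps are routine congruence bookkeeping.
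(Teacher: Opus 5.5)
Your proof is correct and complete. The paper does not prove this lemma; it cites it from Hedayat (1977) as background, so there is no in-paper argument to compare against. You give the classical self-contained proof. Sufficiency comes from the cyclic square $L(i,j)=2i+j$. Necessity comes from P\'olya-style toroidal-queens counting: the linear sum rules out $2\mid n$, and the sum-of-squares identity, which needs both diagonal directions, rules out $3\mid n$. A small remark: the oddness of $n$ is not needed in that last step, since $(3m-1)(6m-1)\equiv 1 \pmod 3$ for every $m$, so $2S\equiv m \pmod{3m}$ in general.

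Compared with a bare citation, your route connects the lemma to the paper's own machinery. The position map $\sigma$ of a fixed symbol is exactly a strong complete mapping of $\mathbb{Z}_n$, since $\sigma$, $\sigma-\mathrm{id}$ and $\sigma+\mathrm{id}$ are all permutations. Your necessity argument uses nothing beyond this, so it shows that $\mathbb{Z}_v$ admits a strong complete mapping only when $\gcd(v,6)=1$. This explains why the complete-mapping construction of Section~2.2 (and Corollary~\ref{MCPDMQLS}) can only reach orders coprime to $6$. It also explains why orders divisible by $2$ or $3$ must be handled by the rectangle-based and recursive constructions.

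Conversely, feeding the linear mapping $\mu(x)=2x$ into that construction gives $\ket{\phi_{i,j}}=f_{2i+j}$. That is exactly your square with each symbol $k$ replaced by the Fourier vector $f_k$, so its cardinality is only $v$. For any affine $\mu$ the expression in condition $(*)$ vanishes identically, which is why the paper has to use nonlinear strong complete mappings such as $x^{(p+1)/2}+ax$.
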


\begin{theorem}\cite{cite19}\label{MCQLS(n)}
    For any integer $n \geq 4$, there exists an $\mathrm{MCQLS}(n)$.
\end{theorem}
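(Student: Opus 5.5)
This theorem is cited from \cite{cite19}, so the paper relies on it rather than reproving it. If I were to prove it, I would reduce everything to a small number of direct constructions glued together by a product-type recursion.

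\textbf{Direct constructions.} The first step is a direct construction of full-cardinality squares of small orders. Start from a classical $\mathrm{LS}(n)$ with many disjoint transversals, for instance via Lemma~\ref{lem2.2.5} for $n\neq 2,6$. Partition its cells into structured blocks, say pairs of rows, or the $2\times 2$ subsquares in an $\mathrm{LS}(4m)$ built from $\mathbb{Z}_2\times\mathbb{Z}_2$. Inside each block, replace the pair of computational basis vectors $\ket{a},\ket{b}$ occupying a $2\times2$ configuration by a rotated pair
\[
\cos\theta\ket{a}+\sin\theta\ket{b},\quad -\sin\theta\ket{a}+\cos\theta\ket{b}.
\]
Each block gets its own generic angle $\theta$. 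This keeps rows and columns orthonormal, since the rotation acts within the span of $\ket{a},\ket{b}$, which is the same span occupied in the affected rows and columns. Choosing the angles generically and independently (or using complex phases) makes all $n^2$ vectors pairwise non-parallel, provided every cell is touched by at least one nontrivial rotation. Iterating such rotations across different pairings, in the spirit of the row-quantum Latin rectangles mentioned in the abstract, raises the cardinality to $n^2$.

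\textbf{Recursion.} The second step is a product construction. Given an $\mathrm{MCQLS}(m)$ $A=(\ket{a_{ij}})$ and an $\mathrm{MCQLS}(k)$ $B=(\ket{b_{st}})$, the tensor square with entries $\ket{a_{ij}}\otimes\ket{b_{st}}$ is a $\mathrm{QLS}(mk)$. Its cardinality is $m^2k^2$, because the tensor products $u\otimes v$ with $u,v$ pairwise non-parallel are themselves pairwise non-parallel. So full cardinality is multiplicative. This reduces the problem to prime powers, and in fact to the orders $4,5,6,7,8,9$ together with some additive or singular direct product constructions for the remaining orders, such as primes.

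\textbf{Main obstacle.} Since products cannot reach primes, the main difficulty is handling arbitrary $n\ge 4$, primes included. For these I would rely on the direct construction: in the cyclic Latin square over $\mathbb{Z}_n$, I would pair up symbols and apply two-level rotations on $2\times 2$ intercalate-like structures, adjusting the structure when $n$ is odd. For odd $n$ the cyclic square has no intercalates, so I would first modify it to a Latin square rich in $2\times2$ subsquares, or use $3\times3$ unitary blocks on a triple of symbols. Verifying that some unitary mixing touches every cell, while orthogonality is preserved, is the delicate point. The small orders such as $n=5,7$ may need explicit computer-found examples.
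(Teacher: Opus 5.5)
Your direct step fails, and everything rests on it. Your tensor-product step only combines two full-cardinality squares of orders at least $4$, since every $\mathrm{QLS}(2)$ has cardinality $2$ and every $\mathrm{QLS}(3)$ has cardinality $3$. So it never reaches primes, nor orders $2p$ or $3p$, and the base cases form an infinite family, not prime powers or a few small orders.

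Now take any $2\times 2$ block in rows $r_1,r_2$ and columns $c_1,c_2$ whose entries lie in a $2$-dimensional subspace $S$, as in your intercalate rotations. The entries in cells $(r_1,c_2)$ and $(r_2,c_1)$ are both orthogonal, inside $S$, to the entry in $(r_1,c_1)$, so they are equal up to a global phase. Likewise the entries in $(r_1,c_1)$ and $(r_2,c_2)$ coincide. Each such block therefore contains exactly two distinct vectors, whatever angles or phases you pick. Your claim that generic angles make all $n^2$ entries pairwise non-parallel is false; a square tiled by modified intercalates has cardinality at most $n^2/2$.

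Iterating does not help, because the block used in the last move still holds only two distinct vectors at the end. Your $3\times 3$ blocks on a triple of symbols have the same defect: such a block is a $\mathrm{QLS}(3)$ on a $3$-dimensional subspace, so it contains only $3$ distinct vectors. For prime orders you are left with no argument, and computer searches cannot cover infinitely many primes.

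The missing idea is the uniform algebraic construction of Zang et al.\ that the paper recalls in Section~2.2 and in the proof of the lemma giving infinitely many disjoint $\mathrm{MCQLS}(m)$s. Let $\omega=e^{2\pi\mathrm{i}/n}$ and let $\mu$ be the transposition of $1$ and $2$ in $\mathbb{Z}_n$. Set $\ket{a_{i,j}}=\frac{1}{\sqrt n}\left(\omega^{tj+i\mu(t)}\right)_{t=0}^{n-1}=\frac{1}{\sqrt n}(1,\omega^{2i+j},\omega^{i+2j},\omega^{3(i+j)},\dots,\omega^{(n-1)(i+j)})^{T}$.

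Rows are orthonormal because $\sum_t\omega^{t(j'-j)}=0$ for $j\neq j'$. Columns are orthonormal because $\mu$ is a permutation, so $\sum_t\omega^{(i'-i)\mu(t)}=0$ for $i\neq i'$.

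Every entry has first coordinate $1/\sqrt n$, so two entries agree up to phase only if all coordinates agree. With $x=j-j'$ and $y=i-i'$, this gives $x+2y\equiv 2x+y\equiv 0 \pmod n$, hence $3y\equiv 0$. For $n\geq 5$ the coordinate $t=4$ gives $4(x+y)\equiv -4y\equiv 0$, and together these force $y\equiv x\equiv 0$. For $n=4$, $3y\equiv 0$ already suffices.

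One formula thus handles every $n\geq 4$, primes included, with no recursion. The point is that the entries are global, full-support vectors, not local perturbations of a classical square.
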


\begin{lemma}\cite{cite20,Paczos21} \label{lem1.2.2}
    If $n \in \{2, 3, 4, 5\}$, there does not exist a  non-classical idempotent  $\mathrm{DQLS}(n)$. 
\end{lemma}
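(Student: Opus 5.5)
Throughout, I read ``classical'' as being equivalent to a classical square after a global unitary change of basis; the exhaustive analyses for $n=4,5$ would be checked against that reading. The plan is to normalize the main diagonal and then use orthogonality to confine every off-diagonal entry to a small subspace spanned by diagonal vectors. This is likely close to how \cite{cite20,Paczos21} argue.

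Let $Q=(\ket{v_{ij}})$ be an idempotent $\mathrm{DQLS}(n)$. The main diagonal is an orthonormal basis, so after a unitary change of basis I may assume $\ket{v_{ii}}=\ket{i}$ for every $i$. For $i\neq j$, the entry $\ket{v_{ij}}$ shares row $i$ with $\ket{i}$ and column $j$ with $\ket{j}$. Hence $\ket{v_{ij}}\in \operatorname{span}\{\ket{k}: k\neq i,j\}$, a space of dimension $n-2$. The goal is to show that every $\ket{v_{ij}}$ equals some $\ket{k}$ up to phase. Then $Q$ is classical.

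For $n=2$, an idempotent $\mathrm{LS}(2)$ does not exist (Theorem~\ref{DLS-existence}). In the quantum setting, $\ket{v_{01}}$ would lie in a $0$-dimensional space, which is impossible. So no idempotent $\mathrm{QLS}(2)$ exists at all, and the statement is vacuous. For $n=3$, each off-diagonal entry lies in a $1$-dimensional space, so $\ket{v_{ij}}=\ket{k}$ with $\{i,j,k\}=\{0,1,2\}$. Thus $Q$ is classical. Alternatively, both small cases follow from the result of \cite{Paczos21} that every $\mathrm{QLS}(2)$ and $\mathrm{QLS}(3)$ has cardinality $n$.

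For $n=4$, each off-diagonal entry lies in a $2$-dimensional space $W_{ij}=\operatorname{span}\{\ket{k},\ket{l}\}$, where $\{i,j,k,l\}=\{0,1,2,3\}$. Row $i$ contains three off-diagonal vectors which, together with $\ket{i}$, form a basis. So these three vectors form an orthonormal basis of $\ket{i}^\perp$, and similarly for columns. I would use the anti-diagonal cells $(0,3),(1,2),(2,1),(3,0)$. Their vectors lie in $W_{03}=W_{30}=\operatorname{span}\{\ket1,\ket2\}$ and $W_{12}=W_{21}=\operatorname{span}\{\ket0,\ket3\}$, and they must be pairwise orthogonal. This forces $\ket{v_{03}}\perp\ket{v_{30}}$ inside $\operatorname{span}\{\ket1,\ket2\}$, and likewise for the other pair.

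Write each entry as $a\ket{k}+b\ket{l}$. The row and column orthogonality conditions then become a finite system of equations. I would show that any entry with both coefficients nonzero propagates, via rows and columns, to a contradiction with the anti-diagonal constraint. The idea is that a non-basis entry in $W_{ij}$ forces its two row-mates (or column-mates) into a rotated basis that is incompatible with the $W$'s of the neighbouring cells. This reduces to a case analysis over the few possible classical patterns of an idempotent $\mathrm{DLS}(4)$, checked up to symmetry.

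For $n=5$, the subspaces $W_{ij}$ are $3$-dimensional, so the argument is genuinely harder. This is the step I expect to be the main obstacle. I would first try to show that some row must contain a basis vector $\ket{k}$ off the diagonal. I would then peel off that symbol and reduce to an order-$4$-like configuration. The anti-diagonal constraint is used together with the fact that the centre cell $(2,2)$ lies on both diagonals, so its vector $\ket{2}$ is orthogonal to every other vector on both diagonals. If this reduction stalls, I would fall back on an exhaustive, computer-assisted style case split on the support pattern of each row, that is, which basis vectors appear with nonzero coefficient. I would discard supports that violate the dimension counts imposed by the row, column, main-diagonal and anti-diagonal constraints.
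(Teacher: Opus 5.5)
The paper gives no proof of this lemma; it is quoted from the cited references, so your plan has to stand on its own. Your normalization and the cases $n=2,3$ are correct, and reading ``classical'' as classical up to a global unitary is the right interpretation.

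\textbf{The case $n=4$.} You stop one step short, and neither the anti-diagonal nor a case analysis over $\mathrm{DLS}(4)$ patterns is needed. Fix row $i$. Two of its off-diagonal vectors $\ket{v_{ij}},\ket{v_{il}}$ can both be nonzero only in the single coordinate $m\notin\{i,j,l\}$, so orthogonality forces one of those two components to vanish. Suppose $\ket{v_{ij}}$ had two nonzero components. Then both other vectors of row $i$ would be multiples of $\ket{j}$, which is impossible. Hence every idempotent $\mathrm{QLS}(4)$ is classical after normalization.

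\textbf{The case $n=5$: the genuine gap.} This case is left as a plan, and neither route works as described. Row constraints alone cannot produce an off-diagonal basis vector. Row $i$ corresponds to a $4\times 4$ unitary with zero diagonal in the coordinates $k\neq i$, and such unitaries need not be monomial. One example puts $(\ket{b}\pm\ket{c})/\sqrt2$ in columns $a,d$ and $(\ket{a}\pm\ket{d})/\sqrt2$ in columns $b,c$. Another is $1/\sqrt3$ times a skew conference matrix of order $4$, which has no zero off the diagonal. Removing a symbol does not leave an order-$4$ configuration when other entries are superpositions involving it. A support-pattern split driven by dimension counts does not exclude these patterns either. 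What excludes them are the two-term orthogonality relations combined with a global argument.

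\textbf{A complete argument for $n=5$.} It builds on your centre-cell remark.

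(a) Let $U$ be a $4\times4$ unitary with zero diagonal and a zero off the diagonal, say $U_{12}=0$. Rows $1,3,4$ give $U_{14}U_{34}=U_{13}U_{43}=0$. One then checks that $U$ splits into blocks of sizes $1$ and $2$, so every row has at most two nonzero entries.

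(b) Since $\ket{v_{22}}=\ket{2}$ lies on the anti-diagonal, we have $\ket{v_{04}},\ket{v_{13}},\ket{v_{31}},\ket{v_{40}}\perp\ket{2}$. So the unitaries of rows $0,1,3,4$ and of columns $0,1,3,4$ each have an off-diagonal zero. By (a), every off-diagonal entry has support of size at most $2$; the entries of row $2$ are covered through their columns.

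(c) Take a cell with support exactly $K=\{k_1,k_2\}$. By (a) it sits in a $2\times2$ block, so each row and each column contains $0$ or $2$ such cells. All their row and column indices lie in $T=\mathbb{Z}_5\setminus K$. In the bipartite row/column graph these cells form even cycles. A $4$-cycle would need disjoint row and column pairs inside the $3$-set $T$, which is impossible. So they form a $6$-cycle avoiding the diagonal, namely the six off-diagonal cells of $T\times T$.

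Now take $x\in T$. Row $x$ forces $\ket{v_{xk_1}}\in\operatorname{span}\{\ket{t}:t\in T\setminus\{x\}\}$. These three orthonormal vectors of column $k_1$ therefore span $\operatorname{span}\{\ket{t}:t\in T\}$. Together with $\ket{v_{k_1k_1}}=\ket{k_1}$, this forces $\ket{v_{k_2k_1}}$ to be a multiple of $\ket{k_2}$. That contradicts its orthogonality to $\ket{v_{k_2k_2}}=\ket{k_2}$. Hence every entry is a basis vector up to phase.
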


\begin{lemma}\cite{Cao-July} \label{lem1.2.4} 
    For any positive interge $n$, there exists an $\mathrm{MCQLS}(n^4)$ containing $n^4$ disjoint transversals. 
\end{lemma}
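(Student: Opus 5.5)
The plan is a two-level "controlled tensor" construction. Put $m=n^2$, so that $n^4=m^2$, and identify $\mathcal{H}_{n^4}=\mathcal{H}_m\otimes\mathcal{H}_m$. The case $n=1$ is trivial, so assume $n\ge 2$. Then $m\ge 4$ and $m\ne 6$, so Lemma~\ref{lem2.2.5} gives a classical $\mathrm{LS}(m)$ $A=(A(x_1,y_1))$ on $\mathbb{Z}_m$ with $m$ pairwise disjoint transversals $T^A_1,\dots,T^A_m$. Index the rows and columns of the big square by pairs $x=(x_1,x_2)$, $y=(y_1,y_2)$, and call $c=(x_1,y_1)$ the block of the cell. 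The entry I want is
\[
\ket{\psi_{x,y}}=\ket{A(x_1,y_1)}\otimes V_{(x_1,y_1)}\ket{\beta(x_2,y_2)} ,
\]
where $V_c$, $c\in\mathbb{Z}_m^2$, are unitaries on $\mathcal{H}_m$ to be chosen, and $\beta$ is an order-$m$ square of unit vectors to be chosen.

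First, check the QLS and transversal properties. Take any set $S$ of cells forming a row, a column, or a product transversal $T^A_i\times T^B_j$, where $T^B_j$ is a transversal of $\beta$. If two cells of $S$ have orthogonal first factors, they are orthogonal. If they have the same first factor $\ket{a}$, then they lie in the same block $c$; this uses that $A$ is a Latin square for rows and columns, and that $T^A_i$ is a transversal of $A$ for product transversals. So within $S$ the first factor fixes $c$, and the second factors are $V_c$ applied to the $\beta$-entries of a row, column, or transversal of $\beta$. These are orthonormal provided $\beta$ is a QLS$(m)$ with $m$ disjoint transversals $T^B_1,\dots,T^B_m$. The $m\cdot m=n^4$ product transversals $T^A_i\times T^B_j$ are pairwise disjoint and each has $n^4$ cells. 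Hence the whole array is a $\mathrm{QLS}(n^4)$ with $n^4$ disjoint transversals.

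Second, handle the cardinality. Two cells can only coincide up to phase if they have the same first factor, and then the question is whether the vectors $V_c\ket{\beta(x_2,y_2)}$ are distinct. Across different blocks $c\ne c'$, I would choose the $V_c$ generically, for instance Haar-random or explicit Fourier-type unitaries with distinct phase twists. Then no vector of $V_c\beta$ is proportional to one of $V_{c'}\beta$, since each coincidence is a proper algebraic condition on $(V_c,V_{c'})$. Within a single block, distinctness is exactly the requirement that $\beta$ has full cardinality $m^2$.

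This reduces the lemma to building an order-$m=n^2$ square $\beta$ that is an MCQLS$(n^2)$ with $n^2$ disjoint transversals. That is the main obstacle. Iterating the same construction at order $n^2=n\cdot n$ using classical squares of order $n$ gives all the transversal properties, but only about $n^3$ distinct vectors, so full cardinality is lost. My first attempt to fix this would enrich the twist: let the inner unitary depend on $c$ together with the $\beta$-block coordinates, and spend the extra freedom on making every cell distinct. The orthogonality check above only needs the twist to be constant on cells sharing a first-factor symbol inside a row, column, or transversal, so such dependence may be allowed. Alternatively, I would exploit that $n^4$ has four "levels" $n\cdot n\cdot n\cdot n$ and place generic twists at two different levels, so that their combined indices range over all $n^8$ cells. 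Verifying that generic choices avoid all coincidences should then be a finite union of proper algebraic conditions.
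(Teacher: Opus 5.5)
Your reduction is correct as far as it goes. With $A$ an $\mathrm{LS}(n^2)$ having $n^2$ disjoint transversals (Lemma~\ref{lem2.2.5}), generic block twists $V_c$, and $\beta$ an $\mathrm{MCQLS}(n^2)$ with $n^2$ disjoint transversals, the array $\ket{A(x_1,y_1)}\otimes V_c\ket{\beta(x_2,y_2)}$ is indeed an $\mathrm{MCQLS}(n^4)$ with the $n^4$ disjoint transversals $T^A_i\times T^B_j$. The trouble is that the argument stops where the real difficulty lies. You never construct $\beta$, and $\beta$ is just the same statement one level down. It is the paper's Lemma~\ref{MCIDQLS(v^2)}, which the paper proves only for $n\ge 4$, and proves using exactly the idea you are missing. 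For $n=2$ you would need an $\mathrm{MCQLS}(4)$ with $4$ disjoint transversals. Nothing supplies this. Moreover, the paper's conclusion lists orders $2,3,4,5$ as admitting no idempotent $\mathrm{MCQLS}$. Since a column permutation moves any transversal onto the diagonal, that would rule out even a single transversal.

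The repairs you sketch cannot close this gap.
\begin{itemize}
\item \emph{Genericity does not help.} Every cell of a block $c$ carries the same classical factor $\ket{a}$. Two of its cells give $\ket{a}\otimes V_c\ket{\beta(p)}$ and $\ket{a}\otimes V_c\ket{\beta(p')}$, which coincide iff $\ket{\beta(p)}=\ket{\beta(p')}$, whatever $V_c$ is. These coincidences hold identically; they are not proper algebraic conditions that a generic choice avoids.
\item \emph{Varying the twist inside a block buys nothing.} Rows and columns of the block must stay orthonormal, so this only replaces $\beta$ by another $\mathrm{QLS}(n^2)$ $\beta_c$. That square again needs full cardinality and $n^2$ disjoint transversals.
\item \emph{The recursion cannot bottom out.} Recursing with classical outer factors always ends by requiring an $\mathrm{MCQLS}(n)$ with $n$ disjoint transversals. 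This is impossible for $n=2,3$, because every $\mathrm{QLS}(2)$ and $\mathrm{QLS}(3)$ has cardinality $2$ and $3$ respectively. Lemma~\ref{lem2.2.5} is also unavailable at $n=2,6$.
\end{itemize}

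The missing idea, which is the construction behind the cited proof (described after Lemma~\ref{state vector}), is to drop the classical outer factor altogether. Put $N=n^2$ and use the shifted product $\ket{w_{i,j,k,l}}=\ket{u_{i,j+k}}\otimes\ket{v_{j,i+l}}$, $i,j,k,l\in\mathbb{Z}_N$, with $U$ and $V$ both $\mathrm{MCQLS}(N)$; you may even take $U=V$.
\begin{itemize}
\item Such squares exist for every $n\ge 2$: apply the same product to two $n\times n$ row-quantum Latin rectangles of full cardinality $n^2$ (Lemma~\ref{construction-row-quantum Latin rectangle}).
\item Lemma~\ref{state vector} gives cardinality $N^2\cdot N^2=(n^4)^2$, which is the maximum.
\item For $r,s\in\mathbb{Z}_N$, the $N^2=n^4$ sets $T_{rs}=\{\ket{u_{i,r}}\otimes\ket{v_{j,s}}: i,j\in\mathbb{Z}_N\}$ occupy the cells $(iN+(r-j),\,jN+(s-i))$. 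They are pairwise disjoint transversals, because column $r$ of $U$ and column $s$ of $V$ are orthonormal.
\end{itemize}
The transversals come from the cyclic shifts, so neither $U$ nor $V$ needs any transversal at all. That is precisely what lets the argument avoid the small-order obstruction you ran into.
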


The rest of the paper is organized as follows. Section $2$ focuses on the direct constructions for MCQLS$(n)$ using row-quantum Latin rectangle and special complete
mapping. Section $3$  mainly introduces a singular direct product construction for MCDQLS$(n)$, and makes some preparations for the implementation of the recursive methods. Section $4$ resolves the existence of MCDQLS$(n)$.
Section~$5$ then summarizes our new results on the existence of MCQLS$(n)$, together with the results for idempotent MCQLS$(n)$ and MCPQLS$(n)$.

\section{Direct Constructions}

In this section, we present two types of direct construction for MCDQLS$(n)$: one based on the row-quantum Latin rectangle, and the other based on a special complete mapping.

\subsection{Row-quantum Latin rectangle}

An $m \times n$ row-quantum Latin rectangle is an array with $m$ rows and $n$
columns whose entries are unit vectors from $\mathcal{H}_n$ such that the entries in each row forms an
orthonormal basis.
If the vectors in the same column of an $m \times n$ row-quantum rectangle are mutually
orthogonal, then it is called an $m\times n$ quantum Latin rectangle. It is important to note that
such a rectangle must satisfy $m \leq n$. 
The
cardinality (number of distinct unit vectors up to a global phase) of such a rectangle satisfies $n \leq c \leq mn$. If \( c = mn \), then \( c \) is called the maximum.

Let $U = (\ket{u_{i,k}})$ and $V = (\ket{v_{j,l}})$ denote an $m \times n$ row-quantum Latin rectangle
with cardinality $c_1$ in $\mathcal{H}_n$ and $n \times m$ row-quantum Latin rectangle with cardinality $c_2$
in $\mathcal{H}_m$, respectively. Let $W$ be an array of order $mn$, which is divided into $mn$ blocks of
size $n \times m$.
Let $\ket{w_{i,j,k,l}}$ denote the element located in the $k$-th row and $l$-th column of the $(i,j)$-th block in $W$, and define it as the tensor product $\ket{w_{i,j,k,l}} = \ket{u_{i,j+k}}\otimes \ket{v_{j,i+l}}$, where $j+k$ is taken modulo $n$ and $i+l$ is taken modulo $m$. Then $W$ is a QLS$(mn)$ with cardinality \( c_1c_2 \).

\begin{lemma} \cite{Cao-July}\label{construction-row-quantum Latin rectangle}
If there exists an \( m \times n \) row-quantum Latin rectangle with cardinality \( c_1 \), and an \( n \times m \) row-quantum Latin rectangle with cardinality \( c_2 \), then there exists a \( QLS(mn) \) with cardinality \( c_1c_2 \).
\end{lemma}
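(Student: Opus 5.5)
We prove the lemma by verifying that the array $W$ defined just before the statement, $\ket{w_{i,j,k,l}} = \ket{u_{i,j+k}}\otimes\ket{v_{j,i+l}}$ with $0\le i<m$, $0\le j<n$, $0\le k<n$, $0\le l<m$, is a $\mathrm{QLS}(mn)$ with cardinality $c_1c_2$. There are three ingredients.

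\textbf{Unit vectors and indexing.} Each entry is a unit vector in $\mathcal{H}_n\otimes\mathcal{H}_m\cong\mathcal{H}_{mn}$, since the norm of a tensor product is the product of the norms. A row of $W$ is determined by a block-row index $i$ and an inner row index $k$. It consists of the entries with $j$ running over $\mathbb{Z}_n$ and $l$ over $\mathbb{Z}_m$. A column is determined by $j$ and $l$, with $i$ and $k$ varying.

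\textbf{Orthonormality of rows.} Fix $i$ and $k$, and use $\langle a\otimes b|c\otimes d\rangle=\langle a|c\rangle\langle b|d\rangle$. Take two entries $(j,l)\neq(j',l')$.
\begin{itemize}
\item If $j\neq j'$, then $j+k\neq j'+k \pmod n$. So $\ket{u_{i,j+k}}\perp\ket{u_{i,j'+k}}$, because row $i$ of $U$ is an orthonormal basis.
\item If $j=j'$ and $l\neq l'$, then $i+l\neq i+l' \pmod m$. So the second factors $\ket{v_{j,i+l}}$ and $\ket{v_{j,i+l'}}$ are orthogonal, because row $j$ of $V$ is an orthonormal basis.
\end{itemize}
Hence the row contains $mn$ pairwise orthogonal unit vectors, which form an orthonormal basis.

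\textbf{Orthonormality of columns.} Fix $j$ and $l$, and take $(i,k)\neq(i',k')$.
\begin{itemize}
\item If $i\neq i'$, then $i+l\neq i'+l \pmod m$. Both second factors lie in row $j$ of $V$, so they are orthogonal.
\item If $i=i'$ and $k\neq k'$, then $j+k\neq j+k' \pmod n$. Both first factors lie in row $i$ of $U$, so they are orthogonal.
\end{itemize}
This step is the key point: columns of $U$ and $V$ need not be orthogonal. The cyclic shifts $j+k$ and $i+l$ make every column comparison land inside a single row of $U$ or of $V$.

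\textbf{Cardinality.} As $(i,j,k,l)$ varies, for fixed $(i,j)$ the pair $(i,j+k)$ runs over row $i$ of $U$ and $(j,i+l)$ runs over row $j$ of $V$. Hence the map $(i,j,k,l)\mapsto\big((i,j+k),(j,i+l)\big)$ is a bijection onto (cells of $U$)$\times$(cells of $V$). Therefore the multiset of entries of $W$ is exactly $\{\ket{u}\otimes\ket{v}\}$ over all cell pairs.

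It remains to show that $\ket{u}\otimes\ket{v}=e^{i\theta}\ket{u'}\otimes\ket{v'}$ holds if and only if $\ket{u}=\ket{u'}$ and $\ket{v}=\ket{v'}$ up to phase. This follows because a nonzero product tensor determines its factors up to scalars (its rank-one matrix form $\ket{u}\bra{\bar v}$ determines the column and row spaces), and unit norm forces those scalars to be phases. The distinct classes of entries of $W$ are therefore the products of the $c_1$ classes from $U$ and the $c_2$ classes from $V$, giving cardinality $c_1c_2$.

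The only subtle part is this last counting step: one must confirm that every pair of a $U$-class and a $V$-class actually occurs, which the bijection above guarantees.
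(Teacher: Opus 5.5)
Your proof is correct and follows essentially the paper's route: you verify the construction $\ket{w_{i,j,k,l}}=\ket{u_{i,j+k}}\otimes\ket{v_{j,i+l}}$ stated before the lemma, where the cyclic shifts reduce every row and column comparison to a single row of $U$ or of $V$. You obtain cardinality $c_1c_2$ from the bijection onto pairs of cells together with the tensor-product fact the paper records as Lemma~\ref{state vector}; the paper only states the construction and cites that fact, so you have supplied the routine verifications it leaves to \cite{Cao-July}.
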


The proof of Lemma~\ref{construction-row-quantum Latin rectangle} in \cite{Cao-July} relies on the following important fact, which is also frequently used in our subsequent discussion of cardinality.

\begin{lemma} \cite{Cao-July}\label{state vector}   Let \( \ket{a}, \ket{b} \in \mathcal{H}_n \), and \(\ket{c}, \ket{d} \in \mathcal{H}_n \) be unit vectors. Then \( \ket{a} \otimes \ket{c} \) and \( \ket{b} \otimes \ket{d} \) are identical if and only if \( \ket{a} \) is identical to \( \ket{b} \) and \( \ket{c} \) is identical to \( \ket{d} \).
\end{lemma}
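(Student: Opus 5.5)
The "if" direction is immediate, since equality up to a global phase is preserved under tensoring. If $\ket{a}=e^{i\alpha}\ket{b}$ and $\ket{c}=e^{i\gamma}\ket{d}$, then by bilinearity of the tensor product
\[
\ket{a}\otimes\ket{c}=e^{i(\alpha+\gamma)}\ket{b}\otimes\ket{d},
\]
so the two product vectors are identical.

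For the converse, I would use inner products rather than coordinates. Suppose $\ket{a}\otimes\ket{c}=e^{i\theta}\ket{b}\otimes\ket{d}$. Take the inner product of each side with itself cross-wise, using $\langle x\otimes y|x'\otimes y'\rangle=\langle x|x'\rangle\langle y|y'\rangle$. Pairing the left side with $\ket{b}\otimes\ket{d}$ gives
\[
\langle b|a\rangle\langle d|c\rangle=e^{i\theta}\langle b|b\rangle\langle d|d\rangle=e^{i\theta},
\]
which has modulus $1$. Hence $|\langle b|a\rangle|\,|\langle d|c\rangle|=1$.

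By Cauchy--Schwarz, each factor is at most $1$ for unit vectors, so $|\langle b|a\rangle|=1$ and $|\langle d|c\rangle|=1$. The equality case of Cauchy--Schwarz then shows that $\ket{a}$ is a scalar multiple of $\ket{b}$, and the scalar has modulus $1$ because both vectors are unit vectors. So $\ket{a}=e^{i\alpha}\ket{b}$, and in the same way $\ket{c}=e^{i\gamma}\ket{d}$. That is, $\ket{a}$ is identical to $\ket{b}$ and $\ket{c}$ is identical to $\ket{d}$.

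The only point that needs care is this equality case of Cauchy--Schwarz. If one preferred to avoid it, the same conclusion follows from a coordinate argument: write both products as $n\times n$ matrices $ac^{T}$ and $bd^{T}$. These are rank one, and they are proportional, so their column spaces coincide, giving $\ket{a}\propto\ket{b}$; comparing row spaces gives $\ket{c}\propto\ket{d}$. Normalisation again forces the constants to be phases.

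The statement writes both $\ket{c},\ket{d}\in\mathcal{H}_n$, but the argument above does not use the dimension. It works unchanged when $\ket{c},\ket{d}$ lie in a space $\mathcal{H}_m$ of different dimension, which is the form needed later when the lemma is applied to Lemma~\ref{construction-row-quantum Latin rectangle}.
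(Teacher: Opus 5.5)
The paper gives no proof of this lemma; it quotes it from \cite{Cao-July}, so there is no in-paper argument to compare yours with. Your proof is correct and complete. Pairing $\ket{a}\otimes\ket{c}=e^{i\theta}\ket{b}\otimes\ket{d}$ with $\ket{b}\otimes\ket{d}$ gives $\langle b|a\rangle\langle d|c\rangle=e^{i\theta}$. The Cauchy--Schwarz bounds $|\langle b|a\rangle|\le 1$ and $|\langle d|c\rangle|\le 1$ then force both moduli to equal $1$. The equality case of Cauchy--Schwarz yields $\ket{a}=e^{i\alpha}\ket{b}$ and $\ket{c}=e^{i\gamma}\ket{d}$. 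The converse is immediate, as you say.

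Of your two routes, the Cauchy--Schwarz one is shorter, but it uses that all four vectors have norm $1$. The rank-one argument views $\ket{a}\otimes\ket{c}$ as the matrix $ac^{T}$ and compares column and row spaces. It only needs the vectors to be nonzero, and it gives $\ket{a}\propto\ket{b}$ and $\ket{c}\propto\ket{d}$ directly. That extra generality is handy in this paper, because Appendix A compares vectors such as $\ket{a_{p,i,j}}\otimes\ket{c_{p,s,t,[0,m-1]}}$, whose second factor is a truncation that need not be a unit vector.

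Your remark that the dimension of the second factor plays no role is also correct. It is exactly what Lemma~\ref{construction-row-quantum Latin rectangle} needs, since there the two factors lie in $\mathcal{H}_n$ and $\mathcal{H}_m$.
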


Sometimes, the MCQLS constructed by Lemma~\ref{construction-row-quantum Latin rectangle} satisfies some transversal properties. Lemma \ref{lem1.2.4} is precisely such a result.
The proof of Lemma \ref{lem1.2.4} implies the following simple  but important facts. Let $U=(\ket{u_{i,k}})_{0\leq i,j\leq n-1}$ and $V=(\ket{v_{j,l}})_{0\leq i,j\leq n-1}$ be two $n \times n$ row-quantum Latin rectangle with maximum cardinality. If the $n$ vectors in some column of  $V$, the $l$-th column for example,  are  mutually orthogonal, then the $n^2\times n^2$ array $W$ defined  above
contains $n$  disjoint transversals
$$
T_{kl} =\{ \ket{ u_{i,k}} \otimes \ket{ u_{j,l} } \mid i, j = 0, 1, \dots, n - 1 \}, k=0,1,\dots, n-1,
$$
where
 the entries of $T_{kl}$ occupy the cell set  $\{(in+(k-j),jn +(l-i)), i, j = 0, 1, \dots, n - 1\}$ of $W$. Note that $k-j$ and $l-i$ are taken modulo $n$ here.
 Furthermore,  if  the $n$ vectors in the  $l'$-th column ($l'\neq l$) of  $V$ are  also mutually orthogonal, then  $$
T_{kl'} =\{ \ket{ u_{i,k}} \otimes \ket{ u_{j,l'} } \mid i, j = 0, 1, \dots, n - 1 \}, k=0,1,\dots, n-1
$$ are also  $n$ disjoint  transversals with $T_{kl'}$ having  cell set $\{(in+(k-j),jn +(l'-i)), i, j = 0, 1, \dots, n - 1\}$, Clearly, $T_{kl}$, $T_{kl'}$, $k=0,1,\dots, n-1$
form $2n$ disjoint  transversals.
Then, by Theorem~\ref{MCQLS(n)}, Lemma \ref{lem1.2.4} can be improved as follows.

\begin{lemma}\label{MCIDQLS(v^2)} For any positive integer $n\geq4$, there exists an MCQLS$(n^2)$ having $n^2$ disjoint transversals.

\end{lemma}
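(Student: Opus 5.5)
The plan is to apply the product construction preceding Lemma~\ref{construction-row-quantum Latin rectangle} with $m=n$, taking both factors to be full quantum Latin squares. By Theorem~\ref{MCQLS(n)}, since $n\geq 4$ there is an $\mathrm{MCQLS}(n)$, say $U=(\ket{u_{i,k}})$. Set $V=U$, so $\ket{v_{j,l}}=\ket{u_{j,l}}$. Both $U$ and $V$ are $n\times n$ row-quantum Latin rectangles of cardinality $n^2$. By Lemma~\ref{construction-row-quantum Latin rectangle} the array $W$ with entries $\ket{w_{i,j,k,l}}=\ket{u_{i,j+k}}\otimes\ket{v_{j,i+l}}$ is a $\mathrm{QLS}(n^2)$ of cardinality $n^2\cdot n^2=n^4$. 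Distinctness up to global phase follows from Lemma~\ref{state vector}, so $W$ is an $\mathrm{MCQLS}(n^2)$.

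The key extra feature is that $U$ and $V$ are genuine quantum Latin squares, so \emph{every} column of $V$ (and of $U$) is an orthonormal basis. This lets us use the observation after Lemma~\ref{state vector} for all $n$ columns $l=0,\dots,n-1$ simultaneously, not just one or two. For each pair $(k,l)\in\{0,\dots,n-1\}^2$, we consider the cell set
$$C_{kl}=\{(in+(k-j),\,jn+(l-i)) : 0\le i,j\le n-1\}.$$

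Three facts need to be verified for each $C_{kl}$.
\begin{enumerate}
\item[(a)] Cell $(in+(k-j),jn+(l-i))$ lies in block $(i,j)$ at local position $(k-j,l-i)$. Its entry is therefore $\ket{u_{i,k}}\otimes\ket{v_{j,l}}$, and so $C_{kl}$ carries exactly $T_{kl}$.
\item[(b)] The $n^2$ row indices $in+(k-j)$ are distinct, because the row index determines both $i$ and $k-j$, hence $j$. The column indices are distinct for the symmetric reason. So $C_{kl}$ has one cell in each row and each column.
\item[(c)] The vectors $\{\ket{u_{i,k}}\}_i$ form an orthonormal basis of $\mathcal{H}_n$ (column $k$ of $U$), and likewise $\{\ket{v_{j,l}}\}_j$ (column $l$ of $V$). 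Hence the $n^2$ tensor products $\ket{u_{i,k}}\otimes\ket{v_{j,l}}$ form an orthonormal basis of $\mathcal{H}_{n^2}$, and $C_{kl}$ is a transversal.
\end{enumerate}

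Finally, the $n^2$ transversals are pairwise disjoint. Given a cell $(r,s)$ in some $C_{kl}$, writing $r=in+a$ and $s=jn+b$ with $0\le a,b<n$ recovers $i,j,a,b$ uniquely. Then $k\equiv a+j$ and $l\equiv b+i \pmod n$ are forced, so each cell lies in exactly one $C_{kl}$. This gives $n^2$ disjoint transversals; since they exhaust all $n^4$ cells, they partition $W$.

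The only step needing care is the index bookkeeping in (a) and in the disjointness check, namely that the modular shifts in the definition of $W$ match the cell sets. The rest is immediate from Theorem~\ref{MCQLS(n)} and Lemma~\ref{state vector}.
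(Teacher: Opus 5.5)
Your proposal is correct and follows the paper's argument. You take $\mathrm{MCQLS}(n)$s from Theorem~\ref{MCQLS(n)} as both factors $U$ and $V$ of the product array $W$. Since every column of each factor is an orthonormal basis, all $n^2$ sets $T_{kl}$ are transversals, and their cell sets partition $W$. Your explicit check in (c) that column $k$ of $U$, and not only column $l$ of $V$, must be orthonormal is a point the paper's remark leaves implicit. Choosing $V=U$ is harmless by Lemma~\ref{state vector}.
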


In fact, when $n$ is even and the above $V$ has two orthogonal columns, we can adjust the rows and columns of $W$ to transform it into an $\mathrm{MCDQLS}(n^2)$.

\begin{lemma} Suppose $n$ is an even integer. If there exists an $n \times n$ row-quantum Latin rectangle with maximum cardinality such that two of its columns form orthonormal bases of $\mathcal{H}_n$, then there exists an  MCDLS$(n^2)$.

\end{lemma}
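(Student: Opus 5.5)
The plan is to take the square $W$ from the construction before Lemma~\ref{construction-row-quantum Latin rectangle}, with $m=n$, $U=V$ the given rectangle, and then permute the rows and columns of $W$ so that one transversal becomes the main diagonal and a disjoint one becomes the anti-diagonal. Permuting rows and columns of a QLS gives a QLS. It also leaves the multiset of vectors, and hence the cardinality, unchanged. So it suffices to find two disjoint transversals $T,T'$ of the $\mathrm{MCQLS}(n^2)$ $W$ that can be moved simultaneously to the two diagonals.

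\textbf{Step 1 (normalize $V$).} Permuting the columns of a row-quantum Latin rectangle preserves the row condition, the maximum cardinality, and the orthonormality of individual columns. So I first move the two orthonormal columns of $V$ to positions $l=0$ and $l'=n/2$; this is where $n$ even is used. By the remarks preceding Lemma~\ref{MCIDQLS(v^2)}, $W$ is an $\mathrm{MCQLS}(n^2)$ containing the disjoint transversals $T=T_{k l}$ and $T'=T_{k l'}$ for a fixed $k$, say $k=0$.

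\textbf{Step 2 (reduce to a permutation condition).} Write $N=n^2$. Let $f,g$ be the bijections rows$\to$columns determined by $T,T'$. From the cell set $\{(in+(k-j),\,jn+(l-i))\}$ we get $f(in+a)=(k-a)n+(l-i)$, and likewise $g(in+a)=(k-a)n+(l'-i)$.
\begin{itemize}
\item Reordering the columns by $f$ puts $T$ on the main diagonal and turns $T'$ into the permutation $\pi=f^{-1}\circ g$ (rows to new columns).
\item $\pi$ has no fixed points because $T\cap T'=\emptyset$.
\item A further simultaneous permutation $\sigma$ of rows and columns keeps the diagonal fixed and conjugates $\pi$ to $\sigma\pi\sigma^{-1}$.
\end{itemize}
Hence it suffices that $\pi$ be a fixed-point-free involution. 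Then it is conjugate to the involution $r\mapsto N-1-r$, which is also fixed-point-free since $N$ is even, and that involution is exactly the anti-diagonal.

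\textbf{Step 3 (compute $\pi$).} This is the only real calculation and the step I would check most carefully. Writing a column as $bn+d$, we have $f^{-1}(bn+d)=(l-d)n+(k-b)$. Substituting gives
$$\pi(bn+d)=g\big((l-d)n+(k-b)\big)=bn+\big(d+l'-l\big),$$
with the last coordinate taken mod $n$. So $\pi$ translates the second coordinate by $\delta=l'-l=n/2$. Since $2\delta\equiv 0$ and $\delta\not\equiv 0 \pmod n$, $\pi$ is a fixed-point-free involution. Choosing $\sigma$ that sends each 2-cycle $\{bn+d,\,bn+d+n/2\}$ to a pair $\{r,N-1-r\}$ finishes the construction. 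The result is a QLS of order $n^2$ with full cardinality whose main diagonal and anti-diagonal are transversals, i.e.\ an $\mathrm{MCDQLS}(n^2)$, which is what the statement (written as MCDLS) intends.

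The main obstacle is the index bookkeeping in Step 3, since the modular indices in $T_{kl}$ are easy to get wrong. If the translation turned out not to be an involution for the chosen positions, the fallback is to pair $T_{kl}$ with $T_{k'l'}$ for a suitable $k'\neq k$ instead.
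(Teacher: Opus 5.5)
Your proposal is correct and is essentially the paper's proof. The paper likewise reorders the columns of $W$ so that $T_{0,0}$ lands on the main diagonal, observes that $T_{0,n/2}$ then occupies the main diagonal shifted cyclically by $n^2/2$ columns, and realizes your conjugating permutation $\sigma$ explicitly by reversing the right half of the columns and then the lower half of the rows.

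Two small corrections are needed. First, the formula you derive in Step 3 is that of $g\circ f^{-1}$, not of $\pi=f^{-1}\circ g$; the latter is $r\mapsto r+n^2/2 \pmod{n^2}$. This is harmless because the two permutations are conjugate, but $\sigma$ must pair $\{r,r+n^2/2\}$ rather than $\{bn+d,bn+d+n/2\}$. Second, $T_{k,l}$ is a transversal only if column $k$ of $U$ is also orthonormal, not just column $l$ of $V$. So your choice $k=0$ with $U=V$ is genuinely needed, and the fallback to an arbitrary $k'$ would not be valid.
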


\proof Let $U=(\ket{u_{i,k}} )$  and $V=(\ket{v_{j,l}} )$  be both an $n \times n$ row-quantum Latin rectangle with maximum cardinality,  where $i,j,k,l\in  \{0,1,\dots,n-1\}$.
Without loss of generality, suppose that the entries of the $0$-th column and the $\frac{n}{2}$-th column of $V$ form orthonormal bases of $\mathcal{H}_n$, respectively.

Construct an array $W=(\widetilde{w}_{s,t})$ of order $n^2$, which is divided into $n^2$ blocks of size $n \times n$.
 Define $\ket{\widetilde{w}_{in+k,jn+l}} = \ket{w_{i,j,k,l}} = \ket{u_{i,j+k}}\otimes \ket{v_{j,i+l}}$ as the entry in the $k$-th row and $l$-th column of the $(i,j)$-th block of the array $W$, where $i,j,k,l \in  \{0, 1, \dots, n - 1\}$.  We already know that $W$ is a $\mathrm{MCQLS}(n^2)$  and that $T_{k,l} = \{\ket{u_{i,k}} \otimes \ket{v_{j,l}} \mid i,j=0,1,\dots,n-1\}$ forms a transversal of $W$, where $(k,l) = (0,0)$ or $(0,\frac{n}{2})$.
To better illustrate the adjustment of $W$ below, we refer to the $(in+k, jn+l)$ cell of $W$ as the $(i,k)$-th row and $(j,l)$-th column, abbreviated as the $(i,j,k,l)$ cell, and let $\oplus_n$ and $\ominus_n$ denote addition and subtraction in $\mathbb{Z}_n$, respectively.
 Thus,
\(
\ket{w_{i,j,k,l}} = \ket{u_{i,\,j\oplus_n k}} \otimes \ket{v_{j,\,i\oplus_n l}}
\)
is equivalent to
\(
\ket{w_{i,j,\,k\ominus_n j,\,l\ominus_n i}} = \ket{u_{i,k}} \otimes \ket{v_{j,l}}.
\)

Construct a new empty array $W'$ with exactly the same labeling and block structure as $W$. Rearrange the columns of $W$ in a suitable order to form the columns of $W'$ so that the entry of $T_{0,0}$ in $W$ lies on the main diagonal of the new array $W'$.
Note that the  $(i',j',0\ominus_n j',\frac{n}{2}\ominus_n i')$ cell in the cell set of $T_{0,\frac{n}{2}}$ and the  $((i'\oplus_n\frac{n}{2}),j',0\ominus_n j',0\ominus_n(i'\oplus_n\frac{n}{2}))$ cell in the cell set of $T_{0,0}$ are on the same column of $W$. So, when the entry in cell $(i'\oplus_n\frac{n}{2},j',0\ominus_n j',0\ominus_n(i'\oplus_n\frac{n}{2}))$ of $T_{0,0}$ is moved onto the main diagonal of $W'$,  the entry in the $(i',j',0\ominus_n j',\frac{n}{2}\ominus_ni')$ cell of $T_{0,\frac{n}{2}}$ is moved to the
$(i',i'\oplus_n\frac{n}{2},0\ominus_n j',0\ominus_nj')$  cell of $W'$. 

Considering the  $(i',i'\oplus_n\frac{n}{2},0\ominus_n j',0\ominus_nj')$ cell and the  $(i',i',0\ominus_n j',0\ominus_nj')$ cell  (\(i', j' \in \mathbb{Z}_n\)
),
it is clear that $W'$ has the other transversal whose cell set can be obtained by moving the main diagonal cells $\frac{n^2}{2}$ columns to the right.

Starting from $W'$, an MCDQLS$(n^2)$ is obtained by first flipping the right half of the columns and then flipping the lower half of the rows.

\qed

\begin{theorem} \label{DQLS-square} For any even integer $n\geq 4$, there exists an  MCDQLS$(n^2)$.
\end{theorem}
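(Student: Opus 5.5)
The plan is to deduce Theorem~\ref{DQLS-square} directly from the preceding lemma. That lemma shows that, for even $n$, an $n\times n$ row-quantum Latin rectangle with maximum cardinality having two columns that are orthonormal bases of $\mathcal{H}_n$ yields an MCDQLS$(n^2)$. (Its statement says MCDLS$(n^2)$, but its proof ends with an MCDQLS$(n^2)$.) So for every even $n\ge 4$ I only need to produce such a rectangle $V$. For $U$ I can take the same array, or any $n\times n$ row-quantum Latin rectangle of maximum cardinality.

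To build $V$, I start from an MCQLS$(n)$, which exists for every $n\geq 4$ by Theorem~\ref{MCQLS(n)}. Any QLS$(n)$ is in particular an $n\times n$ row-quantum Latin rectangle: each row is an orthonormal basis. If it has full cardinality $n^2$, the rectangle has maximum cardinality $mn=n^2$. Moreover, every column of a QLS$(n)$ is an orthonormal basis of $\mathcal{H}_n$, so any two columns qualify. In the proof of the lemma, the two distinguished columns are taken, without loss of generality, to be the $0$-th and $\frac{n}{2}$-th. If needed, I permute the columns of the MCQLS$(n)$ to put them there; this preserves the Latin property and the cardinality, so no real choice is required.

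With $U=V$ equal to this MCQLS$(n)$ and $n$ even, the lemma applies and gives an MCDQLS$(n^2)$ for all even $n\ge 4$. The only step I expect to need care is checking that the final row and column flips in the lemma's proof preserve the quantum Latin property and the cardinality $n^2\cdot n^2=n^4$. They do, because permuting rows and columns of a QLS keeps each row and column an orthonormal basis and does not change the multiset of vectors. The cardinality itself follows from Lemma~\ref{state vector}: distinct pairs $(\ket{u_{i,k}},\ket{v_{j,l}})$ give distinct tensor products, and since $U$ and $V$ each have $n^2$ distinct vectors, $W$ has $n^4$ distinct entries. The remaining transversal bookkeeping is already done in the lemma.
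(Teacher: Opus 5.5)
Your proposal is correct and follows the paper's own route, which the paper leaves implicit: an MCQLS$(n)$ from Theorem~\ref{MCQLS(n)} is a maximum-cardinality row-quantum Latin rectangle whose columns are all orthonormal bases, so the preceding lemma applied with $U=V$ gives an MCDQLS$(n^2)$ for every even $n\ge 4$. One slip: your parenthetical that $U$ could be any maximum-cardinality row-quantum Latin rectangle is wrong, because $T_{0,0}$ and $T_{0,n/2}$ are transversals only if column $0$ of $U$ is also an orthonormal basis, which your actual choice $U=V$ guarantees.
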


\subsection{Complete mapping}

In 2025, Zang  et al. \cite{cite19}  employed Fourier matrix to construct an MCQLS\((v)\) for any order \( v \geq 4 \) (see  Theorem~\ref{MCQLS(n)}). Building on their approach, this subsection presents a direct construction of $\mathrm{MCDQLS}(v)$ via a strong complete mapping that satisfies a specific condition.

The construction of MCQLS\( (v)\) given by  Zang et al. \cite{cite19} utilizes the properties of the \(v\)-th primitive roots of unity, and the underlying idea can be interpreted from the perspective of linear transformations. Consider the Fourier matrix
\begin{align*}
            F_v= \frac{1}{\sqrt{v}}
\begin{pmatrix}
    1 & 1 & \dots & 1 & \dots & 1\\
    1 & \omega & \dots & \omega^j & \dots & \omega^{v-1}\\
    1 & \omega^2 & \dots & \omega^{2j} & \dots & \omega^{2(v-1)}\\
    \vdots & \vdots & & \vdots &  & \vdots\\
    1 & \omega^{k} & \dots & \omega^{kj} & \dots & \omega^{k(v-1)}\\
    \vdots & \vdots & & \vdots &   & \vdots\\
    1 & \omega^{v-1} & \dots & \omega^{(v-1)j} & \dots & \omega^{(v-1)^2}
\end{pmatrix}, 
        \end{align*}
where $\omega$ is  a complex primitive $v$-th root of unit. Denote the column vectors of this unitary matrix by \( f_0, f_1, \dots, f_{v-1} \) in order. Clearly, they form an orthonormal basis of \(\mathcal{H}_{v}\).
Let $\mu$ be a permutation on \( \mathbb{Z}_n\) and let  \(g_i = \frac{1}{\sqrt{v}} (\omega^{\mu(0)\times i}, \omega^{\mu(1)\times i}, \dots, \omega^{\mu(v-1)\times i})^{\top}\), \(i = 0,1,\dots, v-1\). Similarly, \(g_0, g_1, \dots, g_{v-1}\)
 form another orthonormal basis of \(\mathcal{H}_{v}\).
 Then associate \(f_0, f_1, \dots, f_{v-1}\) and \(g_0, g_1, \dots, g_{v-1}\) with the unitary matrices \(F_0, F_1, \dots, F_{v-1}\) and \(G_0, G_1, \dots, G_{v-1}\), respectively, where 
 \(
F_j= \operatorname{diag}(\omega^{0\times j}, \omega^{1 \times j}, \dots, \omega^{(v-1) \times j})
\) (a diagonal matrix with diagonal entries $\omega^{0\times j}, \omega^{1 \times j}, \ldots, \omega^{(v-1)\times j}$) 
and 
\(G_i = \operatorname{diag}( \omega^{\mu(0) \times i}, \omega^{\mu(1) \times i}, \dots, \omega^{\mu(v-1) \times i} ).\)
Suppose that the matrices of the unitary transformations \(\mathcal{F}_1, \mathcal{F}_2, \dots, \mathcal{F}_{v-1}, \mathcal{G}_1, \mathcal{G}_2, \dots, \mathcal{G}_{v-1}\) with respect to the standard basis \(\ket{0}, \ket{1}, \dots, \ket{v-1}\) of \ \(\mathcal{H}_{v}\) are exactly \(F_1, F_2, \dots, F_{v-1}, G_1, G_2, \dots, G_{v-1}\), respectively.
When constructing such a QLS$(v)$ $M$, we attempt to place at its \((i, j)\) cell the vector
\[ \ket{\phi_{i,j}}=\frac{1}{\sqrt{v}} 
(\omega^{0\times j+i\mu(0)},\dots,\omega^{t\times j+i\mu(t)},\dots,\omega^{(v-1)\times j+i\mu(v-1)})^{T},\] which is obtained 
by applying the unitary transformation \(\mathcal{F}_j\) to the vector \(g_i\). 
Equivalently, it can also be regarded as the vector obtained by applying the unitary transformation  \(\mathcal{G}_i\) to the vector \(f_j\). 

Under this construction, the \(i\)-th row of \(M\) is obtained by applying the unitary transformation \(\mathcal{G}_i\) to \(f_0, f_1, \dots, f_{v-1}\), and the \(j\)-th column of \(M\) is obtained by applying the unitary transformation \(\mathcal{F}_j\) to \(g_0, g_1, \dots, g_{v-1}\). Since both \(f_0, f_1, \dots, f_{v-1}\) and \(g_0, g_1, \dots, g_{v-1}\) are orthonormal bases of \(\mathcal{H}_{v}\), the desired property of a QLS$(v)$ is clearly guaranteed. 
Based on this, we wish to impose further restrictions on the permutation \(\sigma\) to ensure that \(M\) has maximum cardinality and satisfies the idempotent or diagonal property.

Let $(G, +)$  be an additive  group. A function \(\mu\colon G \rightarrow G\) is called a complete mapping of \(G\) if both \(\mu\) and \(\mu + \mathrm{id}\) 
(where \(\mathrm{id} \) is the identity permutation and \((\mu + \mathrm{id})(t) = \mu(t) + t\))
are permutations of \(G\). 
Another closely related concept is that of an orthomorphism of \((G, +)\), which is a permutation \(\mu: G \rightarrow G\) such that \(\mu - \mathrm{id}\) is also a permutation. A complete mapping of \(G\) is called a strong complete mapping if it is also an orthomorphism of \((G, +)\).
We say that a permutation \(\mu\) of \(\mathbb{Z}_v\) satisfies the condition  $(*)$ if there exist distinct \(s, t, k \in \mathbb{Z}_v\) such that 
\begin{align}
\gcd((t-s)(\mu(k)-\mu(s)) - (k-s)(\mu(t)-\mu(s)),\, v ) = 1. \tag{$*$}
\end{align}
Here, if \(v\) is prime, the  condition \((*)\)  reduces to the existence of distinct \(s, t, k \in \mathbb{Z}_v\) such that $(t-s)(\mu(k)-\mu(s)) - (k-s)(\mu(t)-\mu(s))\not\equiv 0 \pmod{v}$.

\begin{theorem}\label{Construction--complete mapping} If there exists a complete mapping of \(\mathbb{Z}_v\) satisfying the  condition  $(*)$, then there exists an idempotent  MCQLS$(v)$.
Furthermore, if there exists a strong complete mapping of \(\mathbb{Z}_v\) satisfying the  condition $(*)$, then there exists an MCDQLS$(v)$.
\end{theorem}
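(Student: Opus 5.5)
The plan is to use the matrix $M$ described just before the statement, taking $\mu$ to be the given complete mapping. The $(i,j)$ cell of $M$ holds
\[
\ket{\phi_{i,j}}=\frac{1}{\sqrt v}\bigl(\omega^{tj+i\mu(t)}\bigr)_{t\in\mathbb Z_v}^{\top}.
\]
As observed above, the rows of $M$ are images of the basis $f_0,\dots,f_{v-1}$ under the unitary $\mathcal G_i$, and the columns are images of $g_0,\dots,g_{v-1}$ under $\mathcal F_j$. So $M$ is a $\mathrm{QLS}(v)$, and this part needs no hypothesis on $\mu$ beyond its being a permutation. Three things remain to check: the diagonal condition(s), and full cardinality.

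\emph{Main diagonal.} The entry in cell $(i,i)$ has coordinates $\omega^{i(t+\mu(t))}/\sqrt v$. The inner product of the $i$-th and $i'$-th diagonal entries is
\[
\frac1v\sum_{t}\omega^{(i-i')(t+\mu(t))}.
\]
Since $\mu+\mathrm{id}$ is a permutation of $\mathbb Z_v$, this is $\frac1v\sum_{x\in\mathbb Z_v}\omega^{(i-i')x}=0$ for $i\neq i'$. Hence the main diagonal is a transversal, and $M$ is idempotent.

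\emph{Anti-diagonal (strong case).} The entry in cell $(i,v-1-i)$ has coordinates $\omega^{-t}\,\omega^{i(\mu(t)-t)}/\sqrt v$. In the inner product of two such vectors the factor $\omega^{-t}$ cancels against its conjugate, leaving
\[
\frac1v\sum_t\omega^{(i-i')(\mu(t)-t)}.
\]
This vanishes for $i\neq i'$ because $\mu-\mathrm{id}$ is a permutation, i.e. $\mu$ is an orthomorphism. So for a strong complete mapping both diagonals are transversals.

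\emph{Cardinality.} This is the step that uses condition $(*)$, and I expect it to be the main point. Suppose $\ket{\phi_{i,j}}=e^{i\theta}\ket{\phi_{i',j'}}$. Comparing coordinates, $\omega^{ta+b\mu(t)}$ must be independent of $t$, where $a=j-j'$ and $b=i-i'$. Since $\omega$ is a primitive $v$-th root of unity, this means $ta+b\mu(t)$ is constant modulo $v$. In particular, for the distinct $s,t,k$ supplied by $(*)$ we get the linear system over $\mathbb Z_v$
\[
a(t-s)+b(\mu(t)-\mu(s))\equiv 0,\qquad a(k-s)+b(\mu(k)-\mu(s))\equiv 0 \pmod v.
\]
Its determinant is exactly $(t-s)(\mu(k)-\mu(s))-(k-s)(\mu(t)-\mu(s))$, which is a unit modulo $v$ by $(*)$. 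Multiplying by the adjugate matrix then forces $a\equiv b\equiv 0$, so $(i,j)=(i',j')$. Thus all $v^2$ entries are pairwise distinct up to global phase. This gives an idempotent $\mathrm{MCQLS}(v)$ in the first case and an $\mathrm{MCDQLS}(v)$ in the second.
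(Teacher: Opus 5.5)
Your proposal is correct and follows essentially the same route as the paper: you use the same array $M=(\ket{\phi_{i,j}})$, get the diagonal and anti-diagonal transversals from $\mu+\mathrm{id}$ and $\mu-\mathrm{id}$ being permutations, and get full cardinality from the $2\times 2$ congruence system whose determinant is the unit supplied by $(*)$. Your handling of the cardinality step (reducing $\ket{\phi_{i,j}}=e^{i\theta}\ket{\phi_{i',j'}}$ to $ta+b\mu(t)$ being constant mod $v$, then inverting via the adjugate) is a somewhat cleaner statement of the paper's argument.
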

\begin{proof}
Let $M=(\ket{\phi_{i,j}})_{0\leq i,j \leq v-1}$ with \[ \ket{\phi_{i,j}}=\frac{1}{\sqrt{v}} 
(\omega^{0\times j+i\mu(0)},\dots,\omega^{t\times j+i\mu(t)},\dots,\omega^{(v-1)\times j+i\mu(v-1)})^{T},\] where $\omega =e^{\frac{2\pi \mathrm{i}}{v}}$ and
$\mu$ is a complete mapping of $Z_v$ satisfying the condition $(*)$ .
Clearly every $\ket{\phi_{i,j}}$ ($0\leq i,j \leq v-1$) is a unit vector of  $\mathcal{H}_v$. Since
$1+\omega^{j'-j}+\omega^{2(j'-j)}+\dots +\omega^{(\frac{v}{\gcd(v,j'-j)}-1)(j'-j)}=0$,
the equation 
\[
\langle \phi_{i,j} \mid \phi_{i,j'} \rangle = \frac{1}{v} \sum_{t=0}^{v-1} \omega^{t(j'-j)} = 0
\]
holds for any \(i, j, j' \in \{0,1,\dots, v-1\}\) with \(j \neq j'\). Similarly, since $\mu$ is a permutation of $Z_v$, 
 the equation
\[
\langle \phi_{i,j} \mid \phi_{i',j} \rangle = \frac{1}{v} \sum_{t=0}^{v-1} \omega^{(i'-i)\mu(t)} = 0
\]
also holds for any \(i, i', j \in \{0,1,\dots, v-1\}\) with \(i \neq i'\). Thus, $M$ is a QLS$(v)$. 

As \(\mu\) satisfies the \((*)\) condition, there exist distinct \(s, t, k \in \mathbb{Z}_v\) such that
$$((t-s)(\mu(k)-\mu(s))-(k-s)(\mu(t)-\mu(s)),v)=1.$$
Then the following system of linear congruences has only the zero solution:
$$
\begin{cases}
(t-s) x+(\mu(t)-\mu(s))y\equiv 0 \pmod{v},\\
(k-s) x+(\mu(k)-\mu(s))y\equiv 0 \pmod{v}.\\
\end{cases}
$$
Thus, the two congruences  $$(t-s)\times j+i(\mu(t)-\mu(s))\equiv (t-s)\times j'+i'(\mu(t)-\mu(s))\pmod{v}$$ and $$(k-s)\times j+i(\mu(k)-\mu(s))\equiv (k-s)\times j'+i'(\mu(k)-\mu(s))\pmod{v}$$  cannot hold simultaneously 
for any \((i, j)\neq (i', j')\). 
This implies that  
$$(
1, \omega^{(t-s)\times j+i\mu (t)-i\mu (s)}, \omega^{(k-s)\times j+i\mu (k)-i\mu (s)}
) \neq (
1,\omega^{(t-s)\times j'+i'\mu (t)-i'\mu (s)}, \omega^{(k-s)\times j'+i'\mu (k)-i'\mu (s)}
) $$ or
$$(\omega^{s\times j+i\mu(s)},\omega^{t\times j+i\mu(t)},\omega^{k\times j+i\mu(k)})\neq e^{\lambda \mathrm{i}}(\omega^{s\times j'+i'\mu(s)},\omega^{t\times j'+i'\mu(t)},\omega^{k\times j'+i'\mu(k)}) $$ for any $\lambda\in [0, 2\pi)$ and  \((i, j)\neq (i', j')\). 
Then $\ket{\phi_{i,j}})$ and $\ket{\phi_{i',j'}})$ are distinct if \((i, j)\neq (i', j')\), i.e., the cardinality of $M$  is the maximum.

By examining the diagonal entries of  $M$, we have that for any $i \neq i'$,
\[ \langle \phi_{i,i} | \phi_{i',i'} \rangle
=\frac{1}{v}\sum\limits_{t=0}^{v-1}\omega^{ti'+i'\mu (t)-(ti+i\mu (t))}
=\frac{1}{v}\sum\limits_{t=0}^{v-1}\omega^{(i'-i)(\mu (t)+t)} =0. 
\]
This orthogonality follows because   $\mu+\mathrm{id}$ is a permutation of $\mathbb{Z}_v$.
So  $M$ is an idempotent  MCQLS$(v)$.

Further more,  suppose that $\mu$ is a strong complete mapping of \(\mathbb{Z}_v\).  Similarly, by examining the anti-diagonal entries of  $M$, we have that for any $i \neq i'$, 
\[ \langle \phi_{i,v-1-i} | \phi_{i',v-1-i'} \rangle
=\frac{1}{v}\sum\limits_{t=0}^{v-1}\omega^{t(v-1-i')+i'\mu (t)-(t(v-1-i)+i\mu (t))}
=\frac{1}{v}\sum\limits_{t=0}^{v-1}\omega^{(i'-i)(\mu (t)-t)} =0.
\] This orthogonality follows because  $\mu-\mathrm{id}$ is also a permutation of $\mathbb{Z}_v$.
So  $M$ is an  MCDQLS$(v)$.
\qed

\end{proof}

In the above construction, for any $s \in \mathbb{Z}_v$, define
\[
T_s = \{\ket{\phi_{i,j}} \mid i - j \equiv s \pmod{v}\}.
\]
Since $\mu + \mathrm{id}$ is a permutation of $\mathbb{Z}_v$, we have
\[
 \langle \phi_{i,i+s} \mid \phi_{i',i'+s} \rangle
=\frac{1}{v}\sum\limits_{t=0}^{v-1}\omega^{t(i'+s)+i'\mu (t)+(-t(i+s)-i\mu (t))}
=\frac{1}{v}\sum\limits_{t=0}^{v-1}\omega^{(i'-i)(\mu (t)+t)} =0,
\]
so each $T_s$ is a transversal of $M$ in the proof of Theorem \ref{Construction--complete mapping}.
Furthermore, when $s \neq s' \in \mathbb{Z}_v$, the two transversals $T_s$ and $T_{s'}$ lie on  distinct pan-diagonals in the main diagonal direction.
Thus, all pan-diagonals of $M$ in the main diagonal direction
form transversals.

Similarly, for any $s \in \mathbb{Z}_v$, define
\[
T'_s = \{\ket{\phi_{i,j}} ~|~ i+j \equiv s \pmod{v}\}. 
\]
Since $\mu -\mathrm{id}$ is a permutation of $\mathbb{Z}_v$, we have
\[
 \langle \phi_{i,v-1-i+s} \mid \phi_{i',v-1-i'+s} \rangle
=\frac{1}{v}\sum\limits_{t=0}^{v-1}\omega^{t(v-1-i'+s)+i'\mu (t)+(-t(v-1-i+s)-i\mu (t))}
=\frac{1}{v}\sum\limits_{t=0}^{v-1}\omega^{-(i'-i)(\mu (t)-t)} =0,
\]
so each $T'_s$ is also a transversal of $M$.
Furthermore, all pan-diagonals of $M$ in the anti-diagonal direction
form transversals,  i.e., $M$ is an MCPQLS$(v)$.

\begin{corollary} \label{MCPDMQLS}
       If there exists a strong complete mapping of \(\mathbb{Z}_v\) satisfying the condition $(*)$, then there  exists an MCPQLS$(v)$.
\end{corollary}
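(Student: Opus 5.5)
The plan is to take the array $M=(\ket{\phi_{i,j}})_{0\leq i,j\leq v-1}$ from the proof of Theorem~\ref{Construction--complete mapping}, with $\omega=e^{2\pi\mathrm{i}/v}$ and $\mu$ the given strong complete mapping satisfying $(*)$, and show that it is already an MCPQLS$(v)$. By Theorem~\ref{Construction--complete mapping}, $M$ is a QLS$(v)$ of maximum cardinality. So only the pandiagonal property needs checking: every broken diagonal in both directions must be a transversal.

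For the main-diagonal direction, fix $s\in\mathbb{Z}_v$. I would consider the $v$ cells $(i,i+s)$ with $i\in\mathbb{Z}_v$, indices mod $v$. These cells meet each row and each column exactly once. For $i\neq i'$ the inner product is
$$\langle \phi_{i,i+s}\mid\phi_{i',i'+s}\rangle=\frac1v\sum_{t}\omega^{(i'-i)(t+\mu(t))}.$$
Here the terms $\omega^{(i'-i)s\cdot t}$ cancel because $s$ appears with the same coefficient in both exponents. Since $\mu+\mathrm{id}$ is a permutation of $\mathbb{Z}_v$, as $t$ runs over $\mathbb{Z}_v$ so does $u=t+\mu(t)$. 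The sum therefore becomes $\sum_u\omega^{(i'-i)u}$. This vanishes because $\omega^{i'-i}\neq1$ is a $v$-th root of unity, so the geometric sum over a full residue system is zero. Thus each broken diagonal is an orthonormal basis.

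For the anti-diagonal direction, I take the cells $(i,s-i)$ for $s\in\mathbb{Z}_v$. The anti-diagonal itself is the case $s=v-1$. Again the $s$-dependence cancels, giving
$$\frac1v\sum_t\omega^{(i'-i)(\mu(t)-t)}.$$
This vanishes because $\mu-\mathrm{id}$ is a permutation, which is exactly the orthomorphism part of "strong". Both directions therefore give transversals, and $M$ is an MCPQLS$(v)$.

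There is no real obstacle; the only care needed is the index bookkeeping. Broken diagonals must be read modulo $v$, and that is legitimate because the exponent $tj$ depends on $j$ only mod $v$: $\omega^{tv}=1$, so $\ket{\phi_{i,j}}$ is well defined for $j\in\mathbb{Z}_v$. Maximum cardinality is inherited directly from Theorem~\ref{Construction--complete mapping}, so condition $(*)$ is used only through that theorem.
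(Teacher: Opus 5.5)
Your proposal is correct and is essentially the paper's own argument: the same array $M$ from Theorem~\ref{Construction--complete mapping} is used, the broken diagonals $\{(i,i+s)\}$ are handled by $\mu+\mathrm{id}$ being a permutation, the broken anti-diagonals by $\mu-\mathrm{id}$ being a permutation, and maximum cardinality is inherited from that theorem. One small wording fix: in the main-diagonal computation, what cancels between the two exponents is the factor $\omega^{st}$, leaving $t(i'-i)$, not a factor $\omega^{(i'-i)st}$.
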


For convenience, in the concrete constructions that follows, for the condition $(*)$ that the complete mapping $\mu$ needs to satisfy, we further restrict $s$ and $t$ to $t = 0$ and $s = 1$. For any $k \in \mathbb{Z}_v$, we write
\begin{align} \label{s1}
    f(k) = k\mu(1) - \mu(k) - (k-1)\mu(0).
\end{align}
The condition $(*)$  then requires that there exists some $k_0 \in \mathbb{Z}_v \setminus \{0, 1\}$ such that $\gcd(f(k_0), v) = 1$.
When $v$ is prime, the condition further reduces to the existence of some $k_0 \in \mathbb{Z}_v \setminus \{0, 1\}$ such that $f(k_0) \not\equiv 0 \pmod{v}$.
In addition, for any $a, b \in \mathbb{R}$ with $a < b$, let $[a,b]$ denote the set of all integers in the interval $[a,b]$, and let $[a,b]_o$ and $[a,b]_e$ denote the sets of odd and even integers in $[a,b]$, respectively.

\begin{lemma} \label{MCIDQLS-odd}
       For any odd integer $v \geq 7$, there exists an idempotent  MCQLS$(v)$. 
 \end{lemma}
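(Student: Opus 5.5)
The plan is to apply the first half of Theorem~\ref{Construction--complete mapping}. It suffices to exhibit, for every odd $v\ge 7$, a complete mapping $\mu$ of $\mathbb{Z}_v$ that satisfies condition $(*)$. In the normalized form (\ref{s1}) this means: some $k_0\in\mathbb{Z}_v\setminus\{0,1\}$ has $\gcd(f(k_0),v)=1$, where $f(k)=k\mu(1)-\mu(k)-(k-1)\mu(0)$. Then the square $M=(\ket{\phi_{i,j}})$ is an idempotent MCQLS$(v)$.

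The obvious candidates fail. For odd $v$ the identity $\mu(x)=x$ is a complete mapping, since $2x$ is a permutation. More generally, any affine map $\mu(x)=ax+b$ with $a,a+1$ units is a complete mapping. But for affine $\mu$ the function $f$ vanishes identically, because $(*)$ exactly measures non-collinearity of the points $(x,\mu(x))$. So the work is to perturb an affine complete mapping locally so that it becomes genuinely non-affine while staying complete.

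Concretely, I would start from $\mu_0(x)=x$ and normalize $\mu(0)=0$, $\mu(1)=1$. Then $f(k)=k-\mu(k)$, and I need some $k_0\ge 2$ with $k_0-\mu(k_0)$ a unit mod $v$. The easiest target is $\mu(k_0)=k_0\pm1$, since $\pm1$ is a unit for every $v$; this avoids any dependence on the factorization of $v$, which matters when $3\mid v$ or $5\mid v$. I will therefore look for a small set $S\subseteq\mathbb{Z}_v\setminus\{0,1\}$ and a permutation $\sigma$ of $S$ with three properties:
\begin{enumerate}
\item[(a)] $\{x+\sigma(x): x\in S\}=\{2x:x\in S\}$, so that $\mu+\mathrm{id}$ is still a permutation;
\item[(b)] $\mu=\sigma$ on $S$ and $\mu=\mathrm{id}$ off $S$, so that $\mu$ is a permutation;
\item[(c)] some $x\in S$ has $\sigma(x)-x=\pm1$.
\end{enumerate}
A $3$-cycle or transpositions cannot satisfy (a). So I would try $S$ of size $4$ or $5$, with entries written as small integers $\pm a$ independent of $v$. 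The aim is that (a) holds as an identity in $\mathbb{Z}$, or holds once one uses $2x\equiv 2y\Rightarrow x\equiv y$ (valid since $v$ is odd). The set $S$ must fit inside $\mathbb{Z}_v\setminus\{0,1\}$ without collisions, which is where $v\ge 7$ should enter.

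If a uniform gadget of this kind is not found, I fall back to a case split. For $\gcd(v,6)=1$, a different affine base $\mu_0(x)=ax$ (for instance $a=2$, since then $a$ and $a+1=3$ are both units) leaves more room for perturbation. For $3\mid v$, I would use a piecewise map built on residue classes or on intervals $[0,\frac{v-1}{2}]$ and $[\frac{v+1}{2},v-1]$. In that case I would check the few smallest orders ($v=7,9,11,13,15$) by hand or by explicit listing of $\mu$.

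The main obstacle is step (a). I need a non-affine local modification of a complete mapping that remains complete uniformly in odd $v$, including $3\mid v$. Verifying $(*)$ afterwards is immediate once a unit value of $f$ is built in by design.
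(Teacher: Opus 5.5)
\textbf{There is a genuine gap.} Your reduction is the paper's. By Theorem~\ref{Construction--complete mapping} it suffices to find a complete mapping $\mu$ of $\mathbb{Z}_v$ and some $k_0\notin\{0,1\}$ with $f(k_0)$ a unit. Aiming for $f(k_0)=\pm1$ (or $\pm 2$) is what makes the argument independent of whether $3\mid v$. But after this reduction the entire content of the lemma is the explicit mapping, and the proposal never produces one. Step (a) is left open, and the fallback only says where you would look.

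Worse, the gadget you plan to search for cannot exist. Since $\mu+\mathrm{id}$ agrees with $x\mapsto 2x$ off $S$, bijectivity forces $(\mu+\mathrm{id})(S)=2S$ exactly, so (a) is necessary. Now suppose $S$ consists of fixed integers with $|x|<v/4$. Then every $x+\sigma(x)$ and every $2x$ lies in $(-v/2,v/2)$, so (a) is an equality of multisets of integers. Comparing sums of squares,
\[
\sum_{x\in S}\bigl(x+\sigma(x)\bigr)^2=2\sum_{x\in S}x^2+2\sum_{x\in S}x\,\sigma(x)
\qquad\text{must equal}\qquad
\sum_{x\in S}(2x)^2=4\sum_{x\in S}x^2 .
\]
This gives $\sum_{x\in S}\bigl(x-\sigma(x)\bigr)^2=0$, i.e.\ $\sigma=\mathrm{id}$. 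So for every $v>4\max_{x\in S}|x|$ no nontrivial gadget with $v$-independent small entries exists, and the cancellation $2x\equiv 2y\Rightarrow x\equiv y$ cannot help. The same computation kills your fallback of perturbing $\mu_0(x)=ax$ on a small set of small integers: writing $\sigma(x)=a\tau(x)$ with $\tau$ a permutation of $S$ forces $\tau=\mathrm{id}$. Any working $\mu$ must use the wrap-around of $x\mapsto 2x$ modulo $v$ in an essential way.

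\textbf{The missing construction.} This is what the paper supplies: a global piecewise map that fixes the lower half and shuffles roughly the upper half of $\mathbb{Z}_v$. For $v\equiv 1\pmod 4$ take
\begin{itemize}
\item $\mu(k)=k$ on $[0,\frac{v-5}{2}]$;
\item $\mu(\frac{v-3}{2})=\frac{v-1}{2}$;
\item $\mu(k)=k+2$ for even $k\in[\frac{v-1}{2},v-3]$;
\item $\mu(k)=k-2$ for odd $k\in[\frac{v+1}{2},v-2]$;
\item $\mu(v-1)=v-2$.
\end{itemize}
For $v\equiv 3\pmod 4$ use the analogous map with the parities exchanged and boundary corrections at $\frac{v-3}{2}$, $v-2$ and $v-1$.

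Doubling sends the upper half of $\mathbb{Z}_v$ onto the odd residues. Hence this parity-preserving $\pm2$ shuffle, together with the boundary corrections, keeps both $\mu$ and $\mu+\mathrm{id}$ bijective. Since $\mu(0)=0$ and $\mu(1)=1$, you get $f(v-1)=1$ (respectively $2$), a unit for every odd $v$. So no split according to $3\mid v$ and no separate check of small orders is needed.
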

\begin{proof}
By Theorem \ref{Construction--complete mapping}, we only need to construct a complete mapping of \(\mathbb{Z}_v\) satisfying the condition $(*)$. We consider two cases.

\paragraph{Case 1.} $v \equiv 1 \pmod{4}$ and $v \geq 7$. Define a mapping $\mu: \mathbb{Z}_v \rightarrow \mathbb{Z}_v$ as follows:
  \begin{align*}
                \mu(k) = \begin{cases}
                    k, & k \in [0, \frac{v-5}{2}],\\
                    k+1, & k =\frac{v-3}{2},\\
           k+2, & k \in [\frac{v-1}{2}, v-3]_e,\\
                    k-2, & k \in [\frac{v+1}{2}, v-2]_o,\\
                     k-1, & k=v-1.\\
                \end{cases}
            \end{align*}

First, we show that $\mu$ is a permutation on $\mathbb{Z}_v$. Clearly, $\mu$ is well-defined. Since $\mathbb{Z}_v$ is finite, it suffices to show that $\mu$ is injective from $\mathbb{Z}_v$ to $\mathbb{Z}_v$.
Write $K_1 =  [0, \frac{v-5}{2}],  K_2 = [\frac{v-1}{2}, v-3]_e, K_3 = [\frac{v+1}{2},  v-2]_o$. Choose any $k_1, k_2\in \mathbb{Z}_v$, $k_1 \neq k_2$. If $k_1, k_2\in K_i$, $i\in\{1,2,3\}$, then $\mu (k_1)=k_1 + a_i\neq k_2+a_i=\mu (k_2)$, where $a_1=0,a_2=2,a_3=-2$. 
 If $k_1\in K_1$, $k_2\geq \frac{v-3}{2}$, then $\mu (k_1)=k_1\leq \frac{v-5}{2}<\frac{v-3}{2}\leq \mu (k_2)$.  If $k_1 = \frac{v-3}{2}$ and $k_2 \geq \frac{v-1}{2}$, then when $k_2 = \frac{v+1}{2}$, we have $\mu(k_2) = \frac{v-3}{2} \neq \mu(k_1)$; while when $k_2 \neq \frac{v+1}{2}$, $\mu(k_2) \geq \frac{v+1}{2} > \mu(k_1)$.
 If $k_1\in K_2, k_2\in K_3$, then $\mu(k_1)$ and $\mu(k_2)$ have opposite parity, hence they are not equal. If $k_2 = v-1$, note that $\mu(k_2) = v-2$ is odd. If $k_1 \in K_2$, then $\mu(k_1) = k_1 + 2$ is even, so $\mu(k_1) \neq \mu(k_2)$. If $k_1 \in K_3$, then $\mu(k_1) \leq v-4$, and thus $\mu(k_1) \neq \mu(k_2)$.

Next we show $\mu+\mathrm{id}$ is a permutation on $\mathbb{Z}_v$, where
\begin{align*}
               (\mu+\mathrm{id})(k) = \begin{cases}
                    2k, & k \in [0, \frac{v-5}{2}],\\
                   2 k+1, &k =\frac{v-3}{2},\\
                    2k+2, & k \in [\frac{v-1}{2}, v-3]_e,\\
                    2k-2, & k \in [\frac{v+1}{2}, v-2]_o,\\ 
                    2 k-1, & k =v-1.
                \end{cases}
            \end{align*}
Similarly, it suffices to show that $\mu + \mathrm{id}$ is injective from $\mathbb{Z}_v$ to $\mathbb{Z}_v$.
Choose any $k_1,k_2\in \mathbb{Z}_v$, $k_1 \neq k_2$. If $k_1, k_2\in K_i$, $i\in\{1,2,3\}$, then $(\mu+\mathrm{id})  (k_1)=2(k_1+b_i) \neq 2(k_2+b_i)=(\mu+\mathrm{id})  (k_2)$ with $b_1=0,b_2=1,b_3=-1$ as $(2,v)=1$. 
If $k_1\in K_1$, $k_2\geq \frac{v-3}{2}$, then when $k_2=\frac{v-3}{2}$ or $v-1$, we have $(\mu+\mathrm{id})  (k_1)\leq v-5\leq v-3\leq(\mu+\mathrm{id})  (k_2)$; while when $k_2\in K_2\cup K_3$, $(\mu+\mathrm{id})  (k_1)=2k_1\equiv(\mu+\mathrm{id})  (k_2)=2k_2\pm 2 \pmod{v}$ would lead
$k_1\equiv k_2\pm 1  \pmod{v}$, which is impossible. 
If $k_1=\frac{v-3}{2}$ and $k_2=v-1$, then clearly $(\pi+\mathrm{id})(k_1)=v-2\neq(\pi+\mathrm{id})(k_2)=v-3$; moreover, if $k_2\in K_2\cup K_3$, to have $(\pi+\mathrm{id})(k_1)=v-2 \equiv 2k_2\pm 2 = (\pi+\mathrm{id})(k_2) \pmod{v}$, then it must be that when $k_2\in K_2$, $k_2+1\equiv -1\pmod{v}$, and when $k_2\in K_3$, $k_2\equiv 0\pmod{v}$, which is also impossible.
Similarly, if $k_1 \in K_2 \cup K_3$ and $k_2 = v-1$, then the congruence $(\mu + \mathrm{id})(k_1) \equiv (\mu + \mathrm{id})(k_2) \pmod{v}$ would require either $k_1 + 1 \equiv \frac{v-3}{2} \pmod{v}$ (when $k_1 \in K_2$) or $k_1 - 1 \equiv \frac{v-3}{2} \pmod{v}$ (when $k_1 \in K_3$), neither of which is possible.
 And if $k_1\in K_2$, $k_2\in  K_3$, then for $(\mu+\mathrm{id})  (k_1)=2k_1+2\equiv (\mu+\mathrm{id}) (k_2)=2k_2-2 \pmod{v}$ to hold, we must have $k_1+1=k_2-1$ in $Z_v$, which is also impossible. 

Finally, take
$k_0 = v-1$. Since
\begin{align*}
    f(k_0) = k_0\mu(1) - \mu(k_0) - (k_0-1)\mu(0) = v-1 - (v-2) - 0 = 1,
\end{align*}
and $\gcd(v, f(k_0)) = 1$, it follows that $\mu$ satisfies condition $(*)$.
\paragraph{Case 2.} $v \equiv 3 \pmod{4}$ and $v \geq 7$. Define a mapping $\pi: \mathbb{Z}_v \rightarrow \mathbb{Z}_v$ as follows:
              \begin{align*}
                \mu(k) = \begin{cases}
                    k, & k \in [0, \frac{v-5}{2}],\\
                    k+1, & k \in \{\frac{v-3}{2}, v-2\},\\
                    k+2, & k \in [\frac{v-1}{2}, v-4]_o,\\
                    k-2, & k \in [\frac{v+1}{2}, v-3]_e \cup \{v-1\}.
                \end{cases}
            \end{align*}
 Similar to the previous case, one can verify that both $\mu$ and $\mu + \mathrm{id}$ are injective maps from $\mathbb{Z}_v$ to $\mathbb{Z}_v$.
take
$k_0 = v-1$. Since
\begin{align*}
    f(k_0) = k_0\mu(1) - \mu(k_0) - (k_0-1)\mu(0) = v-1 - (v-3) - 0 = 2,
\end{align*}
and $\gcd(f(k_0), v) = 1$, it follows that $\mu$ satisfies condition $(*)$.\qed
\end{proof}

By the discussion before  Corllary \ref{MCPDMQLS}, we also have the following conclusion.
\begin{lemma} \label{MCPDQLS-odd}
       For any odd integer $v \geq 7$, there exists an $\mathrm{MCQLS}(v)$ having $n$ disjoint transversals.
 \end{lemma}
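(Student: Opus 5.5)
The plan is to reuse, without change, the array $M=(\ket{\phi_{i,j}})$ built in the proof of Theorem~\ref{Construction--complete mapping}, taking $\mu$ to be the complete mapping of $\mathbb{Z}_v$ from the proof of Lemma~\ref{MCIDQLS-odd}. We read the statement with $n=v$, i.e.\ as asking for $v$ disjoint transversals. That proof already gives three facts: $M$ is a QLS$(v)$, $M$ has maximum cardinality (because $\mu$ satisfies condition $(*)$, with $k_0=v-1$ in both cases), and $\mu+\mathrm{id}$ is a permutation of $\mathbb{Z}_v$. So $M$ is an MCQLS$(v)$ for every odd $v\ge 7$, and all that remains is to exhibit the transversals.

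For the transversals, take the broken diagonals in the main-diagonal direction,
\[
T_s=\{\ket{\phi_{i,j}} \mid i-j\equiv s \pmod v\},\qquad s\in\mathbb{Z}_v .
\]
By the computation given just before Corollary~\ref{MCPDMQLS}, for $i\neq i'$ the inner product of $\ket{\phi_{i,i+s}}$ and $\ket{\phi_{i',i'+s}}$ reduces to $\frac1v\sum_{t}\omega^{(i'-i)(\mu(t)+t)}$. This sum vanishes because $\mu+\mathrm{id}$ permutes $\mathbb{Z}_v$ and $0<|i'-i|<v$. That computation uses only the complete-mapping property; it does not need $\mu$ to be an orthomorphism. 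Each $T_s$ consists of $v$ cells with exactly one cell in each row and each column, and its $v$ vectors are pairwise orthogonal unit vectors, so they form an orthonormal basis of $\mathcal{H}_v$. Hence each $T_s$ is a transversal.

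Disjointness is immediate: the $v$ classes $i-j\equiv s$, $s\in\mathbb{Z}_v$, partition the $v^2$ cells. So $T_0,\dots,T_{v-1}$ are $v$ pairwise disjoint transversals, and $T_0$ is the main diagonal, which recovers the idempotent property as well.

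There is no real obstacle here. The only point to watch is that the transversal argument does not tacitly use the strong (orthomorphism) property, since Lemma~\ref{MCIDQLS-odd} supplies only a complete mapping. The computation above confirms that only $\mu+\mathrm{id}$ enters, so the argument goes through.
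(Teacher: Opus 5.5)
Your proposal is correct and follows the paper's own argument. The paper proves this lemma by pointing to the discussion before Corollary~\ref{MCPDMQLS}: take the array $M$ of Theorem~\ref{Construction--complete mapping} with the complete mapping of Lemma~\ref{MCIDQLS-odd}; the $v$ broken diagonals $T_s$ are then pairwise disjoint transversals because $\mu+\mathrm{id}$ is a permutation, which, as you rightly note, needs only the complete-mapping property. Two small points are harmless: reading $n$ as $v$ is the intended meaning, and the sign mismatch between $i-j\equiv s$ and the cells $(i,i+s)$, which you inherit from the paper, does not matter because $s$ cancels in the inner product.
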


For a prime power $q$, Let $\mathbb{F}_q$ denote  the finite field containing $q$ elements. A polynomial $g\in \mathbb{F}_q[x]$ is a permutation polynomial of
if the function $g:c\mapsto f(c)$ from $\mathbb{F}_q$ to  itself induces a permutation. We will attempt to obtain strong complete mappings satisfying the  condition $(*)$ on cyclic groups by starting from permutation polynomials.    The following two conclusions are needed, one of which involves cyclotomic classes and multiplicative characters in finite fields. Here, fix a prime power $q \equiv 1 \pmod{e}$ and a primitive element $\omega \in \mathbb{F}_q$. Let $C_{0}^e$ denote the multiplicative subgroup
$\{\omega^{ie} \mid 0 \leq i < \frac{q-1}{e}\}$ of $\mathbb{F}_q$, and let $C_{j}^e = \omega^j C_{0}^e$ denote the coset of $C_{0}^e$
in $\mathbb{F}_q^* = \mathbb{F}_q \setminus \{0\}$.

\begin{lemma}\label{permutation polynomial-2}\cite{Lidl-Rudolf-Harald Niederreiter94}
For any prime power $q$,  $g(x)=ax^{k}$ is a  permutation polynomial of $GF(q)$ if and only if $a\neq 0$ and $\gcd(k,q-1)=1$.

\end{lemma}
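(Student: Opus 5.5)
The statement to prove is the classical monomial criterion: for a prime power $q$, $g(x)=ax^{k}$ permutes $GF(q)$ if and only if $a\neq 0$ and $\gcd(k,q-1)=1$. Since $\mathbb{F}_q$ is finite, being a permutation is the same as being injective, or equivalently surjective. I will treat the multiplicative group $\mathbb{F}_q^*$ separately from $0$ and reduce everything to a statement about the power map on a cyclic group of order $q-1$.

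For sufficiency, assume $a\neq 0$ and $\gcd(k,q-1)=1$. Then $g(0)=0$, and for $c\neq 0$ we have $g(c)=ac^k\neq 0$, so $g$ maps $\mathbb{F}_q^*$ into itself. It remains to show that $x\mapsto x^k$ is injective on $\mathbb{F}_q^*$; multiplying by the unit $a$ then preserves injectivity.
\begin{itemize}
\item Bezout gives integers $u,w$ with $uk+w(q-1)=1$.
\item Since every $c\in\mathbb{F}_q^*$ satisfies $c^{q-1}=1$, we get $(c^k)^u=c^{1-w(q-1)}=c$.
\item So $x\mapsto x^k$ has a left inverse on $\mathbb{F}_q^*$ and is therefore a bijection of $\mathbb{F}_q^*$.
\end{itemize}
Hence $g$ is a bijection of $\mathbb{F}_q$.

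For necessity, if $a=0$ then $g$ is the zero map, which is not a permutation since $q\ge 2$. So assume $a\neq 0$ and $d=\gcd(k,q-1)>1$.
\begin{itemize}
\item Let $\omega$ be a primitive element of $\mathbb{F}_q$ and put $c=\omega^{(q-1)/d}$. Then $c\neq 1$ because $(q-1)/d<q-1$.
\item We compute $c^k=\omega^{(q-1)(k/d)}=1=1^k$, so $g(c)=g(1)$ with $c\neq 1$.
\item Thus $g$ is not injective and cannot be a permutation.
\end{itemize}

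The only substantive ingredient is that $\mathbb{F}_q^*$ is cyclic, which is used to produce $\omega$. This is standard, and the paper already fixes a primitive element $\omega$ in the preceding paragraph. The one point that needs care is the degenerate case $k=0$. Here $\gcd(0,q-1)=q-1$, which exceeds $1$ except when $q=2$. In that case $g$ is constant, and the necessity argument above already covers it. I do not expect any real obstacle; this lemma is quoted from \cite{Lidl-Rudolf-Harald Niederreiter94}, and the argument above is the standard one.
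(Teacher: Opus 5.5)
Your argument is correct for exponents $k\ge 1$, and it is the standard one. The paper gives no proof of its own; it cites Lidl--Niederreiter, where the monomial criterion is likewise deduced from the cyclicity of $\mathbb{F}_q^*$, and the factor $a\neq 0$ is handled just as you do.

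Your closing remark about $k=0$ is wrong, however. When $q=2$ we have $\gcd(0,q-1)=\gcd(0,1)=1$. So this case falls under the sufficiency direction, not necessity, and the necessity argument does not apply since $d=1$. In the sufficiency argument, your step $g(0)=0$ fails there, because $0^0=1$ gives $g(0)=a$.

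In fact $g(x)=a=1$ is constant on $GF(2)$, so the statement as written is false for $(q,k)=(2,0)$. You should explicitly assume $k$ is a positive integer, as the cited source does. This is also the only case the paper needs, since it applies the lemma only with $k=(p+1)/2$.
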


\begin{lemma}\label{permutation polynomial}\cite{Lidl-Rudolf-Harald Niederreiter94}
For odd prime power $q$,  $g(x)=x^{\frac{q+1}{2}}+ax$ is a  permutation polynomial of $\mathbb{F}_q$ if and only if $a^2-1$ is a nonzero square in $\mathbb{F}_q$.

\end{lemma}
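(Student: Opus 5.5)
The plan is to rewrite $g$ using the quadratic character of $\mathbb{F}_q$. Let $\eta$ denote this character, extended by $\eta(0)=0$. Since $x^{\frac{q-1}{2}}=\eta(x)$ for $x\in\mathbb{F}_q$, we have
\[
g(x)=x^{\frac{q+1}{2}}+ax=x\bigl(\eta(x)+a\bigr).
\]
Hence $g(0)=0$, $g(x)=(1+a)x$ for $x\in C_0^2$ (the nonzero squares), and $g(x)=(a-1)x$ for $x\in C_1^2$ (the nonsquares). So $g$ acts as a scalar multiplication on each of the two cyclotomic classes of order $2$. The proof then reduces to deciding when these two scalar maps, together with $0\mapsto 0$, glue into a bijection.

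\textbf{Necessity of $a\neq\pm1$.} If $a=-1$, then $g$ vanishes on all of $C_0^2$. If $a=1$, then $g$ vanishes on all of $C_1^2$. Each class is nonempty because $q$ is odd, so in either case some nonzero element has the same image as $0$ and $g$ is not injective. Thus $a^2-1\neq 0$ is necessary.

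\textbf{The case $a\neq\pm1$.} Multiplication by the nonzero scalar $1+a$ maps the coset $C_0^2$ bijectively onto the coset $(1+a)C_0^2$. Likewise, multiplication by $a-1$ maps $C_1^2$ bijectively onto $(a-1)C_1^2$. Both images are cosets of $C_0^2$ in $\mathbb{F}_q^*$, and $0$ is fixed. Therefore $g$ is a permutation if and only if these two image cosets are different, i.e. together they cover $\mathbb{F}_q^*$.

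\textbf{Comparing the two cosets.} The coset $(1+a)C_0^2$ is the class with character value $\eta(1+a)$. The coset $(a-1)C_1^2$ is the class with character value $\eta(a-1)\eta(\omega)=-\eta(a-1)$. The two cosets are distinct exactly when $\eta(1+a)\neq-\eta(a-1)$. Since both values lie in $\{\pm1\}$, this is equivalent to $\eta(a+1)=\eta(a-1)$, which by multiplicativity is equivalent to $\eta(a^2-1)=1$, i.e. $a^2-1$ is a nonzero square.

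There is no real obstacle here. The only point requiring care is the degenerate case $a=\pm1$, which is handled separately above so that the coset argument, which needs the scalars $1+a$ and $a-1$ to be nonzero, is applied only when $a^2-1\neq 0$.
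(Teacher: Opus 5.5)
Your proof is correct and complete. The identity $g(x)=x(\eta(x)+a)$ (reading the values $\pm1$ of $\eta$ in $\mathbb{F}_q$, which is harmless since $q$ is odd), the separate treatment of $a=\pm1$, and the comparison of the image cosets $(1+a)C_0^2$ and $(a-1)C_1^2=(a-1)\omega C_0^2$ via $\eta(\omega)=-1$ together give exactly the stated equivalence.

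The paper gives no proof of this lemma and only cites Lidl--Niederreiter. Your argument, which splits $g$ by the quadratic character into two scalar multiplications on the squares and the non-squares, is essentially the standard proof of this result.
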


\begin{lemma}\label{Chang-Ji}
\cite{Chang-Ji 2004}
Let $p$ be a prime $ \equiv 1\ \pmod{e}$  with  $N_s(e,p) \triangleq p - [\sum\limits_{i=0}^{s-2} \binom{s}{i}(s-1-i)(e-1)^{s-i}]\sqrt{p} - se^{s-1} > 0$.
       Then, for any given $s$-tuple $(j_1, j_2, \dots, j_s) \in \{0,1,\dots,e-1\}^s$ and any given $s$-tuple $(c_1, c_2, \dots, c_s)$ of pairwise distinct elements of $\mathbb{F}_p$, there exists an element $x \in \mathbb{F}_p$ such that $x + c_i \in C_{j_i}^{e}$ for each $i$.
\end{lemma}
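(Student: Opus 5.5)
The plan is a standard character-sum count. Let $\chi$ be a multiplicative character of $\mathbb{F}_p$ of order $e$; it exists since $e\mid p-1$. Normalize $\chi(\omega)=\zeta_e=e^{2\pi\mathrm{i}/e}$ and set $\chi(0)=0$. For $y\neq 0$, the indicator of $y\in C_j^e$ is
\[
\frac1e\sum_{k=0}^{e-1}\zeta_e^{-jk}\chi^k(y).
\]
With $\chi^0(0)$ taken to be $1$ in the expansion, this expression equals $1/e$ at $y=0$. Define
\[
N=\sum_{x\in\mathbb{F}_p}\ \prod_{i=1}^{s}\frac1e\sum_{k_i=0}^{e-1}\zeta_e^{-j_ik_i}\chi^{k_i}(x+c_i).
\]
I will show $N$ is strictly larger than the total contribution of the "bad" points $x\in\{-c_1,\dots,-c_s\}$. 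Every other $x$ contributes $1$ exactly when $x+c_i\in C^e_{j_i}$ for all $i$, and $0$ otherwise, so this gives the required $x$.

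\emph{Step 1 (expand).} Expanding the product gives
\[
N=\frac{1}{e^s}\sum_{(k_1,\dots,k_s)}\Bigl(\prod_i\zeta_e^{-j_ik_i}\Bigr)S(k_1,\dots,k_s),\qquad S(k_1,\dots,k_s)=\sum_x\prod_i\chi^{k_i}(x+c_i).
\]
The all-zero tuple gives the main term $p/e^s$. Suppose a tuple has exactly $m\ge1$ nonzero entries. The $c_i$ are pairwise distinct and each $k_i$ with $1\le k_i\le e-1$ is not divisible by $e$, so the polynomial $\prod_i(x+c_i)^{k_i}$ is not a constant times an $e$-th power. Weil's bound for multiplicative character sums then gives $|S|\le (m-1)\sqrt p$, and $S=0$ outright when $m=1$. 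The factors with $k_i=0$ only change finitely many terms through the convention at $x=-c_i$; I will fold that discrepancy into Step 2.

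\emph{Step 2 (bound errors).} There are $\binom{s}{m}(e-1)^m$ tuples with exactly $m$ nonzero entries. Writing $m=s-i$, the total character-sum error is therefore at most
\[
\frac{\sqrt p}{e^s}\sum_{i=0}^{s-2}\binom{s}{i}(s-1-i)(e-1)^{s-i},
\]
which is precisely the bracketed coefficient in $N_s(e,p)$. Next take a bad point $x=-c_{i_0}$. The factor for $i_0$ equals $1/e$, and each remaining factor has absolute value at most $1$. So each bad point contributes at most $1/e$ in absolute value, and the $s$ bad points contribute at most $s/e=se^{s-1}/e^s$ in total. This is exactly the last term of $N_s(e,p)$.

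\emph{Step 3 (conclude).} The number of good $x$ is $N$ minus the bad contributions. By Steps 1 and 2 it is at least $\frac1{e^s}N_s(e,p)$. This is positive by hypothesis, and since it counts elements, at least one good $x$ exists.

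The main obstacle is Step 1. There I must make the convention $\chi^0(0)=1$ consistent between the indicator expansion and the Weil sums, and check that the Weil bound applies with constant $m-1$. The latter holds because the polynomial has $m$ distinct roots carrying exponents not divisible by $e$. I will handle the convention by defining the sums $S$ with $\chi^0(0)=1$ throughout. Then the Weil bound applies to the restricted product over the $m$ indices with nonzero exponent, and the only leftover effect is the bad-point accounting of Step 2.
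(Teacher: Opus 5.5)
The paper gives no proof of this lemma; it quotes it from Chang and Ji. Your argument is the standard Weil character-sum proof behind that result, and it is correct: expand with the convention $\chi^0(0)=1$, bound each sum with $m\ge 2$ nontrivial factors by $(m-1)\sqrt p$ (the $m=1$ sums vanish), and charge at most $1/e$ to each of the $s$ points $x=-c_i$; this gives exactly $N_s(e,p)/e^s$ as a lower bound for the number of good $x$ (using that $N$ is real, so main term minus absolute error is a valid lower bound).
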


\begin{lemma} \label{MCDQLS-odd prime} 
For any odd prime  $p\geq 13$, there exists  an MCDQLS$(p)$.
\end{lemma}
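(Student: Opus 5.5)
The plan is to apply Theorem~\ref{Construction--complete mapping}: I will build a strong complete mapping of $\mathbb{Z}_p=\mathbb{F}_p$ that satisfies condition $(*)$. Linear maps $\mu(x)=cx$ are strong complete mappings whenever $c,c\pm1\neq 0$, but they never satisfy $(*)$, since $f(k)\equiv 0$ identically. So I will take the simplest nonlinear candidate suggested by Lemma~\ref{permutation polynomial}:
\[
\mu(x)=x^{\frac{p+1}{2}}+ax,\qquad a\in\mathbb{F}_p .
\]
Then
\[
(\mu+\mathrm{id})(x)=x^{\frac{p+1}{2}}+(a+1)x,\qquad (\mu-\mathrm{id})(x)=x^{\frac{p+1}{2}}+(a-1)x .
\]
By Lemma~\ref{permutation polynomial}, all three maps are permutations if and only if the three elements
\[
a^2-1=(a-1)(a+1),\qquad (a+1)^2-1=a(a+2),\qquad (a-1)^2-1=a(a-2)
\]
are all nonzero squares in $\mathbb{F}_p$.

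A sufficient condition is that the five elements $a-2,a-1,a,a+1,a+2$ all lie in $C_0^2$, the nonzero squares. I will apply Lemma~\ref{Chang-Ji} with $e=2$, $s=5$, $(c_1,\dots,c_5)=(-2,-1,0,1,2)$ (pairwise distinct since $p\ge 13$) and all $j_i=0$. The constant is $\sum_{i=0}^{3}\binom{5}{i}(4-i)=4+15+20+10=49$, so
\[
N_5(2,p)=p-49\sqrt p-80 .
\]
This is positive once $\sqrt p>(49+\sqrt{2721})/2\approx 50.6$, i.e.\ for all primes $p\ge 2579$ or so. For these primes a suitable $a$ exists, and $a\in C_0^2$ also gives $a\neq0$.

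Next I check condition $(*)$ in the reduced form (\ref{s1}). Here $\mu(0)=0$ and $\mu(1)=1+a$. For $k\neq0$ write $\chi(k)=k^{\frac{p-1}{2}}$ for the Legendre symbol, so that
\[
f(k)=k(1+a)-k\chi(k)-ak=k\bigl(1-\chi(k)\bigr).
\]
Take $k_0$ to be any quadratic nonresidue. It is automatically distinct from $0$ and $1$, and $f(k_0)=2k_0\not\equiv0\pmod p$. Hence $(*)$ holds, and Theorem~\ref{Construction--complete mapping} yields an MCDQLS$(p)$ for every large prime $p$.

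The main obstacle is the finite range of primes $13\le p<2579$ that the character-sum bound does not cover. For these I will search directly for $a\in\mathbb{F}_p$ such that $a^2-1$, $a(a+2)$ and $a(a-2)$ are all nonzero squares; the verification of $(*)$ above works for any such $a$. This weaker condition has density about $1/8$, so I expect a solution for every prime in this range, and the check is a short computer (or table) search. If the search fails for some small prime, for instance $p=13$ or $17$, I would instead search by computer for an explicit strong complete mapping of $\mathbb{Z}_p$ satisfying $(*)$ and list it in a table.
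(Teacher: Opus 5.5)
Your treatment of large $p$ is the paper's own. You use the same polynomial $x^{\frac{p+1}{2}}+ax$, the same application of Lemma~\ref{Chang-Ji} with $e=2$, $s=5$, $(c_i)=(-2,-1,0,1,2)$, and a computer search below the cutoff. Your check of $(*)$ via $f(k)=k\bigl(1-\chi(k)\bigr)$, giving $f(k_0)=2k_0$ at any nonresidue, is correct and tidier than the paper's evaluation at a primitive element.

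The gap is in the finite range. Your expectation that a suitable $a$ exists for every prime $13\le p<2579$ is false, and it fails precisely for $p\in\{13,17,29,37\}$. For $p=13$, say, you need $a\notin\{0,\pm1,\pm2\}$. Among $a=3,\dots,10$, the value $a^2-1$ is a nonzero square only for $a=6,7$. For those, $a(a-2)\equiv 11$ (respectively $a(a+2)\equiv 11$) is a nonsquare. So your fallback is a necessity, not a contingency. As written it is only a promise to find some strong complete mapping by computer, so the lemma is not yet proved for these four primes.

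The paper closes this with an explicit second family. These four primes are $\equiv 1\pmod 4$, so $\frac{p+1}{2}$ is odd and coprime to $p-1$. Hence $\mu(x)=t\,x^{\frac{p+1}{2}}$ is a permutation by Lemma~\ref{permutation polynomial-2}. Moreover, $\mu\pm\mathrm{id}=t\bigl(x^{\frac{p+1}{2}}\pm t^{-1}x\bigr)$ are permutations by Lemma~\ref{permutation polynomial} once $t^{-2}-1$ is a nonzero square. This holds for $t=6$ when $p\in\{13,17,29\}$ and for $t=2$ when $p=37$. Your computation of $(*)$ goes through verbatim, now with $f(k)=tk\bigl(1-\chi(k)\bigr)$.

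Both families fit one description. The map sending $x\mapsto\alpha x$ on squares, $x\mapsto\beta x$ on nonsquares, and $0\mapsto 0$ is a strong complete mapping satisfying $(*)$ iff $\alpha\neq\beta$ and $\alpha\beta$, $(\alpha+1)(\beta+1)$, $(\alpha-1)(\beta-1)$ are all nonzero squares. Your family is the slice $\alpha-\beta=2$. A search over pairs $(\alpha,\beta)$ would have been the natural concrete fallback.

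Incidentally, your cutoff $2579$ is the right one. The paper's stated search range $(11,2467)$ formally leaves the primes from $2467$ to $2557$ uncovered.
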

\begin{proof}
By Theorem \ref{Construction--complete mapping}, we only need to construct a  strong complete mapping of \(\mathbb{Z}_p\) satisfying the  condition $(*)$.

Taking $e=2$, $s=5$,  $(c_1,c_2,c_3,c_4,c_5)=(-2,-1,0,1,2)$  and $(j_1,j_2,j_3,j_4,$ $j_5)=(0,0,0,0,0)$ in lemma \ref{Chang-Ji}, a simple calculation shows that when $p \geq 2559$, we have $N_s(e, p) > 0$. Then
 there exists an element $a\in \mathbb{Z}_p$ such that $a^2-1, (a+1)^2-1$ and $(a-1)^2-1$ are all nonzero square in $\mathbb{Z}_p$.
Furthermore, by Lemma \ref{permutation polynomial},  $g(x)=x^{\frac{p+1}{2}}+ax$, $h(x)=g(x)-x$
and $l(x)=g(x)+x$ are all  permutation polynomials over $\mathbb{Z}_p$, i.e., $g$ is a strong complete mapping of $\mathbb{Z}_p$. For any prime $p\in(11,2467)$ and $p\neq 13,17,29,37$,  the element $a$ satisfying the condition that $a^2-1, (a+1)^2-1$ and $(a-1)^2-1$ are all nonzero square in $\mathbb{Z}_p$ can be  found  easily  with the aid of computer. The following table lists our search results for the first $10$
values of $p$.

$$\begin{array}{c|ccccccccccc}
\hline
p&19& 23 & 31 & 41 & 43 & 47 & 53 & 59 & 61 &67\\
\hline
\omega &2 & 5 & 3 & 6 & 3 & 5 & 2 & 2 & 2 & 2 \\

a&9 & 6 & 15 & 13 & 5 & 13 & 13 & 7 & 14 &5 \\
\hline
\end{array}$$ 

Clearly, for the above permutation polynomial  $g(x)=x^{\frac{p+1}{2}}+ax$, we have that $g(0)=0$ and $g(\omega)=\omega ^{\frac{p+1}{2}}+a\omega=\omega(a-\omega) \neq \omega g(1)=\omega(a+1)$, where $\omega$ is a primitive element.
 Since $f(\omega) = \omega g(1) - g(\omega) - (\omega-1)g(0) \not\equiv 0 \pmod{p}$, $g$ satisfies the condition $(*)$ as a permutation on $\mathbb{Z}_p$.
 
For $p\in \{13,17,29,37\}$, take $t_p=6$ if $p\in \{13,17,29\}$ and take  $t_p=2$ if $p=37$, then $g(x)=t_px^{\frac{p+1}{2}}$  is a permutation polynomial of $\mathbb{Z}_p$ as
$p \equiv 1\ \pmod{4}$ and $(\frac{p+1}{2},p-1)=1$. Furthermore, we can verify that $t_p^{-2} - 1 = t_p^{-2}(1 - t_p^2) \in C_{0}^2$, and thus $g(x) \pm x = t_p(x^{\frac{p+1}{2}} \pm t_p^{-1}x)$ is also a permutation polynomial over $\mathbb{Z}_p$. Clearly, we also have $f(\omega _p) = \omega g(1) - g(\omega _p) - (\omega_ p-1)g(0) \not\equiv 0 \pmod{p}$, where $\omega _p =2$ if $p\in \{13,29,37\}$, and $\omega _p =3$ if $p=17$. Thus $g$  satisfies the condition $(*)$ as a permutation on $\mathbb{Z}_p$.\qed

\end{proof}

\begin{lemma} \label{MCDQLS-odd} 
For any odd composite integer \(v\) with \(v \geq 25\) and \(3 \nmid v\), there exists an  MCDLS$(v)$.
\end{lemma}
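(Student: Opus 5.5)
Presumably "MCDLS$(v)$" means MCDQLS$(v)$. The plan is to use Theorem~\ref{Construction--complete mapping}: it suffices to exhibit a strong complete mapping $\mu$ of $\mathbb{Z}_v$ that satisfies condition $(*)$.

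\textbf{The linear candidate.} The natural choice is a multiplier map $\mu(k)=ak$. It is a strong complete mapping exactly when $a$, $a+1$ and $a-1$ are all units in $\mathbb{Z}_v$. Since $\gcd(v,6)=1$, every prime divisor $p$ of $v$ is at least $5$. So one can pick $a\equiv 2 \pmod p$ for every $p\mid v$, i.e. $a=2$; then $a-1=1$, $a=2$ and $a+1=3$ are all coprime to $v$. However, a linear $\mu$ makes $(t-s)(\mu(k)-\mu(s))-(k-s)(\mu(t)-\mu(s))=0$ identically, so $(*)$ always fails (every entry $\ket{\phi_{i,j}}$ then depends only on $j+ai$). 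We must therefore perturb a linear map while keeping both $\mu\pm\mathrm{id}$ bijective.

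\textbf{Using the composite structure.} Write $v=mn$ with $\gcd(mn,6)=1$ and $m,n\ge 5$; the hypothesis $v\ge 25$ guarantees such a factorization. Identify $\mathbb{Z}_v$ with pairs via $k=x+my$, where $x\in\mathbb{Z}_m$ and $y\in\mathbb{Z}_n$ are lifted to $[0,m-1]$ and $[0,n-1]$. Then define
\[
\mu(x+my)=2x+m\bigl(2y+\lambda(x)\bigr) \pmod v,
\]
with $\lambda:\mathbb{Z}_m\to\mathbb{Z}_n$ to be chosen. Equivalently, $\mu(k)=2k+m\lambda(k \bmod m)$.

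Bijectivity goes in two steps. Modulo $m$, each of $\mu$, $\mu+\mathrm{id}$, $\mu-\mathrm{id}$ reduces to $k\mapsto ck$ with $c\in\{2,3,1\}$, and each of these is a bijection of $\mathbb{Z}_m$. For a fixed residue of $k$ modulo $m$, the map then acts on the $n$ lifts $k+m y$ as $y\mapsto cy+\text{const}$ modulo $n$, which is a bijection. Here we use $\mu(k+m)=\mu(k)+2m$ because $\lambda$ only depends on $k \bmod m$. So $\mu$ and $\mu\pm\mathrm{id}$ are permutations of $\mathbb{Z}_v$ for every choice of $\lambda$.

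\textbf{Verifying $(*)$.} Use the normalized form $f(k)=k\mu(1)-\mu(k)-(k-1)\mu(0)$. The linear parts cancel, leaving
\[
f(k)=m\bigl(k\lambda(1)-\lambda(k \bmod m)-(k-1)\lambda(0)\bigr).
\]
This is divisible by $m$, so $\gcd(f(k),v)\ne 1$ for $s=1,t=0$. This is the main obstacle. The general $(*)$ quantity for arbitrary $s,t,k$ is likewise $\equiv 0 \pmod m$, since $\mu$ is linear modulo $m$. So this "block-linear" construction cannot work as is, and the perturbation must also be nonlinear modulo each prime dividing $v$.

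I would handle this by perturbing the mod-$m$ layer as well: replace $2x$ by a nonlinear strong complete mapping $\nu$ of $\mathbb{Z}_m$ with $\nu(x+m)$ handled via lifts. Then apply the same lifting argument, which only needs bijectivity of $\nu,\nu\pm\mathrm{id}$ on $\mathbb{Z}_m$ and of $y\mapsto cy$ on $\mathbb{Z}_n$ with $c\in\{2,3,1\}$.

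Concretely, take $m=p$ a prime factor of $v$. If $p\ge 13$, use the $\nu$ from Lemma~\ref{MCDQLS-odd prime}, whose $(*)$ quantity is nonzero modulo $p$. For $p\in\{5,7,11\}$, find $\nu$ by direct search. Then choose $\lambda$ (and the lifts) so that the $(*)$ quantity is also nonzero modulo every other prime divisor of $v$. Since $\lambda$ is free in $\mathbb{Z}_n$, I expect a suitable choice of $\lambda(0),\lambda(1),\lambda(k_0)$ to make $f(k_0)$ a unit, possibly after using the CRT to adjust the residue. Balancing the lift carries (the "$+m$" overflow in $\nu$) so that the three bijectivity statements survive is the delicate step. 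If it fails, I would fall back on computer-found mappings for small cases and on the singular direct product of Section~3.
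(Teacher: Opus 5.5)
Your fibred ansatz, in which $\mu(k) \bmod m$ depends only on $k \bmod m$, is where the argument breaks. Bijectivity is fine: the carries you worry about depend only on $k\bmod m$, so they are harmless. But reducing $\mu$ and $\mu\pm\mathrm{id}$ modulo $m$ forces $\nu$ to be a strong complete mapping of $\mathbb{Z}_m$. Moreover, the $(*)$ quantity of $\mu$ reduced modulo $m$ is exactly that of $\nu$, and it vanishes whenever the residues coincide, so $\nu$ must be non-affine.

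For $m\in\{5,7,11\}$ no such $\nu$ exists. Strong complete mappings of $\mathbb{Z}_m$ are exactly the toroidal $m$-queens solutions, and there are $10$, $28$, $88$ of them. This equals the number $m(m-3)$ of affine maps $x\mapsto ax+b$ with $a,a\pm 1\neq 0$. This is why Lemma~\ref{MCDQLS-odd prime} starts at $p\ge 13$, and why the paper leaves MCPQLS$(7)$ and MCPQLS$(11)$ open. So your direct search returns nothing.

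Consequently no choice of $m$ works for any $v$ whose prime factors all lie in $\{5,7,11\}$. This is an infinite family beginning $25,35,49,55,77,121,125,\dots$. For $25,35,49,55,77,121$ the only nontrivial proper divisors are such primes. A composite $m$ would require a $\nu$ satisfying $(*)$ on $\mathbb{Z}_m$, i.e.\ the same problem at a smaller member of the family.

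Neither fallback closes this. No finite search covers an infinite family. The recursive constructions of Section~3 lack ingredients at these orders; for instance, $25$ via Lemma~\ref{MCDQLS-SDPC2} needs MCDQLS$(5)$, which does not exist. Your CRT adjustment of $\lambda$ could plausibly be completed when $v$ has a prime factor $\ge 13$, but that is only a subcase of the lemma.

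The missing idea is to inject nonlinearity modulo $p$ by a local rearrangement, rather than through a nonlinear map of $\mathbb{Z}_p$. The paper takes $p$ the least prime factor and writes $v=pt$. On the blocks $ap+\{0,\dots,p-1\}$, $0\le a\le t-2$, it puts $\mu(x)=2x$ or $2x-p$, both $\equiv 2x \pmod p$. It keeps $\mu((t-1)p)=2(t-1)p$. On the remaining $p-1$ points of the last block it sets $\mu(x)=2x-\frac{3p-1}{2}+3s+1$ for $x=tp-1-2s$, and $\mu(x)=2x-p+3s+3$ for $x=tp-2-2s$, where $0\le s\le\frac{p-3}{2}$. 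So $\mu \bmod p$ is not a function of $x\bmod p$.

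Bijectivity of $\mu$ and $\mu\pm\mathrm{id}$ is then checked globally by tracking images block by block: the modified points fill exactly the residues the other blocks leave uncovered. Finally, $f(v-1)=(v-1)\mu(1)-\mu(v-1)-(v-2)\mu(0)=3\cdot\frac{p-1}{2}$. This is automatically a unit modulo $v$, because every prime divisor of $\frac{p-1}{2}$ is less than $p$ and $3\nmid v$. One uniform construction thus handles all admissible $v$, including the family your method cannot reach.
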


\begin{proof}
 By Theorem \ref{Construction--complete mapping}, we only need to construct a strong complete mapping of \(\mathbb{Z}_v\) satisfying the  condition $(*)$.

Suppose $v = pt$, where $p$ is the least prime factor of $v$ and $t$ is odd.
Let $A=\{0,p,2p,\dots,(t-1)p\}$, and $B=\{0,1,2,\dots,p-1\}$. Then for $(\mathbb{Z}_v,+)$, $B$ is a coset representative system of $A$. Divide $B$ into three parts: $B_{0}=\{0\}$, $B_{1}=\{1,2,\dots,\frac{p-1}{2}\}$ and $B_{2}=\{\frac{p+1}{2},\frac{p+3}{2},\dots,p-1\}$.
Now we  construct a mapping $\mu: \mathbb{Z}_v \rightarrow \mathbb{Z}_v$ as follows:
\begin{align*} \label{2}
            \mu(x) = \begin{cases}
        2x, & \text{if } x \in (A \setminus \{(t-1)p\})+(B_{0}\cup B_{1}), \\
        2x-p, & \text{if } x \in (A \setminus \{(t-1)p\})+B_{2}, \\
        2x, & \text{if }  x=(t-1)p, \\
        2x-\frac{3p-1}{2}+3s+1, &  \text{if } x=tp-1-2s,s \in \{0,1,\dots,\frac{p-1}{2}-1\}, \\
        2x-p+3s+3, &  \text{if } x=tp-1-2s-1, s \in \{0,1,\dots,\frac{p-1}{2}-1\}.
        \end{cases}
        \end{align*}
Clearly, $\mu$ is well-defined. Take $k_0 = v-1$ and we have
\begin{align*}
            f(k_0) &= (v-1)\mu(1) - \mu(v-1) - (v-1-1)\mu(0)\\
            &=2(pt-1) - [2(pt-1) - 3 \cdot \frac{p-1}{2}] - 0 = 3 \cdot \frac{p-1}{2}.
        \end{align*}      
        Since $p$ is the smallest prime factor of $v$, we must have $\gcd(t, \frac{p-1}{2}) = 1$ and $\gcd(v, 3) = 1$, then
        $$\gcd(v, 3 \cdot \frac{p-1}{2}) = \gcd(pt, \frac{p-1}{2}) = \gcd(t, \frac{p-1}{2}) = 1,$$
        hence $\mu$ is a mapping of \(\mathbb{Z}_v\) satisfying the  condition $(*)$. 
Note that $2 \nmid v$ and $3 \nmid v$, we have $2\mathbb{Z}_{v} = 3\mathbb{Z}_{v} = \mathbb{Z}_{v} = A + B$. Before considering the images of $\mu - \mathrm{id}$, $\mu$, and $\mu + \mathrm{id}$ on $\mathbb{Z}_v$, note that
\begin{align*}
       &\{ 2x\colon x\in \frac{3(t-1)}{2}p+B_{2}\} \cup \{{2x\colon x\in (t-1)p+B_{1}}\} \\
       =& \{ 2x-p\colon x\in (t-1)p+B_{2}\}\cup \{{2x\colon x\in (t-1)p+B_{1}}\}\\
       = &\{(t-2)p+s\colon s=1,3,\dots ,p-2\}\cup \{(t-2)p+s\colon s=2,4,\dots ,p-1\}\\
       = &(t-2)p+(B_1\cup B_2)
       \end{align*}
      and 
      \begin{align*}
        &\{ 3x\colon x\in \frac{4(t-1)}{3}p+B_{2}\}\cup \{{3x\colon x\in (t-1)p+B_{1}}\}\\
        = &\{ 3x-p\colon x\in (t-1)p+B_{2}\}\cup \{{3x\colon x\in (t-1)p+B_{1}}\}\\
        = &\{(t-4)p+3s\colon s=\frac{p+1}{2},\frac{p+3}{2},\dots, p-1 \}\cup \{(t-3)p+3s\colon s\in s=1,2,\dots,\frac{p-1}{2}\}.
      \end{align*}
Then we have the following calculation results.

      $$\begin{array}{lll}
      &\{(\mu - \mathrm{id})(x)\colon x \in \mathbb{Z}_{v}\} \\
      =&\{x\colon x \in(A \setminus \{(t-1)p)\})+(B_{0}\cup B_{1})\}\cup \{x-p\colon x \in(A \setminus \{(t-1)p\})+B_{2}\}\cup \{(t-1)p\} \cup\\
      & \{x-\frac{3p-1}{2}+3s+1\colon x=tp-1-2s, s=0,1,\dots,\frac{p-1}{2}-1\} \cup\\
      & \{x-p+3s+3\colon x=tp-1-2s-1, s=0,1,\dots,\frac{p-1}{2}-1\}\\
      =&((A \setminus \{(t-1)p\})+(B_{0}\cup B_{1})) \cup ((A \setminus \{(t-2)p\})+B_{2}) \cup \{(t-1)p\} \cup \\
      &  \{(t-2)p+\frac{p+1}{2}+s\colon s=0,1,\dots,\frac{p-1}{2}-1\} \cup \{(t-1)p+1+s\colon s=0, 1, \dots, \frac{p-1}{2}-1\}\\
      =&((A \setminus \{(t-1)p\})+(B_{0}\cup B_{1})) \cup ((A \setminus \{(t-2)\})+B_{2}) \cup \{(t-1)p\} \cup ((t-2)p+B_{2}) \cup\\
      &  ((t-1)p+B_{1}\})\\
      =&A+B\\
      =&\mathbb{Z}_{v}.
      \end{array}$$

      $$\begin{array}{lll}
       &\{\mu(x)\colon x \in \mathbb{Z}_{v}\}\\
       =&\{2x\colon x \in (A \setminus \{(t-1)p\})+(B_{0}\cup B_{1})\} \cup \{2x-p\colon x \in (A \setminus \{(t-1)p\})+B_{2}\}\cup \\
       & \{2x\colon x=(t-1)p\} \cup\{2x-\frac{3p-1}{2}+3s+1\colon x= tp-1-2s,s=0,1,\dots,\frac{p-1}{2}-1\} \cup\\
       &\{2x-p+3s+3\colon x=tp-1-2s-1, s=0,1,\dots,\frac{p-1}{2}-1\}\\
       = &2((A \setminus \{(t-1)p\})+(B_{0}\cup B_{1})) \cup \{2(x+\frac{t-1}{2}p)\colon x \in (A\setminus\{(t-1)p\})+B_{2}\} \cup\\
       & \{2(t-1)p\} \cup \{(t-2)p+\frac{p-1}{2}-s\colon s=0, 1, \dots, \frac{p-1}{2}-1\} \cup \\
       & \{(t-1)p-s-1\colon  s=0, 1,\dots, \frac{p-1}{2}-1\}\\
       =&2\{(A \setminus \{(t-2)p\})+(B_{0}\cup B_{1})\} \cup 2\{(A \setminus \{\frac{3(t-1)}{2}p\})+B_{2}\} \cup \{2(t-1)p\} \cup\\
        & \{(t-2)p+\frac{p-1}{2}-s\colon s =0,1,\dots,\frac{p-1}{2}-1\}\\
       &\cup \{(t-2)p+p-s-1\colon s=0, 1, \dots, \frac{p-1}{2}-1\}\\
      =&2\{(A \setminus \{(t-1)p\})+(B_{0}\cup B_{1})\} \cup 2\{(A \setminus\{\frac{3(t-1)}{2}p\})+B_{2}\}\cup \{2(t-1)p\} \cup\\
       & ((t-2)p+B_{1}) \cup( (t-2)p+B_{2})\\
      =&2\{(A \setminus \{(t-1)p\})+(B_{0}\cup B_{1})\} \cup 2\{(A \setminus\{\frac{3(t-1)}{2}p\})+B_{2}\}\cup \{2(t-1)p\} \cup\\
      &( (t-2)p+(B_{1} \cup B_{2}))\\
       =&2(A+B)\\
      =&2\mathbb{Z}_{v}\\
      =&\mathbb{Z}_{v}.
      \end{array}$$

      $$\begin{array}{lll}
       &\{(\mu +2\mathrm{id})(x)\colon x \in \mathbb{Z}_{v}\}\\
       =&\{3x\colon  x \in (A \setminus \{(t-1)p\})+(B_{0}\cup B_{1})\} \cup \{3x-p\colon x \in (A \setminus \{(t-1)p\})+B_{2}\}\\
       &\cup \{3(t-1)p\} \cup \{3x-\frac{3p-1}{2}+3s+1\colon x= tp-1-2s,s=0,1,\dots,\frac{p-1}{2}-1\}\\
        &\cup \{3x-p+3s+3\colon x=tp-1-2s-1, s=0,1,\dots,\frac{p-1}{2}-1\}\\
        =&3((A \setminus \{(t-1)p\})+(B_{0}\cup B_{1})) \cup \{3(x+\frac{t-1}{3}p)\colon x \in (A\setminus\{(t-1)p\})+B_{2}\}\\
        &\cup \{3(t-1)p\} \cup \{(t-2)p+\frac{p+1}{2}-3s-2\colon  s=0,1,\dots,\frac{p-1}{2}-1\}\}\\
         &\cup \{(t-1)p-3s-3\colon s=0,1,\dots,\frac{p-1}{2}-1\}\\
        =&3((A \setminus \{(t-1)p\})+(B_{0}\cup B_{1})) \cup 3((A \setminus\{\frac{4(t-1)}{3}p\})+B_{2})\\
        &\cup \{3(t-1)p\} \cup \{(t-3)p+3(\frac{p-1}{2}-s)\colon s=0,1,\dots,\frac{p-1}{2}-1\}\}\\
        &\cup \{(t-4)p+3(p-1-s)\colon s=0,1,\dots,\frac{p-1}{2}-1\}\\
        =&3((A \setminus \{(t-1)p\})+(B_{0}\cup B_{1})) \cup 3((A \setminus\{\frac{4(t-1)}{3}p\})+B_{2})\\
        &\cup \{3(t-1)p\} \cup \{(t-3)p+3s\colon  s=1,2,\dots,\frac{p-1}{2}\}\\
        &\cup \{(t-4)p+3s\colon s=\frac{p+1}{2},\frac{p+3}{2},\dots, p-1 \}\\
        =&3\mathbb{Z}_v\\
        =&\mathbb{Z}_v.
        \end{array}$$

Therefore, $\mu$ is a required strong complete mapping of \(\mathbb{Z}_v\) satisfying the  condition $(*)$. \qed
       
\end{proof}

By Corllary \ref{MCPDMQLS} and the proof of Lemmas \ref{MCDQLS-odd prime}, \ref{MCDQLS-odd}, we also have the following result.

\begin{theorem} \label{MCPDQLS-odd-1} 
    There exists an $\mathrm{MCPQLS}(v)$ if  $\gcd(n,6)=1$ and  $v\not \in \{5,7,11\}$. 

\end{theorem}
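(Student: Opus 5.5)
Since $\gcd(v,6)=1$ here, I will prove the statement by splitting $v$ into primes and composites. In both cases the goal is a strong complete mapping of $\mathbb{Z}_v$ satisfying condition $(*)$, so that Corollary~\ref{MCPDMQLS} yields an $\mathrm{MCPQLS}(v)$ directly. The $n$ in the hypothesis is read as $v$. The case $v=1$ is degenerate and will be set aside.

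\emph{Prime $v=p$.} If $p\geq 13$, the proof of Lemma~\ref{MCDQLS-odd prime} already gives what we need. It exhibits either $g(x)=x^{\frac{p+1}{2}}+ax$, with $a$ obtained from Lemma~\ref{Chang-Ji} or by the computer search, or $g(x)=t_px^{\frac{p+1}{2}}$ for $p\in\{13,17,29,37\}$. In each case $g$ is a permutation polynomial with $g\pm x$ also permutation polynomials, so $g$ is a strong complete mapping of $\mathbb{Z}_p$, and $f(\omega)\not\equiv 0\pmod p$ gives $(*)$. The primes coprime to $6$ that are below $13$ are exactly $5,7,11$, which are excluded. One point must be checked: the proof states the range as ``$p\in(11,2467)$'' while the bound from Lemma~\ref{Chang-Ji} is $p\geq 2559$. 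The primes in the gap $2467\le p<2559$ have to be covered by the same computer search; I would simply extend the search range to $p<2559$.

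\emph{Composite $v$.} Let $v$ be composite with $\gcd(v,6)=1$. Its least prime factor is then at least $5$, so $v\geq 25$. Lemma~\ref{MCDQLS-odd} covers every odd composite $v\geq 25$ with $3\nmid v$. Its proof constructs $\mu$ such that $\mu-\mathrm{id}$, $\mu$ and $\mu+2\,\mathrm{id}$ are permutations of $\mathbb{Z}_v$, and such that $f(v-1)=3\cdot\frac{p-1}{2}$ is coprime to $v$. Define $\nu=\mu-\mathrm{id}$. Then $\nu$, $\nu+\mathrm{id}=\mu$ and $\nu-\mathrm{id}=\mu-2\,\mathrm{id}$ must all be permutations. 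This last map is not what the lemma checks: the lemma establishes $\mu+2\,\mathrm{id}$, not $\mu-2\,\mathrm{id}$. So the reparametrization needs adjusting. I will instead read the verified triple $\{\mu-\mathrm{id},\mu,\mu+2\,\mathrm{id}\}$ as saying that $\mu-\mathrm{id}$ is a permutation $\rho$ with $\rho+\mathrm{id}$ and $\rho+3\,\mathrm{id}$ also permutations. Since $3$ is invertible modulo $v$, the map $x\mapsto 3x$ and an affine change of the form $\nu=\alpha\rho+\beta\,\mathrm{id}$ can be used to move the set of coefficients $\{0,1,3\}$ to $\{-1,0,1\}$. If no such affine normalization works, I will fall back on checking directly, with the same coset decomposition $A+B$ used in the proof of Lemma~\ref{MCDQLS-odd}, that the constructed $\mu$ is a strong complete mapping.

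For $(*)$, the transformed mapping changes $f$ only through linear terms. The determinant in $(*)$ is bilinear in the pairs $(k-s,\ \mu(k)-\mu(s))$. Hence an invertible linear change $(x,y)\mapsto(x,\alpha y+\beta x)$ with $\gcd(\alpha,v)=1$ multiplies it by $\alpha$, and coprimality to $v$ is preserved. This yields $(*)$ for the new mapping.

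\emph{Main obstacle.} The delicate step is this bookkeeping: confirming that the composite-case mapping really is (or can be normalized to) a strong complete mapping, since the verified triple $\mu-\mathrm{id},\mu,\mu+2\,\mathrm{id}$ does not match $\mu,\mu\pm\mathrm{id}$ on the nose. Once that is settled, Corollary~\ref{MCPDMQLS} finishes the proof.
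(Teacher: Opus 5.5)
Your skeleton matches the paper's. Its proof is just Corollary~\ref{MCPDMQLS} applied to the mappings built in the proofs of Lemmas~\ref{MCDQLS-odd prime} and~\ref{MCDQLS-odd}. You are right that the computer search must also cover the primes $2467\le p<2559$, where Lemma~\ref{Chang-Ji} does not yet apply; that is a fair criticism of the write-up. The gap is in the composite case, which you leave open, and the repair you propose first cannot succeed. For $\nu=\alpha\rho+\beta\,\mathrm{id}$ you need $\nu+c\,\mathrm{id}=\alpha\bigl(\rho+\alpha^{-1}(\beta+c)\,\mathrm{id}\bigr)$ to be a permutation for $c=-1,0,1$. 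To get this from the three facts you have, you need $\alpha^{-1}(\beta+c)\in\{0,1,3\}$ for all three values of $c$. These three values are distinct and in arithmetic progression, so $\{0,1,3\}$ would have to be a three-term progression modulo $v$. Depending on which element is the middle term, that requires $0\equiv 4$, $2\equiv 3$ or $6\equiv 1 \pmod v$, i.e.\ $v\mid 4$, $v\mid 1$ or $v\mid 5$, all impossible here. Precomposing with $x\mapsto\gamma x$ only rescales the coefficients and does not help. Worse, your premise is false. For $v=25$ ($p=t=5$) the map of Lemma~\ref{MCDQLS-odd} has $\mu(15)=5$ and $\mu(21)=18$, so $\mu(15)+30\equiv 10\equiv \mu(21)+42 \pmod{25}$, and $\mu+2\,\mathrm{id}$ is not a permutation.

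What you missed is that the third display in that proof is simply mislabeled. Its entries are $3x$, $3x-p$, $3(t-1)p$, $3x-\frac{3p-1}{2}+3s+1$ and $3x-p+3s+3$. On the corresponding pieces $\mu(x)$ equals $2x$, $2x-p$, $2(t-1)p$, $2x-\frac{3p-1}{2}+3s+1$ and $2x-p+3s+3$, so these entries are exactly $\mu(x)+x$. The proof therefore verifies $\mu-\mathrm{id}$, $\mu$ and $\mu+\mathrm{id}$. This is what its first and last sentences announce, and what Theorem~\ref{Construction--complete mapping} needs for the MCDQLS conclusion anyway. In your notation the verified coefficient set is $\{0,1,2\}$, which is a progression, so $\nu=\rho+\mathrm{id}=\mu$ works unchanged. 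Hence $\mu$ is already a strong complete mapping. Condition $(*)$ follows directly because $f(v-1)=3\cdot\frac{p-1}{2}$ is coprime to $v$, so your bilinearity argument for transporting $(*)$ is not needed. Your ``fallback'' direct check is thus already in the paper, and Corollary~\ref{MCPDMQLS} closes the proof exactly as the paper does.
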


\section{Recursive Constructions}
In this section, we introduce a singular direct product construction for MCDQLSs, which can be seen as a generalization of Theorem~3.4 in Ref.~\cite{Cao-July}; both this method and the theorem generalize the singular direct product construction for classical Latin squares.

\subsection{Singular direct product constructions}
We introduce the following definitions and notations, which will be used throughout the rest of this paper.

 An incomplete quantum Latin square, denoted by IQLS\( (m+h, h) \), is an \( (m+h) \times (m+h) \) square  whose entry  in the \( h \times h \) subblock at its lower right corner   is empty, satisfying the following conditions:
\begin{enumerate}
    \item[(1)] Each nonempty entry is a unit column vector from \( (m+h) \)-dimensional Hilbert space \( \mathcal{H}_{m+h} \);

    \item[(2)]  For \( i \in \{0, 1, \dots, m-1 \} \), the entries in the \( i \)-th row (column) form an orthonormal basis of \( \mathcal{H}_{m+h} \);

    \item[(3)]  For \( j \in \{m, m+1, \dots, m+h-1 \} \), all the nonempty entries in the \( j \)-th row (column) form an orthonormal basis of the space generating by  the computational vectors \( \ket{0}, \ket{1}, \dots, \ket{m-1} \in \mathcal{H}_{m+h}\).
\end{enumerate}
Here, if $h = 0$, then $\operatorname{IQLS}(m+h, h)$ reduces to a $\operatorname{QLS}(m)$.
Furthermore, we define the cardinality \( c \) of an IQLS(\( m+h, h \)) to be the number of distinct vectors (up to a global phase) in the array. Clearly, the cardinality satisfies \( n \leq c \leq (m+h)^2 - h^2 \). If \( c = (m+h)^2 - h^2 \), then the array is an IQLS\((m+h, h)\) of maximum cardinality, denoted by \(\mathrm{MCIQLS}(m+h, h)\).

The $\operatorname{IQLS}(m+h, h)$ is the quantum generalization of the classical incomplete Latin square $\operatorname{ILS}(m+h, h)$. The latter is an $(n+h) \times (n+h)$ array with the $h \times h$ subblock at its lower right corner left empty, such that every symbol occurs at most once in each row and each column, and no symbol from an $h$-subset of the symbol set occurs in the last $h$ rows nor in the last $h$ columns.
For two MCQLS$(m+h,h)$s $A$ and $B$, if every entry in $A$ is distinct from every entry in $B$, then we say that $A$ and $B$ are disjoint. In addition, we also need to consider the disjointness between different MCIQLSs.

For any vector $\ket{c}=(c_0, c_1, \dots, c_{m+h-1})^{\top}\in \mathcal{H}_{m+h}$, define 
$\ket{c_{[0,m-1]}} = (c_0, c_1, \dots, c_{m-1})^{\top}$ as the projection onto the first $m$ components of $\ket{c}$, and 
$\ket{c_{[m,m+h-1]}} = (c_m, c_{m+1}, \dots, c_{m+h-1})^{\top}$ as the projection onto the last $h$ components.
Then given a positive integer $m$ and nonnegative integers $m_p, m_q$, for two unit vectors 
$\ket{a} \in \mathcal{H}_{m+m_p}$ and $\ket{b} \in \mathcal{H}_{m+m_q}$ , if the unit vectors
$$\begin{pmatrix}
              \ket{a_{[0,m-1]}} \\ \ket{a_{[m,m+m_p-1]}}\\ \mathbf{0}_{m_q}
          \end{pmatrix}, \  \begin{pmatrix}
           \ket{b_{[0,m-1]}} \\ \mathbf{0}_{m_p}    \\ \ket{b_{[m,m+m_q-1]}}
          \end{pmatrix}$$ are not distinct  (up to a global phase),
        then $\ket{a}$ and $\ket{b}$ are said to be $m$-distinct.
 Under this definition, if $m_p = m_q=0$, then the distinctness of $\ket{a}$ and $\ket{b}$ is equivalent to their $m$-distinctness. If $m_p = 0$ and $\ket{b_{[m,m+m_q-1]}}=\mathbf{0}_{m_q}$ , then $\ket{a}$ and $\ket{b}$ being $m$-distinct implies that the projection of $\ket{b}$ onto its first $m$ components is distinct from $\ket{a}$. 
 Moreover, when $m_p = m_q$, the distinctness of $\ket{a}$ and $\ket{b}$ clearly implies that $\ket{a}$ and $\ket{b}$ are $m$-distinct. Two $\mathrm{IQLS}(m+h, h)$'s are said to be disjoint if every nonzero entry of one is distinct from every nonzero entry of the other. And one can also define the notion of $m$-disjointness between a quantum Latin square and an incomplete quantum Latin square, as well as between different incomplete quantum Latin squares.
For example, an IQLS$(m+h, h)$ and a QLS$(m)$ are said to be  $m$-disjoint if every nonempty entry in the IQLS$(m+m_p, m_p)$ is $m$-distinct from every entry in the QLS$(m)$. 
Similarly, one can also speak of an IQLS$(m + m_p,m_p)$ and an IQLS$(m + m_q,m_q)$ being $m$-disjoint.

Let $m$ be $n$ positive integer, and let $w, m_1, \dots, m_p, \dots, m_u$ be nonnegative  integers satisfying $w = \displaystyle\sum_{p=1}^{u}m_p$.  
Consider the following quantum state vectors:
\[
\ket{a} =
\begin{pmatrix}
a_0 \\ a_1 \\ \vdots \\ a_{n-1}
\end{pmatrix}
\in \mathcal{H}_n,
\quad
\ket{b} =
\begin{pmatrix}
b_0 \\ b_1 \\ \vdots \\ b_{m-1}
\end{pmatrix}
\in \mathcal{H}_m,
\quad
\ket{c} =
\begin{pmatrix}
c_0 \\ c_1 \\ \vdots \\ c_{m-1}\\ c_{m}\\ \vdots \\ c_{m+m_p-1}
\end{pmatrix}
\in \mathcal{H}_{m+m_p}\ \ (1\leq  p \leq u).
\]
   Define the extended tensor product operation $\otimes_{+}$ as
    \[
    \ket{a} \otimes_{+} \ket{b} =
    \begin{pmatrix}
    \ket{a} \otimes \ket{b} \\ \mathbf{0}_w
    \end{pmatrix}
    \in \mathcal{H}_{mn+w}.
    \]
 And define the operation $\otimes_{\widehat{p}}$ as
    \[
    \ket{a} \otimes_{\widehat{p}} \ket{c} =
    \begin{pmatrix}
    \ket{a} \otimes \ket{c_{[0,m-1]}} \\ \mathbf{0}_{m_{1}}\\ \vdots \\ \mathbf{0}_{m_{p-1}} \\ \ket{c_{[m,m+m_p-1]}}\\ \mathbf{0}_{m_{p+1}} \\ \vdots \\ \mathbf{0}_{m_{u}}
    \end{pmatrix}
    \in \mathcal{H}_{mn+w}.
    \]

Since $\ket{a}$, $\ket{b}$ and $\ket{c}$ are all unit vectors, we have
\[
\|\ket{a} \otimes_{+} \ket{b}\| = \|\ket{a}\| \cdot \|\ket{b}\| = 1,
\]
and
\[
\|\ket{a} \otimes_{\widehat{p}} \ket{c}\| = \sqrt{\|\ket{a}\|^2 \cdot \|\ket{c_{[0,m-1]}}\|^2 + \|\ket{c_{[m,m+m_p-1]}}\|^2} = \|\ket{c}\| = 1.
\]
Thus, $\ket{a} \otimes_{+} \ket{b}$ and $\ket{a} \otimes_{\widehat{p}} \ket{c}$ are both unit vectors.

To facilitate the construction methods described below, 
consider  an MCIQLS$(m+m_p,m_p)$ $C_p = (\ket{c_{p,s,t}})$,  $p=1,2,\dots,u$,  and an  MCQLS$(w)$ $D = (\ket{d_{e,f}})$.  
If, for every nonempty entry $\ket{c_{p,s,t}}$ in $C_p$,  and every  entry $ \ket{d_{e,f}}$ in $D$, 
$$\begin{pmatrix}
        \ket{c_{p,s,t,[0,m-1]}} \\ \mathbf{0}_{m_{1 }}\\ \vdots \\ \ket{c_{p,s,t,[m,m+m_p-]}}\\ \vdots \\ \mathbf{0}_{m_{u}}
    \end{pmatrix}\neq\begin{pmatrix}
        \mathbf{0}_{m} \\ \ket{d_{e,f}}
    \end{pmatrix},$$ 
then $D$ and $C_p$ are said to be $(m,p;m_1, \dots, m_p, \dots, m_u)$-type disjoint. 

Clearly,  when $m_1 = \dots = m_p = \dots = m_u = 1$, the condition of $(m,p;m_1, \dots, m_p, \dots, m_u)$-type disjointness for an MCIQLS$(m+m_p,m_p)$ and an MCQLS$(w)$ is automatically satisfied.And if the $m$-dimensional projections onto the first $m$ components of those entries in the $\mathrm{MCIQLS}(m+m_p,m_p)$ ($p=1,2,\dots,u$) are all nonzero, then the condition of $(m,p;m_1, \dots, m_p, \dots, m_u)$-type disjointness for an $\mathrm{MCIQLS}(m+m_p,m_p)$ and an $\mathrm{MCQLS}(w)$ is also automatically satisfied.

\begin{theorem} \label{MCDQLS-SDPC}
Let $n, m, u, m_1, \ldots, m_p, \ldots, m_u, w$ be positive integers, where $n$ is even, $0 \le u \le n-2$, and $w = \displaystyle\sum_{p=1}^{u} m_p$.
Suppose there exists a DLS$(n)$ possessing $u+2$ pairwise disjoint transversals, including the main diagonal and the anti-diagonal.
Furthermore, if
\begin{enumerate}
    \item[$(1)$] there exist $n+2$ pairwise disjoint MCQLS$(m)$s, two of which are idempotent;
    \item[$(2)$]  there exist $u$ pairwise $m$-disjoint MCIQLSs, which are MCIQLS$(m+m_p, m_p)$, $1 \le p \le u$, respectively, and all of them are $m$-disjoint from the MCQLS$(m)$s in $(1)$;
    \item[$(3)$]  there exists an MCDQLS$(w)$ that is $(m,p;m_1, \ldots, m_p, \ldots, m_u)$-type disjoint from the MCIQLS$(m+m_p, m_p)$ in $(2)$,
\end{enumerate}
then there exists an MCDQLS$(mn+w)$.
\end{theorem}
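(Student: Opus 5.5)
We build the MCDQLS$(mn+w)$ by the quantum analogue of the classical singular direct product. Let $L=(l_{x,y})$ be the DLS$(n)$ on symbols $\mathbb{Z}_n$. Let $T_1,\dots,T_u$ be $u$ of its pairwise disjoint transversals, chosen different from the main diagonal $T_{\mathrm{main}}$ and the anti-diagonal $T_{\mathrm{anti}}$; all $u+2$ are pairwise disjoint. Index rows and columns of the new square by $\mathbb{Z}_n\times\mathbb{Z}_m$ together with $w$ extra indices, split into $u$ groups $R_1,\dots,R_u$ with $|R_p|=m_p$. We take the $n+2$ pairwise disjoint MCQLS$(m)$s of $(1)$ and label them $A_0,\dots,A_{n-1}$ (non-idempotent ones allowed) and $A_{\mathrm{main}},A_{\mathrm{anti}}$ (the two idempotent ones). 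Let $C_p$ be the MCIQLS$(m+m_p,m_p)$ and $D$ the MCDQLS$(w)$. Basis vectors of $\mathcal{H}_{mn+w}$ are $\ket{a}\otimes\ket{b}$ for $a\in\mathbb{Z}_n$, $b\in\mathbb{Z}_m$, followed by the $w$ extra coordinates.

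The filling is as follows. In block $(x,y)$ with $(x,y)\notin T_1\cup\dots\cup T_u$, put $\ket{l_{x,y}}\otimes_{+}A$, where $A=A_{\mathrm{main}}$ on the main diagonal, $A=A_{\mathrm{anti}}$ on the anti-diagonal, and otherwise some fixed $A_{\sigma(x,y)}$. The choice of $\sigma$ is made below. If $(x,y)\in T_p$, we use $C_p$. Its top-left $m\times m$ part goes to block $(x,y)$ with entries $\ket{l_{x,y}}\otimes_{\widehat p}\ket{c_{p,s,t}}$. The last $m_p$ columns of $C_p$ (rows $s<m$) go to columns $R_p$ in block-row $x$. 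The last $m_p$ rows go to rows $R_p$ in block-column $y$. The $w\times w$ corner receives $\ket{\mathbf 0_{mn}}\oplus\ket{d_{e,f}}$, i.e.\ $(\mathbf 0_m;\ket{d_{e,f}})$ in the notation of $(3)$.

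Orthonormality of rows and columns is checked exactly as in the classical proof. Fix a row $(x,s)$. For each symbol $a$, the entries from the block containing $a$ span $\ket a\otimes\mathcal{H}_m$. The exception is the block in $T_p$: there the entries together with the $R_p$-portion span $\ket a\otimes\mathcal{H}_m$ plus the $R_p$ coordinates, by condition (3) of an IQLS and because the first $m$ rows of $C_p$ form bases of $\mathcal{H}_{m+m_p}$. Since $T_1,\dots,T_u$ are transversals, each $R_p$ is hit exactly once in each block-row. Rows in $R_p$ use the bottom rows of $C_p$, which span $\ket{l_{x,y}}\otimes\mathcal{H}_m$ for each block column $y$, together with a row of $D$. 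Columns are symmetric.

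Next the diagonals. With the natural ordering (blocks in order, extra indices last), the main diagonal of the big square consists of the main diagonals of the blocks $(x,x)$ together with the main diagonal of $D$. Because $T_p$ avoids the main diagonal, blocks $(x,x)$ carry $\ket{x}\otimes_+A_{\mathrm{main}}$. Since $A_{\mathrm{main}}$ is idempotent and $x$ ranges over $\mathbb{Z}_n$, these give an orthonormal basis of $\mathcal{H}_{mn}\oplus\mathbf 0$. Together with the main diagonal of $D$ they give a basis of $\mathcal{H}_{mn+w}$.

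The anti-diagonal is the main obstacle. The anti-diagonal of the big square must pass through the anti-diagonal cells of blocks $(x,n-1-x)$ and the anti-diagonal of $D$, but the extra rows and columns sit at the end and break the symmetry. I would reorder the rows and columns: place half of the block-rows, then the $w$ extra rows, then the other half, and do the same for columns. This is possible because $n$ is even. After the reordering, the extra $w\times w$ square sits at the exact centre, so both diagonals pass through the blocks $(x,x)$, the blocks $(x,n-1-x)$ and the two diagonals of $D$. This is the step I expect to require care: the positions of $T_{\mathrm{main}}$ and $T_{\mathrm{anti}}$ must be recomputed after the permutation, and one must check that within the block $(x,n-1-x)$ the cells used by the big anti-diagonal form the anti-diagonal of $A_{\mathrm{anti}}$. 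If they do not, I would apply a row reversal to $A_{\mathrm{anti}}$ to make its idempotent transversal lie where required. This explains why two idempotent squares are required.

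Finally, maximum cardinality comes from Lemma~\ref{state vector} and the disjointness hypotheses. Entries in different blocks of the $mn\times mn$ part are distinguished as follows. Entries carrying different symbols $a$ are orthogonal on the $\ket a$ factor. Entries carrying the same $a$ but from different $A$'s or $C_p$'s differ because the MCQLS$(m)$s are pairwise disjoint and $m$-disjoint from the $C_p$, which are pairwise $m$-disjoint. Within one block, maximum cardinality of $A$ or $C_p$ applies. Entries of $D$ versus $C_p$-border entries are distinct by the type-disjointness in $(3)$, and entries within $D$ are distinct since $D$ is an MCDQLS$(w)$. To keep the same symbol $a$ from repeating with the same $A_i$, we need $\sigma$ such that for each symbol $a$ the blocks outside the $u+2$ transversals use pairwise distinct squares. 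Taking $\sigma(x,y)=$ the column index $y$ works. Each symbol appears in at most $n$ blocks, and $n$ non-idempotent labels together with the two idempotent ones match the count $n+2$. So the resulting square is an MCDQLS$(mn+w)$.
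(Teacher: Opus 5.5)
Your construction is the paper's. You use the same $\otimes_{+}$/$\otimes_{\widehat p}$ filling; you index the generic squares by block column where the paper uses block row, and both work because each symbol occurs once per row and once per column. You also move the $w$ extra rows and columns to the centre and reverse the second idempotent square, as the paper does. The reversal is genuinely needed: after the reordering, the big anti-diagonal meets block $(x,n-1-x)$ in its cells $(k,m-1-k)$. The orthonormality and diagonal checks are fine.

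\textbf{The gap.} It is in the cardinality step, at ``entries carrying different symbols $a$ are orthogonal on the $\ket a$ factor''. An entry $\ket c$ of the top-left $m\times m$ part $C_{p1}$ of $C_p$ may have $\ket{c_{[0,m-1]}}=\mathbf 0_m$. Then $\ket a\otimes_{\widehat p}\ket c$ has no $\ket a$ factor at all: it is $\ket{c_{[m,m+m_p-1]}}$ placed in the $R_p$ coordinates, independently of $a$. Because $C_{p1}$ is copied into all $n$ blocks of $T_p$, that vector occurs $n$ times, and maximum cardinality fails.

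This is not a corner case. In a row $s<m$ of $C_p$, the $m_p$ entries in the last $m_p$ columns are orthonormal vectors of $\mathrm{span}\{\ket 0,\dots,\ket{m-1}\}$. When $m_p=m$ they exhaust that span, so \emph{every} entry of $C_{p1}$ has vanishing first $m$ coordinates.

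Hypothesis (3) has content only for such entries. An entry with $\ket{c_{[0,m-1]}}\neq\mathbf 0_m$ is automatically distinct from each $(\mathbf 0_m;\ket{d_{e,f}})$. So the entries (3) is designed to handle are exactly the ones your construction duplicates. Your appeal to (3) for the border entries is also misplaced: those entries lie in the first $mn$ coordinates and are trivially distinct from the entries of $D$.

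\textbf{The paper's proof.} It has the same hole. Case 7 of Claim 2 in Appendix A asserts that $\ket{c_{p,s,t}}$ and $\ket{c_{p',s',t'}}$ are distinct when $p=p'$. That overlooks one entry $(s,t)=(s',t')$ sitting in two different cells of $T_p$. So following the paper will not close the gap.

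\textbf{What is missing.} You need an extra hypothesis, for instance that every entry in the top-left $m\times m$ part of each $C_p$ has nonzero projection onto the first $m$ coordinates. Under it, $\ket a\otimes\ket{c_{[0,m-1]}}$ does separate different $a$, condition (3) becomes automatic, and the rest of your distinctness argument goes through. This hypothesis holds for the MCIQLS$(m+1,1)$s the paper constructs, since their first $m$ components have nonzero sum. It rules out $m_p=m$, and hence the MCIQLS$(2m,m)$ ingredients.

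Alternatively, you could place $n$ different IQLSs in the $n$ cells of $T_p$; the row and column checks are unchanged. That, however, requires disjointness assumptions among those IQLSs that the statement does not provide.
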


\begin{proof}
Replacing each element $i\in \mathbb{Z}_n$ in a DLS$(n)$ with $\ket{i}\in \mathcal{H}_n$ yields a DQLS$(n)$.
  Suppose that the classical DQLS$(n)$ corresponding to an existing DLS$(n)$ is denoted by $A =
(\ket{a_{i,j}})$. When $u \neq 0$, the $u$ disjoint transversals other than the main and anti-diagonals are denoted by $T_1, T_2, \dots, T_u$, respectively. 
For $1 \leq p \leq u$, to make the notation clearer, an entry $\ket{a_{i,j}}$ in the transversal $T_p$ is also denoted by $\ket{a_{p,i,j}}$. Suppose that two existing idempotent MCQLS$(m)$s are $B_{-1} = (\ket{b_{-1,k,l}})$ and $B_{-2} = (\ket{b_{-2,k,l}})$, 
where we may assume that $B_{-2}$ has been subjected to a column reversal permutation so that the main diagonal transversal is transformed into the anti-diagonal transversal. Moreover, the remaining MCQLS$(m)$s are $B_0, B_1, \dots, B_{n-1}$, where $B_i = (\ket{b_{i,k,l}})$. Suppose the existing IQLS$(m+m_p, m_p)$ is $C_p = (\ket{c_{p,s,t}})$, $(1 \leq p \leq u)$, and  it is partitioned into blocks as follows:
 \[
        C_p = \begin{pmatrix}
            C_{p1} & C_{p3}\\
            C_{p2} & \
        \end{pmatrix},
        \]
  where $C_{p1}$ is an $m\times m$  subarray. Suppose the existing MCDQLS$(w)$ is $D = (\ket{d_{e,f}})$.

  We construct a square of order \( (mn + w) \), denoted as \( M \). First,
  the subarray of order $mn$  denoted by $M_0$ in the upper left corner of $M$ is to be partitioned into $n^2$ disjoint $m \times m$ blocks, where the $(i, j)$-block is occupied by the subarray $M_{i,j}$ $(i, j = 0, 1, \dots, n-1)$, which is constructed as follows:
\[
M_{i,j} =
\begin{cases}
\ket{a_{i,j}} \otimes_{+}  B_{i} & \text{if } \ket{a_{i,j}} \notin (\bigcup\limits_{k=1}^{u} T_k), j\neq i, n-1-i, \\
\ket{a_{i,j}} \otimes_{+}  B_{-1} & \text{if } j=i, \\
\ket{a_{i,j}} \otimes_{+}  B_{-2} & \text{if } j=n-1-i, \\
\ket{a_{p,i,j}} \otimes_{\widehat{p}} C_{p1} & \text{if } \ket{a_{p,i,j}} \in T_p, \ \ 1 \leq p \leq u.
\end{cases}
\]
Next, the \( w \times mn \)  subarray  in the lower-left corner of \( M \)  is partitioned into  $un$ blocks,  where its $(p-1, j)$-block $(p = 1, 2, \dots, u, j = 0, 1, \dots, n-1 )$ is occupied by an $m_p \times m$ subarray  
\[
M_{n+p-1,j} = \ket{a_{p,i,j}} \otimes_{\widehat{p}} C_{p2}.
\]  Note that here, since $\ket{a_{p,i,j}}$ is in  the transversal  $T_p$ of $A$, the pair $(p, j)$ uniquely determines an $i \in \mathbb{Z}_n$.

Similarly,     the \(mn \times w \)  subarray in the upper-right corner of \( M \)  is partitioned into  $un$ blocks,  where its $(i, p-1)$-block $ (i = 0, 1, \dots, n-1, p = 1, 2, \dots, u )$ is occupied by an $m \times m_p$ subarray 
\[
M_{i,n+p-1} = \ket{a_{p,i,j}} \otimes_{\widehat{p}} C_{p3}.
\]

 Finally, the  \( w\times w \) subarray in the lower-right corner of \( M \) is denoted as \( M_w \). The \( (e, f))  \) entry is given by
\[
\begin{pmatrix}
\mathbf{0}_{mn} \\
\ket{d_{e,f}}
\end{pmatrix},
\]
which corresponds to the \((mn + e)\)-th row and \((mn + f)\)-th column of \(M\). 

  In order to show that \( M \) is an MCQLS$(mn+w)$,  we first present the following two claims:
 \begin{enumerate}
    \item[] \textbf{Claim $1$.} \( M \) is a  QLS$(mn+w)$.
    \item[]  \textbf{Claim $2$.} The cardinality of \( M \) is the maximum.
\end{enumerate}
 The corresponding proofs are provided in
Appendix A. Then, we attempt to transform $M$ into a diagonal quantum Latin square by permuting its rows and columns.
we need to prove that the entries on the main diagonal and on the anti-diagonal of $M_0$
form an orthonormal basis of \(\operatorname{span}\{\ket{0}, \ket{1}, \dots, \ket{mn-1}\} \subset \mathcal{H}_{mn+w} \) respectively. Clearly, the entries in \( M \) are all unit vectors and the entries  on the main diagonal and the anti-diagonal of $M_0$ are all in \(\operatorname{span}\{\ket{0}, \ket{1}, \dots, \ket{mn-1}\}  \). So, it suffices to check the orthogonality of the entries on the main diagonal and on the 
anti-diagonal.  Consider any two  different entries
 \[
                \ket{\alpha} = \begin{pmatrix}
                    \ket{a_{i,i}} \otimes \ket{b_{-1,k,k}}\\ \mathbf{0}_w
                \end{pmatrix},
                \ket{\beta} = \begin{pmatrix}
                    \ket{a_{i',i'}} \otimes \ket{b_{-1,k',k'}}\\ \mathbf{0}_w
                \end{pmatrix},
                \] with $(i,k)\neq (i',k')$, where $\ket{a_{i,i}}$, $\ket{a_{i',i'}}$ are entries of $A$ and $\ket{b_{k,k}}$, $\ket{b_{k',k'}}$ are entries of $B_{-1}$. 
 If $i \neq i'$, then $\braket{a_{i,i} \mid a_{i',i'}} = 0$ since $A$ is a DQLS($m$). And if $i = i'$ and $k \neq k'$, then $\braket{b_{k,k} \mid b_{k',k'}} = 0$, since $B_{-1}$ is an idempotent MCQLS$(m)$. So we have
\[
(\alpha, \beta) = \braket{a_{i,i} \mid a_{i',i'}} \braket{b_{k,k} \mid b_{k',k'}} + 0 = 0.
\]i.e., $\ket{\alpha}$ and $\ket{\beta}$ are orthogonal.
Similarly, we can also verify that any two distinct entries on the anti-diagonal of $M_0$ are orthogonal.

Note that the entries on the main diagonal and on the anti-diagonal of \( M_w \) respectively form an orthonormal basis of \(\operatorname{span}\{\ket{mn}, \ket{mn+1}, \dots, \ket{w-1}\}\). By moving rows \(mn/2, mn/2+1, \dots, mn-1\) of \( M\) to the last \(mn/2\) rows and the corresponding columns to the last \(mn/2\) columns, we obtain an MCQLS$(mn+w)$  denoted as $N$. Since the entries on the main diagonal and on the anti-diagonal of \( N \) respectively form an orthonormal basis of \(\mathcal{H}_{mn+w}\), \( N \) is an  MCDQLS$(mn+w)$. \qed
\end{proof}

\begin{remark} \label{rmk2}
       If $u = n-2$ in condition $(1)$ of Theorem~\ref{MCDQLS-SDPC}, then the number of mutually disjoint sets $A$ in condition (2) can be changed from $n+2$ to $n$. Furthermore, if $u = 0$ in condition $(2)$, then both conditions $(2)$ and $(3)$ can be omitted. 
    \end{remark}

\begin{remark} \label{rmk1}
In order to apply Theorem~\ref{MCDQLS-SDPC} repeatedly, we will sometimes require in what follows that the MCQLS$(m)$, the MCIQLS$(m+m_p, m_p)$ for $1\le p\le u$, and the MCDQLS$(w)$ in conditions $(1)$, $(2)$, and $(3)$ all satisfy that the sum of all components of each of their elements is nonzero.
      Note that in the proof of Theorem~\ref{MCDQLS-SDPC}, $\ket{a_{i,j}}\in\{\ket{0}, \ket{1}, \dots, \ket{n-1}\}$. Under the new requirement, the sum of the components of each entry of the resulting MCDQLS$(mn+w)$ from Theorem~\ref{MCDQLS-SDPC} is nonzero.

    \end{remark}

   In fact the parameter $n$ in theorem \ref{MCDQLS-SDPC} can be odd. We give without proof the following simple construction.
    
\begin{lemma} \label{MCDQLS-SDPC2}
Let $n, m$ be positive integers. Suppose there exist a DLS$(n)$ and $n$ pairwise disjoint  MCDQLS$(m)$s, then there exists a  MCDQLS$(mn)$.
\end{lemma}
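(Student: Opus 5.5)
We take the direct product of the DLS$(n)$ with the $n$ squares $B_0,\dots,B_{n-1}$: the symbol of each cell chooses the basis vector, and the row index of the cell chooses which $B_i$ fills that block. Let $A=(a_{i,j})$ be the DLS$(n)$, viewed as a classical DQLS$(n)$ with entries $\ket{a_{i,j}}\in\{\ket{0},\dots,\ket{n-1}\}$. Let $B_0,\dots,B_{n-1}$ be the given pairwise disjoint MCDQLS$(m)$s, $B_i=(\ket{b_{i,k,l}})$. Form the $mn\times mn$ array $M$ whose $(i,j)$-block is $\ket{a_{i,j}}\otimes B_i$. That is, the cell $(im+k,\,jm+l)$ holds $\ket{a_{i,j}}\otimes\ket{b_{i,k,l}}\in\mathcal{H}_{mn}$. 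All entries are unit vectors.

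\textbf{Step 1: $M$ is a QLS$(mn)$.} Take two cells in the same row $im+k$, in blocks $j\neq j'$ or in the same block. The inner product is $\braket{a_{i,j}|a_{i,j'}}\braket{b_{i,k,l}|b_{i,k,l'}}$. If $j\neq j'$, the first factor vanishes because $A$ is Latin. If $j=j'$, the second factor vanishes because row $k$ of $B_i$ is orthonormal. For two cells in the same column, in row-blocks $i\neq i'$, the first factor $\braket{a_{i,j}|a_{i',j}}$ already vanishes, so it does not matter that the second factor involves different squares $B_i,B_{i'}$. Within one block the column orthogonality of $B_i$ applies.

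\textbf{Step 2: maximum cardinality.} By Lemma~\ref{state vector}, two entries $\ket{a_{i,j}}\otimes\ket{b_{i,k,l}}$ and $\ket{a_{i',j'}}\otimes\ket{b_{i',k',l'}}$ coincide only if $a_{i,j}=a_{i',j'}$ and $\ket{b_{i,k,l}}=\ket{b_{i',k',l'}}$. If $i\neq i'$, the second equality is excluded by the pairwise disjointness of the $B_i$. If $i=i'$, then $a_{i,j}=a_{i,j'}$ forces $j=j'$, and the maximum cardinality of $B_i$ forces $(k,l)=(k',l')$. Hence all $m^2n^2$ entries are distinct.

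\textbf{Step 3: diagonals.} The main diagonal of $M$ consists of the diagonal cells $(im+k,im+k)$ of the blocks $(i,i)$, with entries $\ket{a_{i,i}}\otimes\ket{b_{i,k,k}}$. For $i\neq i'$ these are orthogonal since the diagonal of $A$ is a transversal. For $i=i'$ they are orthogonal since $B_i$ is diagonal. The anti-diagonal cell $(im+k,\,mn-1-im-k)$ lies in block $(i,n-1-i)$ at local position $(k,m-1-k)$. Its entry is $\ket{a_{i,n-1-i}}\otimes\ket{b_{i,k,m-1-k}}$, and the same argument uses the anti-diagonal transversals of $A$ and of $B_i$. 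So $M$ is an MCDQLS$(mn)$.

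The only point needing care is Step 1's column condition and Step 2's cross-block distinctness. This is exactly why each block row uses its own $B_i$, rather than one square per block, and why disjointness of the $B_i$ is required. No row or column rearrangement is needed here, unlike in Theorem~\ref{MCDQLS-SDPC}, which is why $n$ may be odd.
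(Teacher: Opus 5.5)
Your proof is correct and is essentially the construction the paper intends: the paper states this lemma without proof, as the simple $u=w=0$ analogue of the singular direct product in Theorem~\ref{MCDQLS-SDPC}, with the $(i,j)$-block $\ket{a_{i,j}}\otimes B_i$, and your row, column, cardinality and diagonal checks are the corresponding cases of Appendix~A in this setting. One refinement of your closing remark: the essential reason $n$ may be odd is that each $B_i$ is itself diagonal. For odd $n$ the central block $(\frac{n-1}{2},\frac{n-1}{2})$ lies on both diagonals of $A$, and it is covered by $B_{(n-1)/2}$ alone, whereas the definition in Theorem~\ref{MCDQLS-SDPC} would assign that block both $B_{-1}$ and $B_{-2}$. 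Your Step~3 already handles this case correctly.
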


\begin{remark} \label{}
In Lemma~\ref{MCDQLS-SDPC2}, if the sum of the components of each entry of each MCDQLS$(m)$ is nonzero, then the sum of the components of each entry of the resulting MCDQLS$(mn)$ is also nonzero.
    \end{remark}

We also have the  singular direct product construction for the idempotent quantum Latin square with maximum cardinality.

\begin{theorem} \label{MCIDQLS-SDPC}
Let $n, m, u, m_1, \dots, m_p, \dots, m_u, w$ be positive integers, where $0 \le u \le n-1$, and $w = \displaystyle\sum_{p=1}^{u} m_p$.
Suppose there exists an idempotent LS$(n)$ possessing $u+1$ pairwise disjoint transversals, including the main diagonal.
Furthermore, if
\begin{enumerate}
    \item[$(1)$] there exist $n+1$ pairwise disjoint MCQLS$(m)$, one of which is idempotent;
    \item[$(2)$]  there exist $u$ pairwise $m$-disjoint MCIQLS, which are MCIQLS$(m+m_p, m_p)$, $1 \le p \le u$, respectively, and all of them are $m$-disjoint from the MCQLS$(m)$ in $(1)$;
    \item[$(3)$]  there exists an idempotent   MCQLS$(w)$ that is $(m,p;m_1, \dots, m_p, \dots, m_u)$-type disjoint from the MCIQLS$(m+m_p, m_p)$ in $(2)$,
\end{enumerate}
then there exists an idempotent  MCQLS$(mn+w)$.
\end{theorem}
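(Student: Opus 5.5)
We would mirror the proof of Theorem~\ref{MCDQLS-SDPC}, simply dropping the anti-diagonal. Let $A=(\ket{a_{i,j}})$ be the classical idempotent QLS$(n)$ obtained from the idempotent LS$(n)$. Its main diagonal is one transversal, and $T_1,\dots,T_u$ denote the remaining $u$ disjoint transversals. From condition $(1)$, let $B_{-1}$ be the idempotent MCQLS$(m)$ and $B_0,\dots,B_{n-1}$ the other $n$ pairwise disjoint MCQLS$(m)$s. From condition $(2)$, let $C_p$ be the MCIQLS$(m+m_p,m_p)$, split as $\begin{pmatrix} C_{p1} & C_{p3}\\ C_{p2} & \end{pmatrix}$. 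From condition $(3)$, let $D$ be the idempotent MCQLS$(w)$.

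We build the square $M$ of order $mn+w$ as follows.
\begin{itemize}
\item The $(i,j)$-block of the upper-left $mn\times mn$ part is $\ket{a_{i,j}}\otimes_{+}B_{-1}$ if $j=i$, $\ket{a_{p,i,j}}\otimes_{\widehat{p}}C_{p1}$ if $(i,j)\in T_p$, and $\ket{a_{i,j}}\otimes_{+}B_i$ otherwise.
\item The border blocks are $\ket{a_{p,i,j}}\otimes_{\widehat{p}}C_{p2}$ (lower-left) and $\ket{a_{p,i,j}}\otimes_{\widehat{p}}C_{p3}$ (upper-right), where $(p,j)$, respectively $(i,p)$, determines the cell of $T_p$.
\item The lower-right $w\times w$ corner carries $\begin{pmatrix}\mathbf{0}_{mn}\\ \ket{d_{e,f}}\end{pmatrix}$.
\end{itemize}

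The first step is the QLS property. For a row $(i,k)$ with $k<m$, the entries split by column block $j$. For the blocks with $\ket{a_{i,j}}\otimes_+ B$ the first factor $\ket{a_{i,j}}=\ket{j'}$ runs over distinct basis vectors. The unique $T_p$-block in that row, together with the border block $M_{i,n+p-1}$, reassembles row $k$ of $C_p$ tensored with $\ket{a_{p,i,j}}$ in the first $m$ coordinates and placed in the $p$-th tail slot. Orthogonality across blocks then comes from the orthogonality of the $\ket{a}$'s, or from disjoint supports in the tail coordinates. Orthogonality within a block comes from the QLS/IQLS row property. The rows in the border band and the columns are handled symmetrically, exactly as in Claim~1 of Theorem~\ref{MCDQLS-SDPC}.

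The second step is maximum cardinality, the analogue of Claim~2. We use Lemma~\ref{state vector}: two entries $\ket{a}\otimes\ket{b}$ and $\ket{a'}\otimes\ket{b'}$ coincide only if both factors coincide. Entries from different $B$'s are distinct by disjointness, and entries from $B$ versus $C_p$ are distinct by $m$-disjointness. Entries of $C_p$ versus $C_q$ lie in different tail supports, or else are separated by $m$-disjointness. Entries of $D$ versus any $\otimes_{\widehat p}$ entry are separated by the $(m,p;\dots)$-type disjointness of condition $(3)$. The $\otimes_{+}$ entries have zero tail, whereas $D$ entries are zero in the first $mn$ coordinates, so these cannot coincide either. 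Within a single source array, distinctness follows from maximum cardinality together with the distinct $\ket{a_{i,j}}$ factors.

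The third step, idempotency, is easy, and no row or column permutation is needed. The main diagonal of $M$ consists of the diagonals of the blocks $\ket{a_{i,i}}\otimes_+ B_{-1}$ together with the diagonal of the $D$-corner. For $i\ne i'$ orthogonality comes from the diagonal of $A$; for $i=i'$ it comes from the idempotency of $B_{-1}$. The $D$-diagonal entries are orthonormal because $D$ is idempotent, and they are supported in the last $w$ coordinates, hence orthogonal to the $\otimes_+$ entries. Together these give $mn+w$ orthonormal vectors, so $M$ is an idempotent MCQLS$(mn+w)$.

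The main obstacle is the bookkeeping in the first two steps. One must check that the pieces $C_{p1},C_{p2},C_{p3}$ distributed along $T_p$ and the border reconstruct whole rows and columns of $C_p$, and one must enumerate every pair type in the cardinality argument. We would reuse the Appendix~A argument for Theorem~\ref{MCDQLS-SDPC} essentially verbatim: the block layout there differs only in having $B_{-2}$ on the anti-diagonal, which here is replaced by an ordinary $B_i$.
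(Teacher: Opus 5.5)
Your construction is the one the paper has in mind. The paper gives no separate proof: it is the layout of Theorem~\ref{MCDQLS-SDPC} with $B_{-2}$ replaced by an ordinary $B_i$ and without the final row/column moves. Your QLS and idempotency arguments are fine.

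\textbf{The gap.} The cardinality step fails at the sentence ``within a single source array, distinctness follows from maximum cardinality together with the distinct $\ket{a_{i,j}}$ factors.'' The same subarray $C_{p1}$ is placed in all $n$ cells of $T_p$. Suppose some entry $\ket{c}$ of $C_{p1}$ has $\ket{c_{[0,m-1]}}=\mathbf{0}_m$. Then
\[
\ket{a}\otimes_{\widehat{p}}\ket{c}=\bigl(\mathbf{0}_{mn};\mathbf{0}_{m_1};\dots;\ket{c_{[m,m+m_p-1]}};\dots;\mathbf{0}_{m_u}\bigr)
\]
does not depend on $\ket{a}$. So the same vector occupies $n$ cells of $M$, and the cardinality is not maximal.

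Hypotheses $(1)$--$(3)$ do not exclude this, because $m$-disjointness and type-disjointness only compare different arrays. For $m_p=m$ it is unavoidable. In any column $t<m$ of $C_p$, the $m$ entries of $C_{p2}$ are orthonormal vectors in the first $m$ coordinates, so they already form a basis of that space. Hence every entry of $C_{p1}$ is supported on the last $m_p$ coordinates.

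Reusing Appendix~A verbatim does not help. Its Claim~2, Case~7 has the same hole: the subcase $p=p'$, $(s,t)=(s',t')$, $(i,j)\neq(i',j')$ with vanishing first $m$ components.

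\textbf{How to close it.} You need one of the following extra ingredients.

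\emph{Option 1: nonvanishing heads.} Add the hypothesis that every entry of each $C_{p1}$ has nonzero projection onto the first $m$ coordinates. This holds for the MCIQLS$(m+1,1)$s built from the Fourier matrix, where the sum of the first $m$ components is nonzero. It is impossible when $m_p=m$. Under this hypothesis, Lemma~\ref{state vector} separates $\ket{a}\otimes\ket{h}$ from $\ket{a'}\otimes\ket{h'}$ whenever $a\neq a'$ and $h,h'\neq\mathbf{0}$, and your argument goes through.

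\emph{Option 2: one copy per cell.} Use $n$ pairwise disjoint MCIQLS$(m+m_p,m_p)$s, one attached to each cell of $T_p$. All copies must satisfy the same $m$-disjointness and type-disjointness conditions. Take the border block in row band $i$ (resp.\ column band $j$) from the copy attached to the unique cell of $T_p$ in that band. Orthonormality survives for two reasons:
\begin{itemize}
\item each upper row band and each left column band meets only one cell of $T_p$;
\item a row of the bottom border band (resp.\ a column of the right border band) only needs each $C_{p2}$-row (resp.\ $C_{p3}$-column) it contains to be an orthonormal basis of the first $m$ coordinates, with the attached $\ket{a}$'s distinct.
\end{itemize}
The vectors supported only on the last $m_p$ coordinates are then separated by the disjointness of the copies.
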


\begin{remark}\label{sum--SDPC} For Theorem~\ref{MCIDQLS-SDPC}, we have a similar discussion as in Remark~\ref{rmk2} and Remark~\ref{rmk1}.
In Theorem~\ref{MCIDQLS-SDPC}, if we remove condition $(3)$, do not require the initial LS$(n)$ to be idempotent, and change condition $(1)$ to ``there exist $n$ pairwise disjoint $\mathrm{MCQLS}(m)$'', then we obtain an $\mathrm{MCIQLS}(mn+w,w)$.
As in the discussion in Remark~\ref{rmk1}, we can also require that the sum of the components of each entry of the resulting $\mathrm{MCIQLS}(mn+w,w)$ be nonzero.
  \end{remark}

In order to use Theorem~\ref{MCDQLS-SDPC} and  Theorem~\ref{MCIDQLS-SDPC} smoothly, we list below, in the form of a lemma, some processing methods that guarantee the $(m,p;m_1,\dots,m_p,\dots,m_u)$-type disjointness property  in the following text. Since the conclusion is obvious, we omit the proof.

\begin{lemma} \label{type disjoint}
Let $m, w, m_1, \dots, m_p, \dots, m_u$ be nonnegative integers satisfying $w = \displaystyle\sum_{p=1}^{u} m_p$ and $m_1 \leq \dots \leq m_p \leq \dots \leq m_u$. Suppose that an MCIQLS$(m+m_p;m_p)$ $C_p = (\ket{c_{p,s,t}})$ for $p=1,2,\dots,u$ and an MCQLS$(w)$ $D = (\ket{d_{e,f}})$ exist. 
If for every $\ket{d_{e,f}}$ and  every $\ket{c_{p,s,t}}$, one of the following conditions hlods:
 \begin{enumerate}
         \item[$(1)$] $u>0$ and $\ket{c_{p,s,t,[m,m+m_p-1]}}\neq \mathbf{0}_{m}$;
  \item[$(2)$] $u>1$ and the first $m_1+m_2$ components of $\ket{d_{e,f}}$ are all nonzero;
  \item[$(3)$] $m_1=\dots=m_{u_1}=1$, $m_{u_1+1}=\dots=m_{u}=m$, $1\leq u_1<m$, $m\mid w$ and $\ket{d_{e,f}}$ has $m$ consecutive nonzereo components starting at a position whose index is congruent to $0$ modulo $m$,
\end{enumerate}           
then for each $1 \leq p \leq u$, $D$ and $C_p$ are $(m,p;m_1,\dots,m_p,\dots,m_u)$-type disjoint.
\end{lemma}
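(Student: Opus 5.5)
The plan is to unwind the definition of $(m,p;m_1,\dots,m_u)$-type disjointness into a statement about supports. Suppose, for a contradiction, that for some $p$, some nonempty $\ket{c_{p,s,t}}$ and some $\ket{d_{e,f}}$ the vector
$$\bigl(\ket{c_{p,s,t,[0,m-1]}};\mathbf{0}_{m_1};\dots;\ket{c_{p,s,t,[m,m+m_p-1]}};\dots;\mathbf{0}_{m_u}\bigr)$$
equals $(\mathbf{0}_m;\ket{d_{e,f}})$ up to a global phase. Comparing coordinates gives two consequences, and each of the three hypotheses will contradict one of them:
\begin{enumerate}
\item[(a)] $\ket{c_{p,s,t,[0,m-1]}}=\mathbf{0}_m$;
\item[(b)] $\ket{d_{e,f}}$ is supported inside the $p$-th block of coordinates of $\mathcal{H}_w$, namely the indices $m_1+\dots+m_{p-1},\dots,m_1+\dots+m_p-1$, because all other blocks of the left-hand vector are zero.
\end{enumerate}

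Under hypothesis $(1)$ the relevant requirement is that the projection of $\ket{c_{p,s,t}}$ onto its first $m$ components is nonzero. This is the reading consistent with the remark preceding Theorem~\ref{MCDQLS-SDPC}; I would read the displayed index range in the statement as this projection. Under that reading, (a) fails immediately.

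Under hypothesis $(2)$ we have $u\ge 2$, so blocks $1$ and $2$ both exist. If $p\neq 1$, then (b) forces the first $m_1$ coordinates of $\ket{d_{e,f}}$ to vanish. If $p=1$, then (b) forces coordinates $m_1,\dots,m_1+m_2-1$ to vanish. Either conclusion contradicts the assumption that the first $m_1+m_2$ components of $\ket{d_{e,f}}$ are all nonzero.

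Under hypothesis $(3)$ the constraint $1\le u_1<m$ gives $m\ge 2$. The first $u_1$ blocks have size $1$, and block $p>u_1$ is the interval of length $m$ starting at index $u_1+(p-u_1-1)m$. That starting index is congruent to $u_1\not\equiv 0\pmod m$. By hypothesis, $\ket{d_{e,f}}$ has $m$ consecutive nonzero coordinates starting at an index $\equiv 0 \pmod m$. These cannot lie inside a block of size $1$, since $m\ge2$. They also cannot lie inside an interval of length exactly $m$ whose starting index differs from theirs, since containment would force the two intervals to coincide. So (b) fails in every case.

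The only step I expect to be delicate is the bookkeeping of block positions in case $(3)$, together with the interpretation of the misprinted condition $(1)$. Everything else is a direct comparison of zero patterns, and global phases play no role because a phase factor preserves which coordinates are zero.
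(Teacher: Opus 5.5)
Your argument is correct in substance and is exactly the zero-pattern comparison the paper has in mind; the paper omits the proof as obvious. Your reading of $(1)$ as $\ket{c_{p,s,t,[0,m-1]}}\neq\mathbf{0}_m$ is not just convenient but forced. Under the literal reading, an entry of $C_p$ supported only on its last $m_p$ coordinates satisfies $(1)$, yet it can still coincide with $(\mathbf{0}_m;\ket{d_{e,f}})$ up to phase.

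There is one hole. In case $(2)$ with $p\neq 1$, your contradiction needs $m_1\ge 1$, which the statement's ``nonnegative integers'' does not supply. No cleverer argument can fill it: if $m_1=0$ and $p=2$, the $p$-th block is exactly the first $m_1+m_2$ coordinates of $\mathcal{H}_w$, and the claim is false. For example:
\begin{itemize}
\item take $u=2$, $m_1=0$ and $m_2=w=m\ge 4$;
\item let $D$ be an MCQLS$(m)$ all of whose entries have all components nonzero;
\item let $C_2$ be the MCIQLS$(2m,m)$ obtained by tensoring the IQLS$(2,1)$, which has $\ket{1}\in\mathcal{H}_2$ in cell $(0,0)$, with $B_0=D$ and some $B_1$ disjoint from $D$, as in the MCIQLS$(nm,m)$ construction of Section~3.2.
\end{itemize}
The $(0,0)$-block of $C_2$ consists of the vectors $\ket{1}\otimes\ket{d_{e,f}}=(\mathbf{0}_m;\ket{d_{e,f}})$. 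So $(2)$ holds for every pair, while $D$ and $C_2$ are not type-disjoint. You should therefore add the hypothesis $m_p\ge 1$, as in Theorem~\ref{MCDQLS-SDPC}; with it, your case analysis is complete.

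Finally, in case $(3)$ you never use $m\mid w$, which is just as well. Since $w=u_1+(u-u_1)m\equiv u_1\not\equiv 0\pmod m$, condition $(3)$ as printed can never hold. What you actually prove is the version without $m\mid w$, which is the one the proof of Theorem~\ref{MCDQLS-result-odd} relies on (e.g.\ $w=23$, $m=7$).
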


\subsection{ Preliminaries}

In this subsection, we  attempt to obtain some results of quantum Latin squares, incomplete quantum Latin squares, and classical Latin squares needed in Theorems~\ref{MCDQLS-SDPC} and~\ref{MCIDQLS-SDPC} for later use.

It was pointed out in Ref.~\cite{cite18} that for any positive integers \( m \geq 2 \) and \( n \geq 2 \), there exist infinitely many pairwise disjoint MCQLS(\( mn \))s  or infinitely many pairwise disjoint  MCIQLS(\( mn,1 \))s. Note that the orders of the infinitely many disjoint MCQLSs here are composite numbers. Below, we first generalize this result to arbitrary orders.

\begin{lemma} \label{many MCIDQLS$(m+h;h)$} Let $m$ and $h$ be nonnegative integers. If there exists an MCIQLS$(m+h,h)$ such that for every nonempty entry, the sum of its first $m$ components is nonzero, then there exist infinitely many pairwise disjoint MCIQLS$(m+h,h)$s, each of which also has the property that the sum of the first $m$ components of every nonempty element is nonzero. 
Furthermore, the infinitely many pairwise disjoint MCIQLS$(m+h,h)$s have the same transversal properties as the initial MCIQLS$(m+h,h)$. 
\end{lemma}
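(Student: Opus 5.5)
Starting from the given MCIQLS$(m+h,h)$ $C=(\ket{c_{s,t}})$, I would produce a one-parameter family of modified squares by a unitary acting only on the first $m$ coordinates. For $\theta\in[0,2\pi)$ let $U_\theta=\operatorname{diag}(e^{\mathrm{i}\theta},1,\dots,1)\oplus I_h$, acting on $\mathcal{H}_{m+h}$. More usefully, I would take a one-parameter unitary family $V_\theta\oplus I_h$ with $V_\theta\in U(m)$ that fixes the subspace $\operatorname{span}\{\ket{0},\dots,\ket{m-1}\}$ and its complement. Put $C^{(\theta)}=((V_\theta\oplus I_h)\ket{c_{s,t}})$. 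Unitarity preserves inner products, so rows and columns indexed $0,\dots,m-1$ remain orthonormal bases of $\mathcal{H}_{m+h}$. In the last $h$ rows and columns the nonempty entries lie in $\operatorname{span}\{\ket{0},\dots,\ket{m-1}\}$, and $V_\theta\oplus I_h$ maps this span onto itself, so condition (3) of the IQLS definition is preserved. Distinctness up to global phase is also preserved by a unitary, so each $C^{(\theta)}$ is an MCIQLS$(m+h,h)$. Any transversal of $C$ (a set of cells whose entries are orthonormal, or orthonormal in the relevant subspace) remains one in $C^{(\theta)}$, which gives the final sentence of the lemma.

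Next I would choose $V_\theta$ so that the component sums control disjointness. Let $\ket{\mathbf{1}}=\frac{1}{\sqrt m}(1,\dots,1)^{\top}\in\mathcal{H}_m$. I would take $V_\theta$ to multiply $\ket{\mathbf{1}}$ by $e^{\mathrm{i}\theta}$ and to act as the identity on $\ket{\mathbf{1}}^{\perp}$, that is, $V_\theta=I+(e^{\mathrm{i}\theta}-1)\ket{\mathbf 1}\bra{\mathbf 1}$. Write $x=\ket{c_{[0,m-1]}}$, $\sigma(c)=\braket{\mathbf 1|x}$ (nonzero by hypothesis), and $x^\perp=x-\sigma\ket{\mathbf 1}$. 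Then
\[
(V_\theta\oplus I_h)\ket c=\bigl(e^{\mathrm{i}\theta}\sigma\ket{\mathbf 1}+x^\perp,\ \ket{c_{[m,m+h-1]}}\bigr).
\]
The sum of the first $m$ components is $\sqrt m\,e^{\mathrm{i}\theta}\sigma\neq 0$, so the component-sum property is inherited.

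For disjointness, suppose $(V_\theta\oplus I)\ket c=e^{\mathrm{i}\lambda}(V_{\theta'}\oplus I)\ket{c'}$ for entries $c,c'$ of $C$. Comparing components along $\ket{\mathbf 1}$ gives $e^{\mathrm{i}\theta}\sigma(c)=e^{\mathrm{i}\lambda}e^{\mathrm{i}\theta'}\sigma(c')$. Comparing the remaining components gives $(x^\perp,c_{[m,\cdot]})=e^{\mathrm{i}\lambda}(x'^\perp,c'_{[m,\cdot]})$.

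The main obstacle is the degenerate case where $x^\perp=0$ and $c_{[m,\cdot]}=0$. Then $c$ is a multiple of $\ket{\mathbf 1}$, the phase $\lambda$ is unconstrained, and the collision cannot be excluded this way. At most one entry, up to phase, can equal $\ket{\mathbf 1}\oplus 0$, by maximum cardinality. If such an entry exists, I would handle it by instead using the phase twist along a generic unit vector $\ket{g}$ with $\braket{g|x}\ne0$ for all entries, or by composing with a fixed generic unitary first. I would mention this only briefly.

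Outside the degenerate case, $\lambda$ is determined by the second comparison (or it forces $c=c'$ up to phase, since the pair $(|\sigma|,\ \text{rest})$ then matches). Then $e^{\mathrm{i}(\theta-\theta')}$ is determined by $c,c'$ through $\sigma(c)/\sigma(c')$ and $\lambda$. Since $C$ has finitely many pairs $(c,c')$, only finitely many values of $\theta-\theta'$ modulo $2\pi$ are forbidden. Let $\Theta$ be this finite set of bad differences. I would pick inductively $\theta_1,\theta_2,\dots$ so that no pairwise difference lies in $\Theta\setminus\{0\}$; this is possible because each step excludes only finitely many values from a continuum. The squares $C^{(\theta_k)}$ are then pairwise disjoint, which gives infinitely many with the required properties.
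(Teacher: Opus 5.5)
Your proposal is correct and is essentially the paper's argument: the paper uses $F_{m,h}^{\theta}=\bigl(\operatorname{diag}(e^{i\theta},1,\dots,1)F_m\bigr)\oplus I_h$, and since $\operatorname{diag}(e^{i\theta},1,\dots,1)F_m=F_mV_\theta$, its key identity $(F_m^{\theta_2})^{\dagger}F_m^{\theta_1}=I_m+\frac{1}{m}\bigl(e^{i(\theta_1-\theta_2)}-1\bigr)J_m$ is exactly your $V_{\theta_1-\theta_2}$. The rest of the paper's proof matches yours: the phase $\lambda$ is pinned down by the part orthogonal to $\ket{\mathbf 1}$, and then $\theta_1-\theta_2$ is pinned down by the nonzero component sums. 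The degenerate entry proportional to $(1,\dots,1,0,\dots,0)^{\top}$ is handled in the paper by a preliminary phase $\operatorname{diag}(1,e^{i\lambda_0},1,\dots,1)\oplus I_h$, which is your ``fixed generic unitary'' fix; your normalization without the extra $F_m$ has the small advantage that component sums are preserved automatically.
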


\begin{proof}
Let $A_0$ be an  MCIQLS$(m+h,h)$ such that for every nonempty entry, the sum of its first $m$ components is nonzero.
If in $A_0$ the first $m$ components of some nonempty entry are all equal and nonzero,
then we can multiply the second component of each entry in $A_0$ by a suitable complex number $e^{i\lambda_0}$ with $\lambda_0 \in (0, 2\pi)$  to  ensure that the sum of the first $m$ components of every new entry is nonzero.  That is, we apply to $A_0$ the unitary transformation $\mathcal{U}$ corresponding to the unitary matrix $U = \operatorname{diag}(1, e^{i\lambda_0}, 1, \dots, 1)$ with respect to the computational basis. 
Since a unitary transformation is invertible and preserves inner products, and since both $\operatorname{span}\{\ket{0}, \ket{1}, \dots, \ket{m-1}\}$ and $\operatorname{span}\{\ket{m}, \ket{m+1}, \dots, \ket{m+h-1}\}$ are 
$\mathcal{U}$-subspaces, 
it follows that the resulting array $A=(\ket{a_{i,j}})$ remains an MCIQLS($m+h,h$) with the same transversal properties as before.

       Construct  new unitary matrices of the form
      \begin{align*}
            F_{m,h}^{\theta} = \begin{pmatrix}
                F_m^{\theta} & 0\\
                0 & I_{h}\\
            \end{pmatrix}, 
        \end{align*}
where $I_{h}$ is the $h \times h$ identity matrix, $F_m^{\theta}= \operatorname{diag}(e^{i\theta}, 1,1, \dots, 1)F_m$ with $F_m$ being the Fourier matrix of order $m$,
and $\theta \in [0, 2\pi)$. 
Then apply to $A$ the unitary transformations $\mathcal{F}_{v,h}^{\theta}$  corresponding to the unitary matrix $F_{v,h}^{\theta}$ with respect to the computational basis, and  denote the resulting MCIQLS$(m+h,h)$ by $A_{\theta}$. Similarly, $A_{\theta}$ is also an MCIQLS$(m+h,h)$ with the same transversal properties as before.

Consider the sum of the first $m$ components of any entry $F_{m,h}^{\theta}\ket{a_{i,j}}$ in $A_{\theta}$. Clearly, its $i$-th component is the product of the $i$-th row vector of $F_{m,h}^{\theta}$ with $\ket{a_{i,j}}$; therefore, the sum of the first $m$ components equals the product of the sum vector of the first $m$ row vectors of $F_{m,h}^{\theta}$ with $\ket{a_{i,j}}$. Note that when $s \in \{1,2,\dots, m-1\}$, we have $\sum\limits_{t=0}^{m-1} \omega^{st} = 0$. Consequently, the sum vector of the first $m$ row vectors of $F_{m,h}^{\theta}$ is
\[
\frac{1}{\sqrt{m}} \bigl( e^{i\theta} - 1 + m,\; e^{i\theta} - 1,\; \dots,\; e^{i\theta} - 1,\; 0,\; \dots,\; 0 \bigr),
\]
and its product with $\ket{a_{i,j}}$ equals the product of the vector
\(
\frac{1}{\sqrt{m}} \bigl( e^{i\theta} - 1 + m,\; e^{i\theta} - 1,\; \dots,\; e^{i\theta} - 1 \bigr)
\)
with $(a_0, a_1, \dots, a_{m-1})$, i.e.,
\[
 \tfrac{1}{\sqrt{m}}(e^{i\theta}-1) \sum_{i=0}^{m-1} a_i + \sqrt{m}\, a_0.
\]
Since $\sum\limits_{i=0}^{m-1} a_i$ is nonzero, one can always choose the value of $\theta$ 
(and there are infinitely many such choices) 
such that the sum of the first $m$ components of every nonempty entry of $A_{\theta}$ is nonzero.

We now show that one can always choose distinct $\theta_1, \theta_2 \in [0, 2\pi)$  such that the new MCQLS$(m+h,h)$ $A_{\theta_1}$ and $A_{\theta_2}$, obtained by acting on $A$ with the unitary transformations $\mathcal{F}_{v,h}^{\theta _1}$  and $\mathcal{F}_{v,h}^{\theta_2}$  respectively, are disjoint; and moreover, the sum of the first $m$ components of every nonempty entry in  $A_{\theta_1}$ and $A_{\theta_2}$ is nonzero. From the above analysis, it suffices to explain how the two arrays can be made disjoint.

Consider
\begin{align} \label{eq}
F_{m,h}^{\theta _1}\alpha=F_{m,h}^{ \theta _2 }\beta e^{i\lambda},
\end{align}
where $\theta_1, \theta_2, \lambda \in [0,2\pi)$ with $\theta_1 \neq \theta_2$ are to be determined, and $\alpha = (a_0, a_1, \dots, a_{m+h-1})^{T}$, $\beta = (b_0, b_1, \dots, b_{m+h-1})^{T}$ are two entries in $A$.

 When $\alpha = \beta$, we have $$F_{m,h}^{\theta _i}\alpha=\begin{pmatrix}F_{m}^{\theta _i}\alpha _{[0,m-1]}\\
 \alpha _{[m,m+h-1]}\end{pmatrix}$$ for $i = 1,2$. If $\alpha_{[m,m+h-1]} \neq \mathbf{0}_{h}$, then the equality (\ref{eq})
 implies $\lambda = 0$ and $F_{m}^{\theta_1}\alpha_{[0,m-1]} = F_{m}^{\theta_2}\alpha_{[0,m-1]} $.
 However, in the latter equality,
 the first components of the vectors on both sides are unequal when $\sum\limits_{i=0}^{m-1} a_i \neq 0$ and $\theta_1 \neq \theta_2$. 
If $\alpha_{[m,m+h-1]} = \mathbf{0}_{h}$ or $h=0$, then $\alpha_{[0,m-1]}\neq  \mathbf{0}_{m}$ and the equality $F_{m,h}^{\theta_1} \alpha = F_{m,h}^{\theta_2}\alpha e^{i\lambda}$ implies $\lambda = \theta_1 - \theta_2$ or $\lambda = 2\pi + \theta_1 - \theta_2$, together with $F_{m}^{\theta_1} \alpha_{[0,m-1]} = F_{m}^{\theta_2} \alpha_{[0,m-1]} e^{i\lambda}$. Note that in this case $\alpha_{[0,m-1]} \notin \operatorname{span}\{(1,1,\dots,1)^{T}\}$, while the $m$ row vectors of $F_{m}^0 = F_v$ form an orthonormal basis of $\mathcal{H}_{m}$. Consequently, the last $m-1$ components of $F_{m}^{\theta _1}\alpha _{[0,m-1]}$ (equivalently, of $F_{m}^{\theta _2}\alpha _{[0,m-1]}$ ) cannot all be zero. Equating these two vectors again yields $\lambda = 0$, a contradiction.

When $\alpha\neq \beta$, 
 multiplying both sides of the equation (\ref{eq}) on the left by the conjugate transpose ${F_{m,h}^{\theta _2}}^{\dagger}$ of ${F_{m,h}^{\theta _2}}$, we obviously obtain $${F_{m,h}^{\theta _2}}^{\dagger}F_{m,h}^{\theta _1}\alpha=\beta e^{i\lambda}.$$
       Write $F_{m,h}^{\theta_1,\theta_2} = (F_{m,h}^{\theta_2})^\dagger F_{m,h}^{\theta_1}$, $F_{m}^{\theta_1,\theta_2} = (F_{m}^{\theta_2})^\dagger F_{m}^{\theta_1}$.
 Note that
\begin{align*} & F_m^{(\theta _1,\theta _2) } = {F_{m}^{\theta _2}}^{\dagger}F_{m}^{\theta _1}\\
        =&\frac{1}{m}\begin{pmatrix}
            e^{i(\theta_1-\theta_2)} + m - 1 & e^{i(\theta_1-\theta_2)} - 1 & \dots & e^{i(\theta_1-\theta_2)} - 1 & \dots & e^{i(\theta_1-\theta_2)} - 1\\
            e^{i(\theta_1-\theta_2)} - 1 & e^{i(\theta_1-\theta_2)} + m - 1 & \dots & e^{i(\theta_1-\theta_2)} - 1 & \dots & e^{i(\theta_1-\theta_2)} - 1\\
            \vdots & \vdots &  & \vdots &  & \vdots\\
            e^{i(\theta_1-\theta_2)} - 1 & e^{i(\theta_1-\theta_2)} - 1 & \dots & e^{i(\theta_1-\theta_2)} + m - 1 & \dots & e^{i(\theta_1-\theta_2)} - 1\\
            \vdots & \vdots &  & \vdots &  & \vdots\\
            e^{i(\theta_1-\theta_2)} - 1 & e^{i(\theta_1-\theta_2)} - 1 & \dots & e^{i(\theta_1-\theta_2)} - 1 & \dots & e^{i(\theta_1-\theta_2)} + m - 1
        \end{pmatrix}\\
        =&\frac{1}{m} (e^{i(\theta_1-\theta_2)} - 1) J_m + I_m.
        \end{align*}where $J_m$ is the $m \times m$ all-ones matrix.
   So we have that
    \begin{align*}
    F_{m,h}^{(\theta_1, \theta_2)} &= \begin{pmatrix}
        {F_m^{\theta_2}}^{\dagger} & 0\\
        0 & I_h\\
    \end{pmatrix}
    \begin{pmatrix}
        F_m^{\theta_1}  & 0\\
        0 & I_h\\
    \end{pmatrix}\\
    &= \begin{pmatrix}
        {F_m^{\theta_2}}^{\dagger} F_m^{\theta_1} & 0\\
        0 & I_h\\
    \end{pmatrix}\\
    &= \begin{pmatrix}
        \frac{1}{m}(e^{i(\theta_1-\theta_2)} - 1) J_m + I_m & 0\\
        0 & I_h\\
    \end{pmatrix}.
    \end{align*}
    Hence $\begin{pmatrix}
        \frac{1}{m}(e^{i(\theta_1-\theta_2)} - 1) J_m + I_m & 0\\
        0 & I_h\\
    \end{pmatrix}
    \begin{pmatrix}
        \alpha_{[0,m-1]}\\ \alpha_{[m,m+h-1]} 
    \end{pmatrix} = e^{i \lambda} \begin{pmatrix}
        \beta_{[0,m-1]}\\ \beta_{[m,m+h-1]}
    \end{pmatrix}$, i.e.,
    \begin{align*}
    \begin{cases}
        (\frac{1}{m}(e^{i(\theta_1-\theta_2)} - 1) J_m + I_m) \alpha_{[0,m-1]} = e^{i \lambda}  \beta_{[0,m-1]},\\
        \alpha_{[m,m+h-1]} = e^{i \lambda} \beta_{[m,m+h-1]}.
    \end{cases}
    \end{align*}
    or
    \begin{align}
    \begin{cases} \label{(1a)}
        \frac{1}{m}(e^{i(\theta_1-\theta_2)} - 1) \displaystyle\sum_{i=0}^{m-1}a_i\begin{pmatrix}
        1\\
        \vdots \\ 1
    \end{pmatrix} 
    = e^{i \lambda} \beta_{[0,m-1]} - \alpha_{[0,m-1]},\\
         \alpha_{[m,m+h-1]} = e^{i \lambda} \beta_{[m,m+h-1]}.
    \end{cases}
    \end{align}
  Since $\sum\limits_{i=0}^{m-1} a_i \neq 0$, Equation (\ref{(1a)}) implies that all components of the vector  $e^{i \lambda} \beta_{[0,m-1]} - \alpha_{[0,m-1]}$ are equal. 
In this case, if $\alpha_{[m,m+h-1]} = \beta_{[m,m+h-1]} = \mathbf{0}_{h}$ or $h = 0$, and the components of $\beta_{[0,m-1]}\notin \operatorname{span}\{(1,1,\dots,1)^{T}\}$, we may assume without loss of generality that $b_0 \neq b_1$. Then the equality \(e^{i\lambda}b_0 - a_0 = e^{i\lambda}b_1 - a_1\) determines at most one $\lambda$ that can possibly make the first equality in Equation \ref{(1a)}  hold. However, even if such a $\lambda$ exists, once $e^{i\lambda}\beta_{[0,m-1]} - \alpha_{[0,m-1]}$ is fixed, one can always choose $\theta_1, \theta_2$ appropriately so that the first equality in  Equation \ref{(1a)}   does not hold. 
Finally, if both $\alpha_{[0,m-1]}$ and $\beta_{[0,m-1]}$ are nonzero vectors, then there is at most one $\lambda$ such that  the second equality in  Equation \ref{(1a)}   holds. After such a $\lambda$ is determined, one can again adjust the values of $\theta_1, \theta_2$ appropriately so that the first equality in  \ref{(1a)} fails. 

Consequently, by choosing $\theta_1$ and $\theta_2$ appropriately,  $F_{m,h}^{\theta_1}\alpha$ and $F_{m,h}^{\theta_2}\beta$ are distinct for any two entries $\alpha$ and $\beta $ in  $A$. 
Then, we try to obtain more pairwise disjoint MCIQLS$(m)$ with the sum of the first $m$ components of every nonempty entry in each of these arrays is nonzero. Similar to the previous analysis, starting from $A$, we can find a suitable $\theta_3 \in [0, 2\pi) \setminus \{\theta_1, \theta_2\}$ such that $A_{\theta_3}$ is disjoint from both $A_{\theta_1}$ and $A_{\theta_2}$, and the sum of the first $m$ components of every entry in $A_{\theta_3}$ is nonzero.
Continuing this process, we can select infinitely many distinct parameters $\theta_1, \theta_2, \dots, \theta_i, \dots$ such that the corresponding $A_{\theta_1}, A_{\theta_2}, \dots, A_{\theta_i}, \dots$ are pairwise disjoint MCIQLS$(m)$s.
Clearly,  $A_{\theta_1}, A_{\theta_2}, \dots, A_{\theta_i}, \dots$ have the same transversal  properties as $A$ and $A_0$.
\qed
\end{proof}

\begin{remark}
When $h=1$, suppose we have obtained finitely many pairwise disjoint MCIQLS$(m+1,1)$s such that for each of them, the sum of the first $m$ components is nonzero. Then one can always find a complex number $y$ with $|y|=1$ such that multiplying the last component of every nonempty entry of these MCIQLS$(m+1,1)$s by $y$ yields MCIQLS$(m+1,1)$s that are still pairwise disjoint. Moreover, after this multiplication, for each entry, both the sum of its first $m$ components and the sum of all its components are nonzero.-

\end{remark}

\begin{lemma}\label{disjoint MCQLS$(v)$}
For any integer $m\geq 4$, there exist infinitely many pairwise disjoint MCQLS$(m)$s. 
\end{lemma}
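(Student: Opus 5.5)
The plan is to combine Theorem~\ref{MCQLS(n)} with Lemma~\ref{many MCIDQLS$(m+h;h)$} in the special case $h=0$, where an $\mathrm{MCIQLS}(m+0,0)$ is just an $\mathrm{MCQLS}(m)$. Lemma~\ref{many MCIDQLS$(m+h;h)$} needs an initial $\mathrm{MCQLS}(m)$ in which every entry has nonzero component sum. So the only real work is to produce such a square from an arbitrary $\mathrm{MCQLS}(m)$, $m\ge 4$, which exists by Theorem~\ref{MCQLS(n)}.

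\emph{Step 1.} Let $A_0=(\ket{a_{i,j}})$ be an $\mathrm{MCQLS}(m)$ given by Theorem~\ref{MCQLS(n)}. Apply a diagonal unitary $U=\operatorname{diag}(e^{\mathrm{i}\lambda_0},e^{\mathrm{i}\lambda_1},\dots,e^{\mathrm{i}\lambda_{m-1}})$ to every entry. Because $U$ is unitary, orthonormality of rows and columns is preserved. Because $U$ is invertible and linear, $U\ket{a}=e^{\mathrm{i}\phi}U\ket{b}$ forces $\ket{a}=e^{\mathrm{i}\phi}\ket{b}$, so the cardinality stays $m^2$. Thus $UA_0$ is again an $\mathrm{MCQLS}(m)$.

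\emph{Step 2.} Choose the phases $\lambda_t$ so that every entry of $UA_0$ has nonzero component sum. For a fixed entry $\ket{a}=(a_0,\dots,a_{m-1})^{\top}\neq \mathbf 0$, the component sum of $U\ket{a}$ is $\sum_t a_t e^{\mathrm{i}\lambda_t}$. This is a nonzero trigonometric polynomial in $(\lambda_0,\dots,\lambda_{m-1})$, since some $a_t\neq 0$ and the monomials $e^{\mathrm{i}\lambda_t}$ are linearly independent. Its zero set in the torus $[0,2\pi)^m$ therefore has measure zero. There are only $m^2$ entries, so the union of these zero sets is still null, and a phase vector outside it exists. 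Concretely, one can also fix an index $t$ with $a_t\neq0$ for each entry and argue that at most finitely many bad values of a single phase need to be avoided at a time, varying one coordinate at a time. I expect this step to be the only point needing care, and the measure-zero (or one-coordinate-at-a-time) argument handles it.

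\emph{Step 3.} Apply Lemma~\ref{many MCIDQLS$(m+h;h)$} with $h=0$ to the $\mathrm{MCQLS}(m)$ from Step~2. In that proof the blocks involving $I_h$ disappear, and the transformations $F_m^{\theta}$ yield squares $A_{\theta_1},A_{\theta_2},\dots$. These are pairwise disjoint $\mathrm{MCQLS}(m)$s, each still having nonzero component sums. This gives infinitely many pairwise disjoint $\mathrm{MCQLS}(m)$s for every $m\ge4$. As a by-product, the family inherits any transversal property of the initial square, for example idempotence when one starts from an idempotent $\mathrm{MCQLS}(m)$, because all the transformations used are unitary and act entrywise.
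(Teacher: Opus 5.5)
Your proposal is correct and takes essentially the paper's route: take an $\mathrm{MCQLS}(m)$, apply a diagonal phase unitary so that every entry has nonzero component sum, and then invoke Lemma~\ref{many MCIDQLS$(m+h;h)$} with $h=0$. The only difference is the starting square. The paper uses the explicit square of \cite{cite19}, in which every entry has all components nonzero, so a single phase $\operatorname{diag}(x,1,\dots,1)$ with finitely many forbidden values of $x$ suffices. Your generic choice of all $m$ phases instead works for an arbitrary starting $\mathrm{MCQLS}(m)$ from Theorem~\ref{MCQLS(n)}.
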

\begin{proof}
Observe the  MCQLS$(m)$ $A=(\ket{a_{i,j}})$ constructed in \cite{cite19}, where $$\ket{a_{i,j}}=\frac{1}{\sqrt{m}}(1,\omega^{2i+j}, \omega^{i+2j}, \omega^{3(i+j)}, \dots, \omega^{(m-1)(i+j)})^T,$$  $\omega$ is a primitive $m$-th root of unity.
Note that all components of every $\ket{a_{i,j}}$ are nonzero. If some entry in $A$ has a zero sum of components, we may multiply the first component of every entry of $A$ by a suitable complex number $x$ with $|x|=1$ and $x \neq 1$. That is, applying the unitary transformation corresponding to the unitary matrix $\operatorname{diag}(x, 1, \dots, 1)$ to $A$ yields a new MCQLS$(m)$  in which the sum of components of each entry is nonzero. Then applying Lemma \ref{many MCIDQLS$(m+h;h)$}, we can obtain infinitely many pairwise disjoint MCQLS$(m)$. \qed 
\end{proof}

\begin{lemma}\label{disjoint MCIDQLS$(v)$}
For any odd integer $m \ge 7$, or for any even square $m \ge 16$, there exist infinitely many pairwise disjoint idempotent MCQLS$(m)$s.
Furthermore, for any odd $m$ with $m \geq 13$ and $3 \nmid m$, or for any even square $m \geq 16$, there exist infinitely many pairwise disjoint MCDQLS$(m)$s.
\end{lemma}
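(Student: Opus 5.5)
The plan is to combine the existence results already obtained for a single idempotent or diagonal MCQLS with the disjointness machinery of Lemma~\ref{many MCIDQLS$(m+h;h)$} (with $h=0$). That lemma takes an array in which every entry has nonzero component sum and produces infinitely many pairwise disjoint copies $A_{\theta}$, each obtained by applying one fixed unitary $F_m^{\theta}$. Since a unitary applied entrywise preserves all orthogonality relations among entries, each copy keeps the same transversal structure. In particular, idempotence and the diagonal property are preserved. So for each $m$ in the statement it suffices to exhibit one idempotent MCQLS$(m)$ (respectively one MCDQLS$(m)$) whose entries all have nonzero component sum. If needed, I will first apply a diagonal phase unitary $\operatorname{diag}(x,1,\dots,1)$, as in the proof of Lemma~\ref{disjoint MCQLS$(v)$}.

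\textbf{Odd orders.} For odd $m\ge 7$, I take the idempotent MCQLS$(m)$ of Lemma~\ref{MCIDQLS-odd}, built from a complete mapping via Theorem~\ref{Construction--complete mapping}. Its entries are $\frac{1}{\sqrt m}(\omega^{tj+i\mu(t)})_t$, so every component is nonzero of modulus $1/\sqrt m$.

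There are finitely many entries. For each entry, the component sum after multiplying the first component by $x$ equals $x a_0+\sum_{t\ge1}a_t$. Since $a_0\neq0$, this vanishes for at most one $x$ on the unit circle. Hence some $x$ with $|x|=1$ makes all sums nonzero, and the diagonal unitary preserves idempotence, maximum cardinality (it is invertible), and the QLS property. Lemma~\ref{many MCIDQLS$(m+h;h)$} then gives the result.

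For the MCDQLS claim with odd $m\ge13$ and $3\nmid m$, I use the strong complete mapping constructions. If $m$ is prime, Lemma~\ref{MCDQLS-odd prime} applies. If $m$ is composite, then $m\ge25$ because its least prime factor is at least $5$, so Lemma~\ref{MCDQLS-odd} applies. Both yield arrays whose components are roots of unity over $\sqrt m$, and the same phase adjustment works.

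\textbf{Even squares.} For even squares $m=n^2\ge16$, note that $n\ge4$ is even, so Theorem~\ref{DQLS-square} provides an MCDQLS$(n^2)$, which is in particular idempotent. This is the step I expect to need care, since the entries here are tensor products $\ket{u_{i,k}}\otimes\ket{v_{j,l}}$ of row-quantum Latin rectangle entries, rearranged by row and column permutations, and they could have zero components.

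However, the phase trick only needs the modified component to be nonzero in every entry. Alternatively, I can apply a generic diagonal unitary $\operatorname{diag}(x_0,\dots,x_{m-1})$. For each entry the sum $\sum_t x_t a_t$ is a nontrivial linear form in the $x_t$, since the entry is nonzero. Its zero set on the torus is a proper closed subset. A finite union of such sets cannot cover the torus, so a suitable choice exists. This argument handles all cases uniformly, including the odd ones, and avoids inspecting components. After this adjustment, Lemma~\ref{many MCIDQLS$(m+h;h)$} yields infinitely many pairwise disjoint idempotent MCQLS$(m)$s and MCDQLS$(m)$s, as claimed.
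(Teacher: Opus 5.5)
Your proposal is correct and is essentially the paper's argument. You fix one idempotent (resp.\ diagonal) MCQLS$(m)$, make every entry's component sum nonzero with an entrywise diagonal phase unitary, and apply Lemma~\ref{many MCIDQLS$(m+h;h)$} with $h=0$, using that unitaries preserve transversals and cardinality.

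For odd $m$ you use exactly the complete-mapping arrays the paper uses. You also spell out the odd MCDQLS case via Lemmas~\ref{MCDQLS-odd prime} and~\ref{MCDQLS-odd}, which the paper's proof leaves implicit.

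For even squares the paper simply observes that its base arrays consist of tensor products of entries of the MCQLS$(n)$ of \cite{cite19}. Every component is therefore nonzero and the first component is the constant $1/n$, so the single first-coordinate phase suffices and your worry about zero components does not arise.

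Your black-box alternative with a generic $\operatorname{diag}(x_0,\dots,x_{m-1})$ is fine and slightly more general: it shows the nonzero-sum hypothesis can be arranged for any MCQLS$(m)$. However, the reason you give is not valid as stated, since a finite union of proper closed subsets can cover a torus (two closed half-circles cover $S^1$). There are two easy repairs:
\begin{enumerate}
\item Note that each zero set meets every circle in the $x_{t_0}$-direction (where $a_{t_0}\neq 0$) in at most one point. It is therefore nowhere dense (or null), and a finite union of such sets cannot be the whole torus.
\item More simply, take $x_t=z^t$. Each entry then gives a nonzero polynomial $\sum_t a_t z^t$ of degree at most $m-1$, so only finitely many $z$ on the unit circle are excluded.
\end{enumerate}
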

\begin{proof}
 When $m$ is odd and $m \ge 7$, observe the  idempotent  MCQLS$(m)$  constructed in Lemma \ref{MCIDQLS-odd}. Clearly  the first component of every entry  is $1$. Similar to the discussion in the proof of  Lemma \ref{disjoint MCQLS$(v)$}, we can obtain infinitely many pairwise disjoint idempotent  MCQLS$(m)$s by applying Lemma \ref{many MCIDQLS$(m+h;h)$}.
 
 When $m$ is an even squareand $m \ge 16$, Observe the idempotent MCQLS$(m)$ constructed in Lemma~\ref{MCIDQLS(v^2)}, whose entries are all tensor products of the entries of the MCQLS$(m)$ constructed in \cite{cite19}. Clearly  the first component of every entry  is $1$. we can also obtain infinitely many pairwise disjoint idempotent  MCQLS$(m)$s by applying Lemma \ref{many MCIDQLS$(m+h;h)$}.\qed 
 
 Similarly,  the conlusion  also holds for MCDQLS$(m)$ by  Lemma~\ref{MCIDQLS(v^2)}.
lemma \ref{}, we can obtain infinitely many pairwise 
\end{proof}

\begin{lemma}\label{disjoint MCIDQLS$(v+1;1)$}
For any integer $m\geq 3$, there exist infinitely many pairwise disjoint MCIQLS$(m+1,1)$s.
\end{lemma}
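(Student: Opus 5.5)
By Lemma~\ref{many MCIDQLS$(m+h;h)$} with $h=1$, it suffices to exhibit a single MCIQLS$(m+1,1)$ in which every nonempty entry has nonzero sum of its first $m$ components. The plan is to build this square from a Fourier-type construction in the spirit of Theorem~\ref{Construction--complete mapping}, and then fix the component sums with a diagonal phase as in Lemma~\ref{disjoint MCQLS$(v)$}.

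\textbf{The construction.} Take a QLS$(m+1)$ $M=(\ket{\phi_{i,j}})$ with $\ket{\phi_{i,j}}=\frac{1}{\sqrt{m+1}}(\omega^{tj+i\mu(t)})_{t}$, where $\omega$ is a primitive $(m+1)$-th root of unity and $\mu$ is a permutation of $\mathbb{Z}_{m+1}$. Such a square has full cardinality once $(*)$ holds; for $m+1\ge 4$ we may instead simply use Theorem~\ref{MCQLS(n)}. We cannot just delete a row and column of this $M$, because every component of every entry is nonzero, so no row can lie in $\operatorname{span}\{\ket{0},\dots,\ket{m-1}\}$.

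\textbf{An alternative I would try first.} Start from an MCQLS$(m)$ $B=(\ket{b_{k,l}})$ for $m\ge 4$, from Theorem~\ref{MCQLS(n)} or Lemma~\ref{disjoint MCQLS$(v)$}, padded by a zero coordinate. Then carry out a quantum analogue of the classical prolongation: select a transversal-like set of $m$ cells $(k,\tau(k))$, one in each row and column of $B$. For each such cell, split its vector $\ket{b}$ into two orthonormal vectors $\cos\theta_k\ket{b}+\sin\theta_k\ket{m}$ and $-\sin\theta_k\ket{b}+\cos\theta_k\ket{m}$. Put the first in the cell, and send the second to the new last column at row $k$ and to the new last row at column $\tau(k)$.

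This requires the vectors moved to the last column to be orthonormal. Those are the projections removed, so the cells must carry mutually orthogonal vectors of $B$, and we need $B$ to have a transversal. A transversal comes from Lemma~\ref{MCPDQLS-odd} or Lemma~\ref{MCIDQLS(v^2)}, or from the idempotent constructions of Lemma~\ref{MCIDQLS-odd}. Unfortunately, condition~(3) of the IQLS definition requires the last row and column to lie in $\operatorname{span}\{\ket{0},\dots,\ket{m-1}\}$, whereas the split vectors $-\sin\theta_k\ket{b}+\cos\theta_k\ket{m}$ have a nonzero $\ket{m}$ component, so the splitting as described is not what is needed.

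\textbf{The correct, simpler version.} Take an idempotent MCQLS$(m)$ $B$ and the $m\times m$ padded array. Move the diagonal vector $\ket{b_{k,k}}$ to cells $(k,m)$ and $(m,k)$, and place $\ket{m}$ in cell $(k,k)$. This repeats vectors, destroying maximum cardinality. So instead use a generic rotation: in cell $(k,k)$ put $\cos\theta\ket{b_{k,k}}+\sin\theta\ket{m}$; in cell $(k,m)$ put $-\sin\theta\ket{b_{k,k}}+\cos\theta\ket{m}$ (not allowed by (3)). Hence the definition forces the last row and column entries to have no $\ket{m}$ component, and the $\ket{m}$ direction must be supplied inside the $m\times m$ block. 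Each row $k<m$ then consists of $m$ vectors in the top block plus one in the last column, together an orthonormal basis of $\mathcal{H}_{m+1}$. Since the last-column vector lies in $\operatorname{span}\{\ket{0},\dots,\ket{m-1}\}$, the $\ket{m}$ direction is distributed among the top-block entries.

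\textbf{The final plan.} Begin with a QLS$(m)$ having a transversal, for example an idempotent MCQLS$(m)$ from Lemma~\ref{MCIDQLS-odd} for odd $m\ge7$, or an $m\times m$ classical-to-quantum variant otherwise. Define the entries by $(k,m)\mapsto\ket{b_{k,k}}$, $(m,k)\mapsto\ket{b_{k,k}}$, and $(k,k)\mapsto\ket{m}$. Then apply in every row a row-dependent unitary on $\operatorname{span}\{\ket{b_{k,k}}^\perp\text{-stuff},\ket{m}\}$ mixing the diagonal $\ket{m}$ with the off-diagonal entries, chosen generically so that columns stay orthonormal and all vectors become distinct. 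The last column and row then carry the distinct diagonal vectors of $B$.

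The main obstacle is making all $m^2+2m$ vectors distinct while keeping column orthonormality under the mixing. Two ways to handle the leftover cardinality collisions are a genericity/dimension-count argument, and a direct Fourier choice in which each entry is $\frac{1}{\sqrt{m+1}}$ times roots of unity. Small $m=3$ would be handled by an explicit example. Finally, apply the Remark following Lemma~\ref{many MCIDQLS$(m+h;h)$} with a diagonal phase to make the first-$m$ sums nonzero, and invoke Lemma~\ref{many MCIDQLS$(m+h;h)$}.
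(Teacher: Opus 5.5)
\textbf{There is a genuine gap.} Your reduction is the paper's: it suffices to build one MCIQLS$(m+1,1)$ whose nonempty entries all have nonzero first-$m$ sums. But the construction you settle on is not a proof. The step you call \emph{the main obstacle} is the whole content, and as set up it cannot succeed.

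\textbf{Why the final plan fails.}
\begin{itemize}
\item Suppose each row $k$ gets its own unitary $U_k$, and write $x_{k,l}$ for the entry in cell $(k,l)$. Column $l$ stays orthonormal only if $\langle U_k x_{k,l}, U_{k'}x_{k',l}\rangle=0$ for all $k\neq k'$. You never solve this system of cross-row constraints.
\item A common unitary satisfies these constraints, but it preserves every coincidence up to phase, so it cannot help. Hence no genericity or dimension count applies until a good configuration has been exhibited, and exhibiting one is the statement itself.
\item Your prolongation puts the same vector $\ket{b_{k,k}}$ in cells $(k,m)$ and $(m,k)$. Your mixing in row $k$ fixes $\ket{b_{k,k}}$ and leaves row $m$ alone. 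So at least $m$ coincidences survive whatever you choose.
\item For $m=3$, and wherever you fall back on a classical square, the top block has only $m$ distinct vectors, since every QLS$(3)$ has cardinality $3$. The promised explicit example is not given.
\item The Remark you invoke for the component sums assumes the first-$m$ sums are already nonzero, so it does not produce them.
\end{itemize}

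\textbf{The missing idea.} You dismissed it in your first paragraph. You do not need a whole row or column of an MCQLS$(m+1)$ to lie in $\operatorname{span}\{\ket{0},\dots,\ket{m-1}\}$. You only need a single entry equal to $\ket{m}$, and a global unitary arranges that.

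\begin{itemize}
\item Let $A$ be an MCQLS$(m+1)$, which exists by Theorem~\ref{MCQLS(n)} since $m+1\ge 4$. For any unitary $U$, the array $UA$ is again a QLS$(m+1)$ of maximum cardinality. This holds because $U$ preserves inner products, and $\ket{u}=e^{i\theta}\ket{v}$ if and only if $U\ket{u}=e^{i\theta}U\ket{v}$.
\item Choose $U$ sending one entry to $\ket{m}$. The other $m$ entries of its row, and of its column, are orthogonal to $\ket{m}$. Hence each set is an orthonormal basis of $\operatorname{span}\{\ket{0},\dots,\ket{m-1}\}$.
\item Permute rows and columns to move that cell to the lower right corner, then empty it. The result is an MCIQLS$(m+1,1)$.
\item The paper does exactly this with $U=F_{m+1}^{-1}$, applied to the Fourier-type MCQLS$(m+1)$ of Zang et al. Its first row is $f_0,\dots,f_m$, so it becomes the computational basis, with $\ket{m}$ in cell $(0,m)$.
\end{itemize}

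\textbf{The component sums.} By maximum cardinality, no remaining entry is a phase multiple of $\ket{m}$. So each has a nonzero projection onto the first $m$ coordinates. A suitable diagonal phase unitary on those coordinates then makes all finitely many first-$m$ sums nonzero; the paper uses $\operatorname{diag}(e^{ix},I_m)$ with an explicit computation. The amplification lemma then yields infinitely many pairwise disjoint copies.
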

\begin{proof}
Observe the  MCQLS$(m+1)$ $A=(\ket{a_{i,j}})$ constructed in \cite{cite19}, where $$\ket{a_{i,j}}=\frac{1}{\sqrt{m+1}}(1,\omega^{2i+j}, \omega^{i+2j}, \omega^{3(i+j)}, \dots, \omega^{m(i+j)})^T,$$ $\omega$ is a primitive $(m+1)$-th root of unity. The entries in its first row are exactly the $m+1$ column vectors of the Fourier matrix $F_{m+1}$:
$$f_j=\ket{a_{i,j}}=\frac{1}{\sqrt{m+1}}(1,\omega^{j}, \omega^{2j}, \omega^{3j}, \dots, \omega^{mj})^T, j=0,1,\dots,m.$$
Applying the unitary transformation corresponding to $F_{m+1}^{-1} = F_{m+1}^{\dagger}$ to $A$ yields MCQLS$(m+1)$ $A_1 = (\ket{a_{i,j}'}) = (F_{m+1}^{-1}\ket{a_{i,j}})$. Clearly, $\ket{a_{0,m}'} = F_{m+1}^{-1} f_{m} = \ket{m}$, and
    \begin{align*}
        \ket{a_{i,j}'} = \frac{1}{m+1}
        \begin{pmatrix}
            1 & 1 & \dots & 1 & \dots & 1\\
            1 & \omega^{-1} & \dots & \omega^{-i} & \dots & \omega^{-m}\\
            \vdots & \vdots &  & \vdots &  & \vdots\\
            1 & \omega^{-j} & \dots & \omega^{-ij} & \dots & \omega^{-mj}\\
            \vdots & \vdots &  & \vdots &  & \vdots\\
            1 & \omega^{-m} & \dots & \omega^{-im} & \dots & \omega^{-m^2}\\
        \end{pmatrix} 
        \begin{pmatrix}1\\ \omega^{2i+j}\\ \omega^{i+2j}\\\omega^{3(i+j)} \\ \vdots \\ \omega^{m(i+j)}\end{pmatrix}.
    \end{align*}
    Note that when $i+j \neq 0$, $\sum\limits_{t=0}^{m} \omega^{t(i+j)} = 0$. Then for any $(i,j) \neq (0,0)$, the first component of $\ket{a_{i,j}'}$ is
    \[
  \frac{1}{m+1}(  1 + \omega^{2i+j} + \omega^{i+2j}+ \omega^{3(i+j)} + \dots + \omega^{m(i+j)}) = \frac{1}{m+1}\omega^{i+j}(\omega^i-1)(1-\omega^{j}).
    \]
Thus, only when $i = 0$, or $j = 0$ and $(i,j) \neq (0,0)$, the first component of $\ket{a_{i,j}'}$ is zero. 
And when $i \neq 0$, we have
    \begin{align*}
    \ket{a_{i,0}'} &= \frac{1}{m+1}\begin{pmatrix}
        0\\ \sum_{t=3}^{m} \omega^{t(i-1)}+1+\omega^{2i-1}+\omega^{i-2}\\ \vdots\\ \sum_{t=3}^{m} \omega^{t(i-k)}+1+\omega^{2i-k}+\omega^{i-2k}\\ \vdots\\ \sum_{t=3}^{m} \omega^{t(i-m)}+1+\omega^{2i-m}+\omega^{i-2m}
    \end{pmatrix}\\
   & = \frac{1}{m+1}\begin{pmatrix}
        0\\ 1+\omega^{2i-1}+\omega^{i-2}-1-\omega^{i-1}-\omega^{2(i-1)}\\ \vdots\\ 1+\omega^{2i-k}+\omega^{i-2k}-1-\omega^{i-k}-\omega^{2(i-k)}\\ \vdots\\ 1+\omega^{2i-m}+\omega^{i-2m}-1-\omega^{i-m}-\omega^{2(i-m)}
    \end{pmatrix}\\
    &= \frac{1}{m+1}\begin{pmatrix}
        0\\ \omega^{i-2}(\omega^i-1)(\omega-1)\\ \vdots\\ \omega^{i-2k}(\omega^i-1)(\omega^{k}-1)\\ \vdots\\ \omega^{i-2m}(\omega^i-1)(\omega^{m}-1)
    \end{pmatrix}. 
    \end{align*}
The sum of the first $m$ components of $\ket{a_{i,0}'}$ is given by
\begin{align*}
&\frac{1}{m+1}(\omega^i-1)\sum_{k=1}^{m-1} \omega^{i-2k}(\omega^{k}-1)\\
=& \frac{1}{m+1} \omega^{i}(\omega^i-1)\sum_{k=1}^{m-1} (\omega^{-k}-\omega^{-2k})\\
=& \frac{1}{m+1}\omega^{i}(\omega^i-1) \bigl[-1-\omega^{-m}-(-1-\omega^{-2m})\bigr]\\
= &\omega^{i-2m}(\omega^i-1)(1-\omega^{m}) \neq 0.
\end{align*}

   Applying the unitary transformation corresponding to the matrix 
\(U = \begin{pmatrix} e^{ix} & \\ & I_m \end{pmatrix}\) 
(with \(x \in [0, 2\pi)\) to be determined) to \(A_1\) yields an 
\((m+1)\)-dimensional MCQLS matrix \(A_2 = \bigl(\ket{a_{i,j}''}\bigr)\), 
where \(\ket{a_{i,j}''} = U\ket{a_{i,j}'}\). In this new MCQLS$(m+1)$,  $\ket{a_{0,m}''} = \ket{m}$, and when  $i = 0$, or $j = 0$ and $(i,j) \neq (0,0)$, the first component of $\ket{a_{i,j}''}$ is zero while the sum of the first $m$ components of $\ket{a_{i,j}''}$ is nonzero. when  $i,j \neq 0$, the first component of  $\ket{a_{i,j}''}$ is $e^{ix}$.  One can always choose $x$ appropriately so that the sum of the first $m$ components of every element in $A_2$ is nonzero.
By adjusting the rows and columns of $A_2$, we obtain an  MCQLS$(m+1)$ whose lower-right element is $\ket{m}$, and for the remaining entries, the sum of the first $m$ components is nonzero. After $\ket{m}$ becomes an empty entry, we obtain an MCIQLS$(m+1,1)$,
in which the sum of the first $m$ components of every non-empty entry is nonzero.  Finally,   by applying Lemma \ref{many MCIDQLS$(m+h;h)$}, we can obtain infinitely many pairwise disjoint MCIQLS$(m+1,1)$.\qed
\end{proof}

\begin{lemma}\label{disjoint MCQLS$(m)$+ MCIDQLS$(m+1;1)$}
Let $s$ and $t$ be positive integers, and let $m$ be an integer with $m \ge 4$ (or $m \ge 7$ odd, or $m\ge 16$ an even square). There exist $s$ pairwise disjoint MCIQLS$(m+1; 1)$s,  $t$ pairwise disjoint  MCQLS$(m)$s (or idempotent MCQLS$(m)$s), and moreover these  MCQLS$(m)$s (or idempotent MCQLS$(m)$s) are also $m$-disjoint from the $s$  MCIQLS$(m+1, 1)$s.
\end{lemma}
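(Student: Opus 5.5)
The plan is to run the Fourier-twist argument of Lemma~\ref{many MCIDQLS$(m+h;h)$} once, on all the needed arrays at the same time. First I fix seeds. By the proof of Lemma~\ref{disjoint MCIDQLS$(v+1;1)$} there is an MCIQLS$(m+1,1)$ $A$ in which every nonempty entry has nonzero sum of its first $m$ components. By the proofs of Lemma~\ref{disjoint MCQLS$(v)$} and Lemma~\ref{disjoint MCIDQLS$(v)$} there is an MCQLS$(m)$ $B$ (idempotent in the odd $m\ge 7$ and even square $m\ge16$ cases) in which every entry has nonzero component sum. I regard $B$ as an MCIQLS$(m+0,0)$. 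Then Lemma~\ref{many MCIDQLS$(m+h;h)$} with $h=1$ and $h=0$ gives the twisted families $A_\theta=F^{\theta}_{m,1}A$ and $B_\varphi=F^{\varphi}_m B$. Each member is again an MCIQLS$(m+1,1)$, respectively an (idempotent) MCQLS$(m)$, because the unitaries fix both $\operatorname{span}\{\ket 0,\dots,\ket{m-1}\}$ and $\ket m$, so transversal properties are preserved.

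Next I choose parameters $\theta_1,\dots,\theta_s,\varphi_1,\dots,\varphi_t$ one at a time, avoiding a bad set at each step. Disjointness within the $A$-family and within the $B$-family is exactly what Lemma~\ref{many MCIDQLS$(m+h;h)$} gives. What remains is $m$-disjointness between every $A_{\theta_i}$ and every $B_{\varphi_j}$. For an entry $\alpha$ of $A$ and $\beta$ of $B$, I have to exclude
$$\begin{pmatrix}F^{\theta}_m\alpha_{[0,m-1]}\\ \alpha_m\end{pmatrix}=e^{i\lambda}\begin{pmatrix}F^{\varphi}_m\beta\\ 0\end{pmatrix}.$$
If $\alpha_m\neq0$ this is impossible outright. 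If $\alpha_m=0$, then $\alpha_{[0,m-1]}$ is a unit vector with nonzero coordinate sum, and I reduce as in (\ref{(1a)}) to
$$\tfrac1m\bigl(e^{i(\theta-\varphi)}-1\bigr)\Bigl(\sum_k a_k\Bigr)\mathbf 1=e^{i\lambda}\beta-\alpha_{[0,m-1]}.$$
For a fixed pair $(\alpha,\beta)$ this pins down, when it can hold at all, at most finitely many values of $\theta-\varphi$. There are two cases. If $\beta\notin\operatorname{span}\{\mathbf 1\}$, the requirement that all coordinates of $e^{i\lambda}\beta-\alpha$ be equal forces at most one $\lambda$, and then one value of $e^{i(\theta-\varphi)}$. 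If $\beta\in\operatorname{span}\{\mathbf 1\}$, comparing coordinate sums gives $e^{i(\theta-\varphi)}\sum a_k=e^{i\lambda}\sum b_k$, and together with the coordinate equations this gives a finite set.

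Since $A$ and $B$ have finitely many entries, each previously chosen parameter forbids only finitely many values for the next one. The nonzero-sum condition is also only finitely many exclusions. So an inductive choice from $[0,2\pi)$ always succeeds.

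The step I expect to be the main obstacle is this cross-family analysis, especially the degenerate case where $\alpha_{[0,m-1]}$ or $\beta$ is proportional to $\mathbf 1$. If the case split gets messy, I would first apply a diagonal phase unitary $\operatorname{diag}(1,e^{i\lambda_0},1,\dots,1)$ to $B$ (and the analogous one to $A$). This keeps all the properties, and a generic $\lambda_0$ leaves no entry of $A$ or $B$ proportional to $\mathbf 1$ on its first $m$ coordinates. After that the single-$\lambda$ argument applies uniformly.
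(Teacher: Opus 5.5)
Your argument works once one step is corrected, but it takes a different route from the paper.

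\textbf{The step that needs fixing.} You claim that for $\beta\in\operatorname{span}\{\mathbf 1\}$ the coordinate equations leave only finitely many values of $\theta-\varphi$. This is false when $\alpha_{[0,m-1]}$ is also proportional to $\mathbf 1$. Since $F_m(\mathbf 1/\sqrt m)=\ket{0}$, you get $F^{\theta}_m(\mathbf 1/\sqrt m)=e^{i\theta}\ket{0}$ for every $\theta$. Such an $\alpha$ and $\beta$ therefore coincide up to phase after every pair of twists. Nonzero coordinate sums do not rule this out, since $\mathbf 1/\sqrt m$ has sum $\sqrt m$. It is also not hypothetical: the raw squares you start from (the Fourier-type square of Zang et al.\ and those of Theorem~\ref{Construction--complete mapping}) contain $\mathbf 1/\sqrt m$ in cell $(0,0)$. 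If $B$ keeps such an entry, even the family $\{B_\varphi\}$ is not pairwise disjoint.

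So the phase $\operatorname{diag}(1,e^{i\lambda_0},1,\dots,1)$ is not an optional fallback. Apply it to both seeds at the outset, exactly as in the first step of the proof of Lemma~\ref{many MCIDQLS$(m+h;h)$}. A generic $\lambda_0$ keeps every coordinate sum nonzero and leaves no entry proportional to $\mathbf 1$ on its first $m$ coordinates. After that only your first case occurs. Each pair $(\alpha,\beta)$ then forbids at most one value of $\theta-\varphi$ modulo $2\pi$, and your inductive choice goes through.

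\textbf{The paper's route.} The paper avoids the cross-family analysis entirely.
\begin{itemize}
\item It takes $s$ pairwise disjoint MCIQLS$(m+1,1)$s from Lemma~\ref{disjoint MCIDQLS$(v+1;1)$}.
\item It lets $S$ be the set of projections of their nonempty entries onto the first $m$ coordinates, so $|S|\le s[(m+1)^2-1]$.
\item It selects $t+s[(m+1)^2-1]$ members of the infinite pairwise disjoint family of (idempotent) MCQLS$(m)$s from Lemma~\ref{disjoint MCQLS$(v)$} or Lemma~\ref{disjoint MCIDQLS$(v)$}.
\item It discards every member containing a vector of $S$. By pairwise disjointness each vector of $S$ disqualifies at most one member, so $t$ survive.
\end{itemize}

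\textbf{What each approach buys.} The paper's pigeonhole uses the earlier lemmas purely as black boxes and needs nothing about how the infinite families were produced. For that reason it is reused verbatim in the following lemma, where the MCIQLS$(2m,m)$s come from a product construction rather than from twisting a single seed. Your simultaneous-twist approach needs the explicit form $(F^{\varphi}_m)^{\dagger}F^{\theta}_m=I_m+\frac1m(e^{i(\theta-\varphi)}-1)J_m$ and a new cross-family case analysis. In return it gives an explicit one-parameter description of all the arrays with an explicit finite set of forbidden parameters, which is more than the lemma needs.
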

\begin{proof}
By Lemma \ref{disjoint MCIDQLS$(v+1;1)$}, there exist $s$ pairwise disjoint  MCIQLS$(m+1, 1)$s. 
Let $S$ denote the set of projection vectors obtained by projecting all entries contained in the $s$ MCIQLS$(m+1, 1)$s onto the first $m$ coordinates. Then $|S| \leq s \cdot [(m+1)^2 - 1]$. 
By Lemma \ref{disjoint MCQLS$(v)$} (or Lemma \ref{disjoint MCIDQLS$(v)$}),  there are infinitely many pairwise disjoint  MCQLS$(m)$s (or idempotent MCQLS$(m)$s). Select $t + s[(m+1)^2 - 1]$  of them. 
By the pigeonhole principle, at most $s[(m+1)^2 - 1]$ pairwise disjoint MCQLS$(m)$s (or idempotent  MCQLS$(m)$s) contain some vector belonging to $S$.
Therefore, the remaining $t$ disjoint pairwise disjoint  MCQLS$(m)$s (or idempotent MCQLS$(m)$s) are pairwise $m$-disjoint from the $s$ MCIQLS$(m+1, 1)$s.\qed 
\end{proof}

\begin{lemma}\label{disjoint MCIDQLS$(2m;m)$}
    For any integer $n \geq 2$ and $m \geq 4$, there exist infinitely many pairwise disjoint MCIQLS$(nm, m)$. 
\end{lemma}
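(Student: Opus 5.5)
The plan is to build a single MCIQLS$(nm,m)$ in which every nonempty entry has nonzero sum of its first $(n-1)m$ components, and then invoke Lemma~\ref{many MCIDQLS$(m+h;h)$} with the parameters $(m+h,h)$ replaced by $((n-1)m+m, m)$. That lemma immediately yields infinitely many pairwise disjoint MCIQLS$(nm,m)$s.

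To build the initial array, I would use the singular direct product framework in the form described in Remark~\ref{sum--SDPC}, i.e.\ the version of Theorem~\ref{MCIDQLS-SDPC} without condition $(3)$ and without idempotence, with "$m$" there equal to $m$, "$n$" there equal to $n-1$, $u=1$ and $m_1=m$. The output is an MCIQLS$(m(n-1)+m,m)=$ MCIQLS$(nm,m)$. The required ingredients are as follows.
\begin{enumerate}
\item[(a)] An LS$(n-1)$ with one transversal. For $n-1\ge 1$, $n-1\neq 2$, this is provided by Theorem~\ref{DLS-existence}(1), since any idempotent square has its diagonal as a transversal. The case $n=3$ would be handled separately, using LS$(2)$ cleverly or a direct $2m\times$ block construction. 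For $n=2$, the "LS$(1)$" is trivial.
\item[(b)] $n-1$ pairwise disjoint MCQLS$(m)$s, supplied by Lemma~\ref{disjoint MCQLS$(v)$}.
\item[(c)] One MCIQLS$(2m,m)$ that is $m$-disjoint from them.
\end{enumerate}
This reduces the lemma to the base case $n=2$, i.e.\ to producing an MCIQLS$(2m,m)$ whose entries have nonzero sum of the first $m$ components. The general case then follows by the above recursion, or, more simply, by a direct block construction. In the direct construction, the $nm\times nm$ array consists of $(n-1)^2$ blocks $\ket{a_{i,j}}\otimes_+ B_i$, and the transversal cells of the LS$(n-1)$ are replaced by the $C_{11}$ part, with $C_{12},C_{13}$ bordering; the $m$-disjointness is secured by the pigeonhole selection of Lemma~\ref{disjoint MCQLS$(m)$+ MCIDQLS$(m+1;1)$}.

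For the base case MCIQLS$(2m,m)$, I would take two MCQLS$(m)$s $P,Q$ with all components nonzero, for instance the array of \cite{cite19} and a phase-rotated copy of it, disjoint from each other by Lemma~\ref{disjoint MCQLS$(v)$}. I would then form the $2m\times 2m$ array whose upper-left block has entries $\frac{1}{\sqrt2}(\ket{p_{k,l}},\ket{q_{k,l}})^T$-type mixtures, whose upper-right and lower-left blocks are $(\mathbf 0_m... )$, and whose lower-right block is empty. More concretely, the upper-left block uses $\frac1{\sqrt2}\binom{\ket{p}}{\ket{p'}}$ and $\frac1{\sqrt2}\binom{\ket{p}}{-\ket{p'}}$ arranged so that rows pair up orthogonally, while the off-diagonal blocks carry vectors supported on the first $m$ coordinates, taken from a further disjoint MCQLS$(m)$. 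Orthogonality along the first $m$ rows and columns then follows from the orthonormality of rows and columns of the ingredient squares together with the $\pm$ sign pattern. Maximum cardinality follows from the disjointness and from Lemma~\ref{state vector}-type component comparison.

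The main obstacle is this base case. The rows $m,\dots,2m-1$ must span exactly $\operatorname{span}\{\ket0,\dots,\ket{m-1}\}$ with only $m$ entries each, so the lower-left block must be a genuine QLS$(m)$ placed in the first $m$ coordinates. Consequently, the upper-left block's first-$m$ projections must be orthogonal to nothing extra, which forces the upper-left block to live mostly in the last $m$ coordinates. The correct layout is therefore: upper-left entries in $\operatorname{span}\{\ket m,\dots,\ket{2m-1}\}$ (a QLS$(m)$ $R$ shifted down), and upper-right and lower-left blocks both MCQLS$(m)$s $P,Q$ in the first $m$ coordinates. This is an MCIQLS$(2m,m)$ as soon as $P,Q,R$ are pairwise disjoint. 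However, the entries of $R$ have zero first-$m$ sum, so before applying Lemma~\ref{many MCIDQLS$(m+h;h)$} I would mix them: replace the pair of columns/rows by a unitary rotation $\binom{\cos\phi\,P}{\sin\phi\,R'}$-style combination that preserves all orthogonalities. Checking that such a rotation keeps the incomplete-square conditions is the step I expect to require the most care. If it fails, I would instead verify the disjointness argument of Lemma~\ref{many MCIDQLS$(m+h;h)$} directly for entries with zero first-$m$ part, since the $I_h$ block leaves those untouched and they are already disjoint across copies once $R$ is rotated by differing phases.
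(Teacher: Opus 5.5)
\textbf{There is a genuine gap: the steps after your base case fail.}

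Your base case is correct. With $\binom{\mathbf 0_m}{\ket{r_{k,l}}}$ in the upper-left block and $P$, $Q$ (supported on the first $m$ coordinates) in the upper-right and lower-left blocks, you get an MCIQLS$(2m,m)$; only $P$ and $Q$ need to be disjoint. This is essentially the paper's construction for $n=2$.

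The failure has a structural cause that you only half-noticed. In \emph{any} IQLS$(2m,m)$, the $m$ entries of a row $i<m$ lying in columns $m,\dots,2m-1$ are orthonormal (condition (2)). They also lie in $\operatorname{span}\{\ket{0},\dots,\ket{m-1}\}$ (condition (3) for those columns), so they span it. Hence every upper-left entry has first-$m$ part exactly $\mathbf 0_m$, not ``mostly''. This has three consequences.
(i) No rotation can give these entries a nonzero first-$m$ sum, so the hypothesis of Lemma~\ref{many MCIDQLS$(m+h;h)$} can never hold for $n=2$. Moreover $F^{\theta}_{m,h}$ acts as $I_h$ on the last $h$ coordinates, so these $m^2$ entries are identical in every $A_\theta$. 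Your fallback therefore yields arrays that all share them. Separating them requires acting nontrivially on the last $m$ coordinates, which is a different argument; a global phase on $R$ does nothing.
(ii) In your recursion for $n\ge 3$, each transversal cell receives $\ket{a}\otimes_{\widehat{1}}\ket{c}=\binom{\mathbf 0_{(n-1)m}}{\ket{c_{[m,2m-1]}}}$, which does not depend on $\ket{a}$. So all $n-1$ transversal blocks are the same $m\times m$ array, and the output is not of maximum cardinality. Those entries again have zero first-$(n-1)m$ sum, so Lemma~\ref{many MCIDQLS$(m+h;h)$} would not apply either. Appealing to Remark~\ref{sum--SDPC} does not help. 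The cardinality check for the singular direct product (Appendix A, Claim 2, Case 7) tacitly assumes $(s,t)\neq(s',t')$, and so misses exactly this coincidence when $m_p=m$.
(iii) The case $n=3$ is left open, and LS$(2)$ has no transversal to build on.

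The missing idea is that neither a transversal nor any nonzero-sum condition is needed. Take an LS$(n)$ with symbol $n-1$ in cell $(n-1,n-1)$, and delete that cell to get an ILS$(n,1)$ $A=(\ket{a_{i,j}})$. Replace each nonempty $\ket{a_{i,j}}$ by $\ket{a_{i,j}}\otimes B_i$, where $B_0,\dots,B_{n-1}$ are pairwise disjoint MCQLS$(m)$s. Since $a_{n-1,j}\neq n-1$ and $a_{i,n-1}\neq n-1$, the last block row and block column lie in $\operatorname{span}\{\ket{0},\dots,\ket{(n-1)m-1}\}$. Maximum cardinality follows from Lemma~\ref{state vector}, the Latin property of $A$, and the disjointness of the $B_i$. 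This works for every $n\ge 2$, including $n=3$.

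To get infinitely many pairwise disjoint copies, repeat with a fresh batch $B'_0,\dots,B'_{n-1}$ disjoint from all squares used so far; Lemma~\ref{disjoint MCQLS$(v)$} supplies infinitely many. Lemma~\ref{state vector} then makes the resulting arrays disjoint. This is the paper's proof. For $n=2$ it amounts to drawing a new triple $(P,Q,R)$ from one infinite pairwise disjoint family each time, which your base case was one step away from.
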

\begin{proof}
For $n \geq 2$, there exists an ILS$(n,1)$ whose last row or last column is a permutation of $\mathbb{Z}_n\setminus \{n-1\}$. Replacing each entry $i$ in this ILS$(n, 1)$ by $\ket{i}$ ($i \in \mathbb{Z}_n$), we obtain an IQLS$(n, 1)$, denoted by $A = (\ket{a_{i,j}})$. By lemma \ref{disjoint MCQLS$(v)$}, there exist infinitely many  pairwise disjoint MCIQLS$(m)$
Thus we can take $n$ pairwise disjoint MCQLS$(m)$ 
    $ B_t= (\ket{b_{t,k,l}})$, $t\in\mathbb{Z}_n$, and replace each nonempty entry $\ket{a_{i,j}}$ in $A$ by $\ket{a_{i,j}} \otimes B_i$, $i,j\in \mathbb{Z}_n$, and replace each empty entry by an empty $m \times m$ array. Similar to the proof of Theorem~\ref{MCDQLS-SDPC}, we obtain an MCIQLS$(nm, m)$, denoted by $N$.
    
Continue to select $n$ pairwise disjoint MCQLS$(m)$s $B'_t = (\ket{b'_{t,i,j}})$, $t \in \mathbb{Z}_n$, each of which is disjoint from $B_0, B_1, \dots, B_{n-1}$. Repeating the previous construction, replace each nonempty entry $\ket{a_{i,j}}$ in $A$ by  $\ket{a_{i,j}} \otimes B'_i$ for $i, j \in \mathbb{Z}_n$ to obtain an MCIQLS$(nm, m)$, denoted by $N'$. Since for any $i,j,i',j' \in \mathbb{Z}_n$, $k, l, k', l' \in \mathbb{Z}_m$, whenever $(i,j,k,l) \neq (i',j',k',l')$ we have $\ket{b_{i,k,l}}\neq  \ket{b'_{i',k',l'}}$, it follows that \[
    \ket{a_{i,j}} \otimes \ket{b_{i,k,l}}\neq \ket{a_{i',j'}} \otimes \ket{b'_{i',k',l'}},
    \] by Lemma \ref{state vector},  and thus $N$ and $N'$ are disjoint.
    
Continuing the above process, we can eventually obtain infinitely many desired pairwise disjoint\ MCIQLS$(nm, m)$s. \qed
\end{proof}
\begin{remark}
In the proof of Lemma~\ref{disjoint MCIDQLS$(2m;m)$}, it is clear that if each entry in $B_0, B_1, \dots, B_{n-1}; B'_0, B'_1, \dots, $ $B'_{n-1}; \dots$ has a nonzero sum of its components, then each entry of the resulting infinitely many disjoint MCIQLS$(nm, m)$s also has a nonzero sum of its components. Take $h=0$ in Lemma~\ref{many MCIDQLS$(m+h;h)$}, then from the proof of Lemma~\ref{disjoint MCQLS$(v)$}, we know that there exist infinitely many disjoint MCQLS$(m)$s with $m\geq 4$ and and  each entry has a nonzero sum of its components. Thus, there exist infinitely many pairwise disjoint MCIQLS$(nm, m)$s and  each entry has a nonzero sum of its components.

 \end{remark}
\begin{lemma}\label{disjoint MCQLS$(m)$+ MCIDQLS$(m+1;1)$+MCIDQLS$(2m;m)$}
Let $k$, $s$, $t$ be arbitrary integers, and and let $m$ be an integer with $m \ge 4$ (or $m \ge 7$ odd, or $m\ge 16$ an even square). There exist $k$ pairwise disjoint MCIQLS$(2m, m)$s, $s$ pairwise disjoint MCIQLS$(m+1, 1)$s,  $t$ pairwise disjoint MCQLS$(m)$s (or idempotent MCQLS$(m)$s), and moreover, every  MCQLS$(m)$s (or  idempotent MCQLS$(m)$s), MCIQLS$(m+1, 1)$s, and  MCIQLS$(2m, m)$s  are pairwise  $m$-disjoint.
\end{lemma}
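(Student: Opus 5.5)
We extend the pigeonhole argument of Lemma~\ref{disjoint MCQLS$(m)$+ MCIDQLS$(m+1;1)$} by one more layer, choosing the three families in a fixed order. Each later family is taken from an infinite supply and only finitely many members are discarded. The key observation is that a single vector can \emph{coincide} (up to a global phase) with at most one entry of a given array, since entries within one MCQLS or MCIQLS are pairwise distinct. Hence a finite set $S$ of "forbidden" vectors can meet at most $|S|$ members of a pairwise disjoint family.

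\textbf{Step 1.} By Lemma~\ref{disjoint MCIDQLS$(2m;m)$} (with $n=2$), fix $k$ pairwise disjoint MCIQLS$(2m,m)$s. Let $S_1$ be the set of vectors $\bigl(\ket{c_{[0,m-1]}};\ket{c_{[m,2m-1]}}\bigr)$ coming from all their nonempty entries, so $|S_1|\le k\,(4m^2-m^2)$. In the notation of $m$-distinctness, comparing with an MCIQLS$(m+1,1)$ means padding with $m_p=m$ and $m_q=1$ zeros. So each vector in $S_1$ becomes a vector of $\mathcal H_{2m+1}$ of the form $(x;y;0)$, and each entry of an MCIQLS$(m+1,1)$ becomes a vector $(x';\mathbf 0_m;z)$.

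\textbf{Step 2.} Lemma~\ref{disjoint MCIDQLS$(v+1;1)$} supplies infinitely many pairwise disjoint MCIQLS$(m+1,1)$s. Select $s+|S_1|$ of them. A padded vector $(x;y;0)$ can equal a padded $(x';\mathbf 0;z)$ only if the two projections coincide. In particular this happens only when the projected vectors agree, and for each vector of $S_1$ this can occur for at most one entry in at most one of the selected arrays. This is because equality of padded vectors up to phase forces the projection onto $\mathcal H_{2m+1}$ to be a fixed vector, and distinct entries of disjoint arrays are distinct even after padding, since padding is injective. Discarding the offending arrays leaves $s$ pairwise disjoint MCIQLS$(m+1,1)$s that are $m$-disjoint from the $k$ MCIQLS$(2m,m)$s.

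\textbf{Step 3.} Let $S_2$ be the set of all projections onto the first $m$ coordinates of entries of the chosen MCIQLS$(2m,m)$s and MCIQLS$(m+1,1)$s. Comparing with an MCQLS$(m)$ (which has $m_p=0$) means the padded vector of an entry $\ket b$ of the MCQLS$(m)$ is $(\ket b;\mathbf 0)$. The two vectors coincide only when the other vector's tail vanishes and its head equals $\ket b$. So $S_2$ is a finite set of at most $k\cdot 3m^2+s((m+1)^2-1)$ vectors. By Lemma~\ref{disjoint MCQLS$(v)$} (or Lemma~\ref{disjoint MCIDQLS$(v)$} in the odd $m\ge7$ and even-square $m\ge16$ cases), take $t+|S_2|$ pairwise disjoint (idempotent) MCQLS$(m)$s. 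By pigeonhole, at most $|S_2|$ of them contain a vector of $S_2$, and the remaining $t$ have the required $m$-disjointness. Pairwise disjointness inside each family is inherited from the source lemmas.

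\textbf{Main obstacle.} The delicate point is Step 2: making precise what $m$-disjointness means between two incomplete squares with different hole sizes ($m_p=m$, $m_q=1$), and checking that each forbidden vector excludes at most one array. I would handle this by writing both padded vectors in $\mathcal H_{2m+1}$ and noting that equality up to phase is an equivalence relation. Each equivalence class then meets a pairwise disjoint family in at most one array, which reduces Step 2 to the same counting as Step 3.
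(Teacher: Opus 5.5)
Your proposal is correct and follows the paper's proof. You fix the $k$ MCIQLS$(2m,m)$s, then extract first the $s$ MCIQLS$(m+1,1)$s and then the $t$ (idempotent) MCQLS$(m)$s from infinite pairwise disjoint families, discarding finitely many by pigeonhole. Your observation that a padded forbidden vector can coincide with an entry of at most one array of a disjoint family makes that pigeonhole step precise, and it gives the sharper discard count $|S_1|\le 3km^2$ in place of the paper's $3km^2[(m+1)^2-1]$.
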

\begin{proof} Since an ILS$(2,1)$ exists, by Lemma~\ref{disjoint MCQLS$(v)$}, there exist $k$ pairwise disjoint MCIQLS$(2m,m)$s.

Let $S_1$ denote the set of projection vectors obtained by projecting all elements contained in the $k$ MCIQLS$(2m, m)$s onto the first $m$ coordinates. Then $\lvert S_1 \rvert \leq 3km^2$.
By Lemma \ref{disjoint MCIDQLS$(v+1;1)$},  there are infinitely many pairwise disjoint  MCIQLS$(m+1,1)$s. Select $s +  3km^2[(m+1)^2-1]$  of them. 
By the pigeonhole principle, There are at most $3km^2[(m+1)^2-1]$ pairwise disjoint MCIQLS$(m+1,1)$s having a projection vector onto the first $m$ coordinates equal to some vector of $S_1$.
Therefore, the remaining $s$ disjoint pairwise disjoint  MCIQLS$(m+1,1)$s  are pairwise disjoint from the $k$ MCIQLS$(2m, m)$s.

Let $S_2$ denote the set of projection vectors obtained by projecting all elements contained in the $k$ MCIQLS$(2m, m)$ and the  $s$   MCIQLS$(m+1,1)$s onto the first $m$ coordinates. Then $\lvert S_2 \rvert \leq 3km^2+s[(m+1)^2-1]$. By Lemma \ref{disjoint MCQLS$(v)$} (or Lemma \ref{disjoint MCIDQLS$(v)$}),  there are infinitely many pairwise disjoint  MCQLS$(m)$s (or idempotent MCQLS$(m)$s).
 Select $t +  3km^2+s[(m+1)^2-1]$   MCQLS$(m)$s (or idempotent  MCQLS$(m)$s ).
By the pigeonhole principle, There are $t$ disjoint pairwise disjoint   disjoint pairwise disjoint  MCQLS$(m)$s (or idempotent MCQLS$(m)$s) $m$-disjoint from the previous $s$ MCIQLS$(m+h, h)$s and the previous $k$ MCIQLS$(2m, m)$s. \qed
\end{proof}

\begin{remark} \label{sum}
Similar to the previous analysis, we can guarantee  that each entry of the the $k$  MCIQLS$(2m, m)$s, $s$  MCIQLS$(m+1, 1)$s and  $t$ MCQLS$(m)$s (or idempotent MCQLS$(m)$s) in Lemma \ref{disjoint MCQLS$(m)$+ MCIDQLS$(m+1;1)$+MCIDQLS$(2m;m)$} has a nonzero sum of its components.
\end{remark}

We introduce the definition of difference matrices and use them to construct some more Latin squares with specific transversal conditions.

Let $(G, \odot)$ be a group of order $v$. A $(v, k; \lambda)$-difference matrix (or $(v, k; \lambda)$-DM) is a $k \times v\lambda$ matrix $D = (d_{i,j})$ with entries from $G$, such that for each $1 \leq t < h \leq k$, the multiset
\[
\{d_{t, \ell} \odot d_{h, \ell}^{-1} : 1 \leq \ell \leq v\lambda\}
\]
contains every element of $G$ exactly $\lambda$ times.

\begin{lemma}\cite{cite21} \label{lem3.2.3}
   There exists a $(v, 4; 1)$-$\mathrm{DM}$ if and only if $v \geq 4$ and $v \not\equiv 2 \pmod{4}$. 
\end{lemma}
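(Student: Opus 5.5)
The statement is a classical result quoted from \cite{cite21}. If I were to reprove it, I would split it into necessity and sufficiency and work throughout with the standard facts about difference matrices. The fact I would rely on is this: deleting rows of a $(v,k;1)$-DM leaves a $(v,k';1)$-DM for every $k'\le k$. Also, a $(v,3;1)$-DM over $G$ can be normalised so that its first row is constant. Once normalised, it is equivalent to a complete mapping (an orthomorphism) of $G$.

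\textbf{Necessity.} For $v\equiv 2\pmod 4$, I would use the Hall--Paige theorem: a group $G$ admits a complete mapping if and only if its Sylow $2$-subgroups are trivial or non-cyclic. When $v\equiv 2 \pmod 4$, the Sylow $2$-subgroup of any group of order $v$ is $\mathbb{Z}_2$, which is cyclic. So no $(v,3;1)$-DM exists, and therefore no $(v,4;1)$-DM. (For $G=\mathbb{Z}_v$ this can be proved directly by the parity argument: summing $\mu(t)-t$ over $t$ gives $0\equiv v(v-1)/2 \pmod v$, which fails for even $v$.) For $v=3$, I would note that a $(v,4;1)$-DM yields a $\mathrm{TD}(5,v)$, that is, $3$ MOLS of order $v$. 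This is impossible for $v=3$, since at most $v-1=2$ MOLS of order $3$ exist. The cases $v=1,2$ are trivial.

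\textbf{Sufficiency.} First, for a prime power $q\ge 4$, I would work in $\mathbb{F}_q$ with four pairwise distinct elements $0,1,a,b$. The rows $(0)_{x}$, $(x)_{x}$, $(ax)_{x}$, $(bx)_{x}$, indexed by $x\in\mathbb{F}_q$, form a $(q,4;1)$-DM. This is because the difference of any two rows is $cx$ with $c\ne 0$, and such a map is a bijection. Second, I would use the direct product: if $(v_1,4;1)$- and $(v_2,4;1)$-DMs exist over $G_1$ and $G_2$, then the entrywise pairing gives a $(v_1v_2,4;1)$-DM over $G_1\times G_2$. Factoring $v$ into prime powers, these two steps handle every admissible $v$ whose $3$-part is not exactly $3$ and whose $2$-part is not exactly $2$. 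The second restriction is automatic when $v\not\equiv 2\pmod 4$.

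\textbf{Main obstacle.} What remains is $v=3n$ with $\gcd(n,3)=1$ and $n\not\equiv 2\pmod 4$, for example $12, 15, 21, 24, 33, 39,\dots$. Here the factor $3$ cannot be split off, so a genuinely different construction is needed.
\begin{itemize}
\item For $v=3p$ with $p$ a prime power, I would first try a direct construction over $\mathbb{Z}_3\times\mathbb{F}_p$. I would take rows of the form $(i,\, c_r x)$ twisted by suitable cyclotomic cosets, and use a character-sum estimate in the style of Lemma~\ref{Chang-Ji} to guarantee the required representatives for large $p$.
\item Small exceptions would be settled by computer search.
\item For the remaining composite $n$, I would combine these with the product construction, as $3p\cdot(\text{prime powers coprime to }3)$.
\item For $v=3\cdot 2^a$, I would supplement with direct or computer constructions for $12$ and $24$ before taking products.
\end{itemize}
I expect the bulk of the work to lie in this $3n$ family. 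That is exactly where the cited literature relies on the ad hoc and cyclotomic constructions.
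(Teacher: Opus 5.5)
The paper gives no proof of this lemma. It is quoted from the literature and used as a black box in Lemma~\ref{DLS}.

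\textbf{What is sound.} Your necessity half is correct. Normalising a $(v,3;1)$-DM yields a complete mapping of $G$, so the classical (easy) direction of Hall--Paige rules out every group of order $v\equiv 2\pmod 4$. The passage DM $\to$ resolvable $\mathrm{TD}(4,v)\to\mathrm{TD}(5,v)$, i.e.\ three MOLS, rules out $v=3$.

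Your reduction is also correct. The rows $0,x,ax,bx$ over $\mathbb{F}_q$, together with the direct product, settle every $v\ge 4$, $v\not\equiv 2\pmod 4$ with $3\nmid v$ or $9\mid v$. Since $3p^a=3p\cdot p^{a-1}$, and $3\cdot 2^a=12\cdot 2^{a-2}$ for $a\ge 4$, what remains is exactly the $(3p,4;1)$-DMs for primes $p\ge 5$, plus $(12,4;1)$- and $(24,4;1)$-DMs.

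One small correction: $v=1$ is not ``trivially'' excluded. The $4\times 1$ column of identities is a $(1,4;1)$-DM, so the statement tacitly assumes $v\ge 2$.

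\textbf{The gap.} This residual family carries the whole difficulty of the theorem, and you never construct it. ``Rows $(i,c_rx)$ twisted by suitable cyclotomic cosets'' does not specify a matrix, so there is nothing to check.

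The twist is not cosmetic. Suppose the entry in row $r$, column $(i,x)$ had the form $(\alpha_r(i),\beta_r(x))$. Then the $\alpha_r$ would themselves form a $(3,4;1)$-DM, which you have just shown does not exist. So the $\mathbb{Z}_3$-coordinate must depend on $x$, for example on its cyclotomic class.

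Until that coupling is written down, a chain of unknowns follows:
\begin{itemize}
\item you cannot say which simultaneous coset conditions Lemma~\ref{Chang-Ji} has to deliver;
\item hence you cannot extract a bound on $p$;
\item hence you do not know which finite set of ``small exceptions'' your computer search must cover.
\end{itemize}
Similarly, $12$ and $24$ need explicit matrices over groups with non-cyclic Sylow $2$-subgroups.

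As it stands, your argument establishes necessity and the cases $3\nmid v$ or $9\mid v$ only. For $3\,\|\,v$ you need either these explicit constructions or, as the paper does, an appeal to the published theorem.
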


\begin{lemma}\label{DLS}
For any positive integer  $n$ with $4\mid n$, there exists a $\mathrm{DLS}(n)$  that has $n$ pairwise disjoint transversals, including the main diagonal and the anti-diagonal.
\end{lemma}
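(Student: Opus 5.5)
The plan is to build the square as a "difference" square from a normalized difference matrix, and then order its rows and columns so that the main diagonal and the anti-diagonal are two members of a natural family of $n$ disjoint transversals.

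\textbf{Step 1 (normalized difference matrix).} Since $4\mid n$, we have $n\ge 4$ and $n\not\equiv 2\pmod 4$. By Lemma~\ref{lem3.2.3} there is an $(n,4;1)$-DM over some abelian group $(G,+)$ of order $n$; we take $G=\mathbb{Z}_n$ or whichever group the lemma provides. Subtracting the first row from every row preserves the difference property, so we may assume the first row is identically $0$. Index the columns of the DM by $G$. Then rows $2$ and $3$ define maps $\varphi_2,\varphi_3\colon G\to G$ with the following properties:
\begin{itemize}
\item $\varphi_2$ and $\varphi_3$ are permutations of $G$, because their differences with the zero row cover $G$;
\item $\varphi_3-\varphi_2$ is a permutation of $G$.
\end{itemize}
Only three rows of the DM are actually needed.

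\textbf{Step 2 (the square and its transversals).} Rows and columns are indexed by $G$, and the cell $(x,y)$ receives the symbol
\[
L(x,y)=\varphi_3(x)-\varphi_2(y).
\]
Each row is a Latin line because $\varphi_2$ is a bijection, and each column is a Latin line because $\varphi_3$ is a bijection. For each $c\in G$ let
\[
T_c=\{(x,y)\colon \varphi_2(y)=\varphi_2(x)+c\}.
\]
The set $T_c$ meets every row once, since $y$ is determined by $x$. It meets every column once, since $x=\varphi_2^{-1}(\varphi_2(y)-c)$. Its symbols are
\[
\varphi_3(x)-\varphi_2(x)-c,
\]
which are all distinct because $\varphi_3-\varphi_2$ is a permutation. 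Hence each $T_c$ is a transversal. The $T_c$ are pairwise disjoint because each cell $(x,y)$ lies in exactly one of them, namely $c=\varphi_2(y)-\varphi_2(x)$. This gives $n$ pairwise disjoint transversals, and $T_0$ is the set of cells $(x,x)$.

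\textbf{Step 3 (ordering rows and columns).} Use the same linear order $x_0,\dots,x_{n-1}$ of $G$ for both rows and columns, so that $T_0$ becomes the main diagonal. We want the anti-diagonal cells $(x_i,x_{n-1-i})$ to form some $T_c$. Since $n$ is even, $G$ has an element $c$ of order $2$. For this $c$ the map
\[
\tau(x)=\varphi_2^{-1}(\varphi_2(x)+c)
\]
is an involution of $G$ without fixed points, so $G$ splits into $n/2$ pairs $\{x,\tau(x)\}$. Choose one element from each pair and list these as $x_0,\dots,x_{n/2-1}$, then set $x_{n-1-i}=\tau(x_i)$. With this order the anti-diagonal is exactly $T_c$.

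Finally, relabel the symbols of $G$ bijectively by $\{0,\dots,n-1\}$. The result is a $\mathrm{DLS}(n)$ with $n$ pairwise disjoint transversals, two of which are the main diagonal and the anti-diagonal.

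The only real obstacle is Step 3: making the anti-diagonal coincide with one of the transversals $T_c$. The choice of an involutive shift $c$ resolves it, and this uses only that $n$ is even. The hypothesis $4\mid n$ enters solely through the existence of the difference matrix in Lemma~\ref{lem3.2.3}.
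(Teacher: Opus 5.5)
Your proof is correct and is essentially the paper's construction: your square $L(x,y)=\varphi_3(x)-\varphi_2(y)$ is the transpose of the paper's $L^2(i,j)=g_i^{-1}\odot d_{3,j}$. After that same transposition, your transversals $T_c$ are the symbol classes of the paper's orthogonal mate $L^1(i,j)=g_i^{-1}\odot g_j$ (you check the transversal property directly rather than through orthogonality), and your fixed-point-free involution $\tau$ is the paper's ordering $g_{n+1-i}=g_i\odot\alpha$ of the second DM row. One small correction: $G=\mathbb{Z}_n$ is never available here, because for even $n$ the cyclic group has no complete mapping and hence no $(n,3;1)$-DM, so $G$ must be a noncyclic abelian group --- your argument handles that case without change.
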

\begin{proof}
By Lemma \ref{lem3.2.3}, there exists a $(n, 3; 1)$-DM if $n \equiv 0 \pmod{4}$.  Without loss of generality, assume that $D=(d_{ij})$ is an $(n,3,1)$-DM over an Abelian group $(G,\odot)$, and every entry of its first row is the identity element. Further assume that the entries of its second row are $g_1=\theta, g_2, \dots, g_{\frac{n}{2}}, g_{\frac{n}{2}+1}, \dots, g_{n-1}, g_n$, with $g_{n+1-i}=g_i\odot \alpha$, where $\alpha$ is an element of order $2$ in $G$. By the proof of Theorem 2.1 of Ref.~\cite{cite}, we can construct two orthogonal Latin squares $L^1$  and $L^2$, where $L^{l}(i,j)=g_i^{-1}\odot d_{l+1,j}$, $l=1,2$. Note that every entry  on the main diagonal of $L^{1}$ is $\theta$, and every entry  on the anti-diagonal of $L^{1}$ is $\alpha$, so $L^{2}$ is a DLS$(n)$. 
Observe the positions in $L^{2}$ whose entries are all $g_i^{-1}\odot d_{2j}=g_i^{-1}\odot g_{j}=L^{1}(i,j)$.
If $L^{1}(i',j') = g_{i'}^{-1} \odot g_{j'} = g_i^{-1} \odot g_j$ with $(i,j) \neq (i',j')$, then we must have $i \neq i'$ and $j \neq j'$. Otherwise, assume without loss of generality that $i = i'$;
then $g_i^{-1} = g_{i'}^{-1}$, and hence $g_j = g_{j'}$, 
which implies $j = j'$.
Thus, the element $g_i^{-1} \odot g_j$ in $L^{1}$ appears in different rows and columns of $L^{1}$, which corresponds to the fact that $L^{2}$ produces a transversal in those corresponding positions. Finally, $L^{2}$ is a $\mathrm{DLS}(n)$  that has $n$ pairwise disjoint transversals, including the main diagonal and the anti-diagonal.\qed

\end{proof}

\begin{lemma}  \label{DLs-6,10,14}
    If $n\in \{6,10,14\}$, there exists a $\mathrm{DLS}(n)$ that has $n-2$ pairwise disjoint transversals, including the main diagonal and the anti-diagonal.
\end{lemma}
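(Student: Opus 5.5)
For $n\in\{6,10,14\}$ there is no $(n,4;1)$-DM by Lemma~\ref{lem3.2.3}, so the construction of Lemma~\ref{DLS} cannot be used. Moreover, by Lemma~\ref{lem2.2.5} an $\mathrm{LS}(6)$ never has $6$ disjoint transversals, so $n-2$ is the natural target. I would therefore argue by explicit construction, separately for each of the three orders.

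The plan is to exhibit, for each $n\in\{6,10,14\}$, a concrete $n\times n$ array $L$ together with a partition of some of its cells into $n-2$ sets $T_1=\{(i,i)\}$, $T_2=\{(i,n-1-i)\}$, $T_3,\dots,T_{n-2}$. Since $n$ is even, the main diagonal and the anti-diagonal are disjoint.

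To find the arrays I would use the standard equivalence: a Latin square with $k$ disjoint transversals is the same as an $n\times n$ array $L$ together with a partial array $P$ in which each symbol of $\{0,\dots,k-1\}$ occupies a transversal of $L$. So I would look for $L$ with a cyclic-like structure, for instance a ``bordered'' or two-step type square $L(i,j)=ai+bj \pmod n$ with the diagonals patched, or a square built from a near-orthomorphism of $\mathbb{Z}_{n-1}$ plus one extra row and column (the usual $\mathbb{Z}_{n-1}\cup\{\infty\}$ construction). In the latter approach, the pan-diagonal-type cell sets $\{(i,i+s)\}$ that avoid the border naturally give many disjoint transversals. The work is then to re-index rows and columns so that two of these transversals sit on the main diagonal and the anti-diagonal.

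For $n=6$ I would simply find an example by hand or by computer: a $\mathrm{DLS}(6)$ with $4$ disjoint transversals, two of them the diagonals, and list it explicitly with the transversals marked. For $n=10$ and $14$ I would also present explicit squares, found by a computer search. The search would fix the two diagonals as transversals and then greedily or backtrackingly add transversals disjoint from the ones already chosen. Verification for each case is a finite check: every row and column is a permutation, the two diagonals contain all symbols, and each $T_p$ has distinct rows, columns and symbols, with the $T_p$ pairwise disjoint.

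The main obstacle is $n=6$. Here order-$6$ Latin squares are very constrained: there is no orthogonal mate, and transversal counts are small. It is not obvious a priori that a diagonal $\mathrm{LS}(6)$ admits $4$ disjoint transversals including both diagonals. I would handle it by an exhaustive search over diagonal Latin squares of order $6$, and then record one found square explicitly in the paper. The cases $10$ and $14$ should be comparatively easy, since large transversal sets are abundant there.
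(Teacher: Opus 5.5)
Your strategy is the paper's own: Lemma~\ref{DLs-6,10,14} is proved by exhibiting explicit squares, and everything after that is a finite check. Appendix~B gives a $\mathrm{DLS}(6)$, a $\mathrm{DLS}(10)$ and a $\mathrm{DLS}(14)$, the last taken from the literature, each with its $n-4$ further transversals colour-coded. The gap is that your proposal stops exactly where the content of the lemma lies. For a finite existence statement the exhibited squares \emph{are} the proof, and you produce none of them. What you give instead is a search plan, and for $n=6$ you say yourself that its success is not obvious.

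That case does go through. The paper's square has rows $(0,1,2,3,4,5)$, $(1,2,0,5,3,4)$, $(4,3,5,0,2,1)$, $(3,0,1,4,5,2)$, $(5,4,3,2,1,0)$, $(2,5,4,1,0,3)$. Besides the two diagonals it has the disjoint transversals $\{(r,\pi(r))\}$ with $(\pi(0),\dots,\pi(5))$ equal to $(1,3,4,0,5,2)$ and to $(4,2,1,5,0,3)$. Until arrays like this are written down and checked, the lemma is not proved.

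Several of the heuristics you suggest for locating the squares would also fail as described.
\begin{itemize}
\item Two disjoint transversals $\{(r,\pi_1(r))\}$ and $\{(r,\pi_2(r))\}$ can be moved onto the main diagonal and the anti-diagonal by row and column permutations if and only if $\pi_2\pi_1^{-1}$ is conjugate to $i\mapsto n-1-i$, i.e.\ is a fixed-point-free involution. Pan-diagonal-type transversals of a cyclic-type square have relative permutation essentially a nonzero translation of the odd group $\mathbb{Z}_{n-1}$, which is never an involution. So ``re-indexing'' them onto the two diagonals is impossible.
\item In the $\mathbb{Z}_{n-1}\cup\{\infty\}$ construction, every transversal must use a cell of the border row and a cell of the border column. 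Hence the sets $\{(i,i+s)\}$ inside the $\mathbb{Z}_{n-1}$ block are not transversals. Completing each of them by one cell forces that cell to be the corner, so the completions pairwise intersect.
\item The square $L(i,j)=ai+bj \pmod n$ is isotopic to the Cayley table of $\mathbb{Z}_n$, which has no transversal at all for even $n$.
\end{itemize}

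Finally, two misattributions. Lemma~\ref{lem2.2.5} says nothing about $n=6$. The fact that an $\mathrm{LS}(6)$ cannot have $6$ disjoint transversals follows instead from the non-existence of a pair of orthogonal Latin squares of order $6$. And what rules out the method of Lemma~\ref{DLS} for $n\equiv 2\pmod 4$ is the non-existence of an $(n,3;1)$-DM (there are no complete mappings, by Hall--Paige), not merely of an $(n,4;1)$-DM.
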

\begin{proof}
The desired  $\mathrm{DLS}(n)$s are given in Appendix B. \qed

\end{proof}

  \section{Existence results}
  
  In this section, we aim to determine the existence of an \(\mathrm{MCDQLS}(v)\). To this end, we first establish the existence of an idempotent \(\mathrm{MCQLS}(v)\).

\begin{theorem} \label{idempotent MCQLS$(v)$ -result-odd}
For any   integer \( v \geq 6 \), there   exists an idempotent MCQLS$(v)$ except possibly when \( v \in  
\{6,8,10,12,14,18,20,24,26,30,32, 38,62\} \).
\end{theorem}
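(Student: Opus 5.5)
Odd orders are immediate: for every odd $v\ge 7$, Lemma~\ref{MCIDQLS-odd} gives an idempotent MCQLS$(v)$. We may therefore assume $v$ is even with $v\ge 16$ and $v$ outside the exceptional list. The plan is to write $v=mn+w$ and apply the singular direct product construction of Theorem~\ref{MCIDQLS-SDPC}, using the ingredient results of Section~3.

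\textbf{Ingredients.} For the outer Latin square we use Lemma~\ref{lem2.2.5}: for every $n\ne 2,6$ there is an LS$(n)$ with $n$ pairwise disjoint transversals, one of them the main diagonal. Any $u+1\le n$ of these transversals can be used, so $0\le u\le n-1$.

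The inner blocks come from Lemma~\ref{disjoint MCQLS$(m)$+ MCIDQLS$(m+1;1)$+MCIDQLS$(2m;m)$} together with Remark~\ref{sum}. Taking $m$ odd with $m\ge 7$, this lemma supplies:
\begin{enumerate}
\item[(a)] arbitrarily many pairwise disjoint MCQLS$(m)$s, including idempotent ones;
\item[(b)] pairwise disjoint MCIQLS$(m+1,1)$s;
\item[(c)] pairwise disjoint MCIQLS$(2m,m)$s;
\end{enumerate}
all mutually $m$-disjoint and with nonzero component sums.

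We then choose $m_p\in\{1,m\}$, so that $w=u_1+u_2m$ with $u_1+u_2=u$. The type-disjointness needed in condition~(3) follows from Lemma~\ref{type disjoint}. Take case~(1) of that lemma when every $m_p=1$. Otherwise take case~(3), provided the filler idempotent MCQLS$(w)$ can be chosen from Lemma~\ref{MCIDQLS-odd} or the Fourier construction of Theorem~\ref{Construction--complete mapping} with all components nonzero. That construction has all entries of modulus $1/\sqrt{v}$, so all components are indeed nonzero.

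\textbf{Main family.} Take $m=7$ (or $9$, $11$, and so on) and $n\ge 4$ with $n\ne 6$, so $mn$ ranges over multiples of $m$. Choose $w$ odd with $7\le w\le n-1$ when only $m_p=1$ is used, so that an idempotent MCQLS$(w)$ exists by Lemma~\ref{MCIDQLS-odd}. Then $v=mn+w$ is even exactly when $mn$ is odd, i.e.\ $n$ odd. This yields all even $v$ in the intervals $[7n+7,\,8n-1]$ for odd $n\ge 9$.

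These intervals, together with those for $m=9,11,13$ and the options $m_p=m$ (which make $w$ larger), should cover all sufficiently large even $v$. The option $w=0$ also helps, via Remark~\ref{sum--SDPC} or a direct product analogous to Lemma~\ref{MCDQLS-SDPC2} with an idempotent LS$(n)$ and $n$ disjoint idempotent MCQLS$(m)$s. Even squares $v\ge 16$ are covered separately by Lemma~\ref{MCIDQLS(v^2)}.

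\textbf{Main obstacle.} The hard part is the small even orders, roughly $16\le v\le 200$. Here one must hand-tabulate a decomposition $v=mn+w$ for each value, subject to the following requirements:
\begin{enumerate}
\item[(i)] an idempotent MCQLS$(w)$ exists (from odd $w\ge 7$, from an even $w$ settled earlier, or $w=0$);
\item[(ii)] $u\le n-1$ and $n\ne 2,6$;
\item[(iii)] the $m$-disjointness hypotheses hold.
\end{enumerate}
The first attempt for each $v$ would be $m=7$ with the largest admissible odd $n$, falling back to $m\in\{8,9,16\}$ and to $w$ even (using previously established even orders recursively) when needed.

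The orders left unresolved by this search are exactly the listed exceptions $\{6,8,\dots,62\}$, since for these no admissible decomposition exists (for example, $w$ would have to be $1,3,5$ or a small even number). Checking that every other even $v\ge 16$ admits a decomposition is the main bookkeeping step.
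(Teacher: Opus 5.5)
Your overall plan matches the paper's, and so does the large range: the $m=7$ family gives every even $v\in[7n+7,8n-2]$ for odd $n$, and these intervals overlap once $n\ge 21$, which settles all even $v\ge 154$. The genuine gap is your admissibility rule~(i). You allow only $w=0$, odd $w\ge 7$, or an already settled even $w$, and you cite $w=1$ as a reason a decomposition fails. In fact $w=1$ is admissible and essential. The $1\times 1$ array is an idempotent MCQLS$(1)$. With $u=1$, $m_1=1$ it is automatically type-disjoint from the MCIQLS$(m+1,1)$s of Section~3, because every nonempty entry there has nonzero projection onto the first $m$ coordinates and so cannot equal $\ket{m}$ up to phase. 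The paper's table rests on exactly this: $22=3\cdot 7+1$, $34=3\cdot 11+1$, $40=3\cdot 13+1$, $46=3\cdot 15+1$, and also $50,58,78,82,94$.

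Without $w=1$ your search cannot close. For $v\in\{22,34,40,46\}$, take any $(m,n)$ with $m$ odd $\ge 7$ or $m=16$, $n\ge 3$, $mn<v$. When $n=3$, at most two parts from $\{1,m\}$ reach only $w\in\{0,1,2,m,m+1,2m\}$. In every case the leftover $w$ is one of three things: $1$; an order with no idempotent MCQLS available ($2,4,5,6,10,12,18$); or not a sum of at most $n-1$ parts from $\{1,m\}$. Also, none of the four orders is $mn$ with admissible $m$ and $n$. So under your rules these four orders stay open, while the statement asserts existence for all of them. Your closing claim that the unresolved orders are exactly the listed exceptions is therefore false. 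The paper's other $w=1$ orders do have alternatives in your scheme, e.g.\ $50=5\cdot 7+(7+7+1)$, $58=7\cdot 7+(7+1+1)$, $78=9\cdot 7+(7+7+1)$, $82=7\cdot 9+(9+9+1)$, $94=9\cdot 9+(9+1+1+1+1)$.

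Two smaller corrections:
\begin{itemize}
\item $m=8$ is not usable as an inner order. Theorem~\ref{MCIDQLS-SDPC} needs an idempotent MCQLS$(8)$, and $8$ is itself on the exceptional list; the paper's even inner order is $m=16$.
\item When $u=0$ you may take $n=6$, since only an idempotent LS$(6)$ is required (Theorem~\ref{DLS-existence}). The paper uses this for $42=6\cdot 7$ and $54=6\cdot 9$.
\end{itemize}
Finally, the table for $16\le v\le 152$ is the real content of the proof, and you have not actually produced it.
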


\begin{proof}
By Lemma~\ref{MCIDQLS-odd}, it suffices to consider the even orders greater than or equal to $6$.

For any even positive integer \( v \geq 154 \), take  $m =7$ and suppose that $n \geq 9$ be odd. By Lemma~\ref{lem2.2.5}, there exists an idempotent LS$(n)$ containing $n$ disjoint transversals, including the main diagonal. By  Lemma \ref{disjoint MCQLS$(m)$+ MCIDQLS$(m+1;1)$},  there exist $u$ ($7 \leq u \leq n-2$) pairwise disjoint MCIQLS$(m+1, 1)$s,  $(n+2)$ pairwise disjoint  idempotent MCQLS$(m)$s, and  these   MCQLS$(m)$s are also $m$-disjoint from the $s$  MCIQLS$(m+1, 1)$s. Then applying Theorem \ref{MCIDQLS-SDPC} and Lemma \ref{type disjoint}, an  idempotent MCQLS$(mn+u)$  exists.
Furthermore, it follows that for any odd integer $n \geq 9$ and any integer $v \in [7n+7, 7n+n-2]_e = [7n+7, 8n-2]_e$, an idempotent MCQLS$(v)$ exists. Since $n+2$ is also odd, for any odd integer $n \geq 9$ and any integer $v \in [7(n+2)+7, 8(n+2)-2]_e$, an idempotent MCQLS$(v)$ also exists. Computing

\[
7(n+2)+7-2 \leq 8n-2
\]
yields $n \geq 21$. Hence for $v \geq 7 \times 21 + 7 = 154$, an idempotent MCQLS$(v)$ exists for all even orders $v$.

 Similarly, for any odd numbers $m \geq 7$, $n \geq 3$, $u = 1$ or $m \geq 7$, $n \geq 9$, $7 \leq u \leq n-2$, there exists an idempotent MCQLS$(mn+u)$ of  even order  by Theorem \ref{MCIDQLS-SDPC},  Lemma \ref{type disjoint}, \ref{disjoint MCQLS$(m)$+ MCIDQLS$(m+1;1)$} and \ref{lem2.2.5}.  
The Table below shows the orders for which an idempotent MCQLS$(mn+u)$ exists under the above processing method.

$$
\begin{tabular}{ccc} \label{idempotent MCQLS-Tabel1}
 $v$ & $n \times m + u$ & $u$  \\
 \hline  
           $[140, 150]_e$ & $19 \times 7 + s$ & $s \in [7,17]_o$   \\
           $[126, 134]_e$ & $17 \times 7 + s$ & $s \in [7,15]_o $  \\
           $[112, 118]_e$ & $15 \times 7 + s$ & $s \in [7,13]_o$  \\
           $\{98, 100, 102\}$ & $13 \times 7 + s$ & $s = 7,9,11$  \\
            $106$ & $9 \times 11 + 7$ & $ 7$  \\
 $94$ & $3 \times 31 + 1$ & $1$  \\
           $86$ & $7 \times 11 + 9$ & $ 9$  \\
           $82$ & $9 \times 9 + 1$ & $1$  \\
 $78$ & $11 \times 7 + 1$ & $ 1$ \\
                     $58$ & $3 \times 19 + 1$ & $1$  \\
           $50$ & $7 \times 7 + 1$ & $1$  \\
           $46$ & $3 \times 15 + 1$ & $1$  \\
           $40$ & $3 \times 13 + 1$ & $1$  \\
           $34$ & $3 \times 11 + 1$ & $1$  \\
           $22$ & $3 \times 7 + 1$ & $1$ \\
           \hline
    \end{tabular}
$$

For any integer $n \geq 3$, and for $m$ such that either $m \geq 7$ is odd or $m \geq 16$ is an even perfect square, there exists an idempotent MCQLS$(mn)$  by Theorem \ref{MCIDQLS-SDPC},  Lemma \ref{disjoint MCIDQLS$(v)$} and Lemma \ref{lem2.2.5}. 
Below, we list the orders for which an  idempotent MCQLS$(mn)$ exists under the above processing method.
$$\begin{tabular}{cccc}
        $152 = 8 \times 19$ & $104 = 8 \times 13$ & $72 = 8 \times 9$ & $52 = 4 \times 13$ \\
        $138 = 6 \times 23$ & $96 = 6 \times 16$ & $70 = 10 \times 7$ & $48 = 3 \times 16$ \\
        $136 = 8 \times 17$ & $92 = 4 \times 23$ & $68 = 4 \times 17$ & $44 = 4 \times 11$ \\
        $124 = 4 \times 31$ & $90 = 10 \times 9$ & $66 = 6 \times 11$ & $42 = 6 \times 7$ \\
        $122 = 6 \times 21$ & $88 = 8 \times 11$ & $64 = 8^2$ & $36 = 6^2$ \\
        $120 = 8 \times 15$ & $84 = 4 \times 21$ & $60 = 4 \times 15$ & $28 = 4 \times 7$ \\
        $110 = 10 \times 11$ & $80 = 5 \times 16$ & $56 = 8 \times 7$ & $16 = 4^2$ \\
        $108 = 4 \times 27$ & $76 = 4 \times 19$ & $54 = 6 \times 9$ & \\
    \end{tabular}
$$
 Besides, for $74 = 7 \times 9 + (9 + 2)$, take $n = 7$, $m = 9$, $m_1 = 9$, $m_2 = m_3 = 1$. There exists an idempotent MCQLS$(74)$  by Theorem \ref{MCIDQLS-SDPC},  Lemma \ref{type disjoint}, \ref{disjoint MCQLS$(m)$+ MCIDQLS$(m+1;1)$+MCIDQLS$(2m;m)$} and \ref{lem2.2.5}. 
 
   The proof is then completed. \qed

\end{proof}

\begin{remark}\label{MCIDQLS-R1}
From Remark \ref{sum--SDPC} and Remark \ref{sum}, we know that the idempotent MCQLS obtained in the proof of Theorem \ref{idempotent MCQLS$(v)$ -result-odd} can also satisfy that each entry has a nonzero sum of its components. Thus, by Lemmas \ref{many MCIDQLS$(m+h;h)$} and \ref{disjoint MCIDQLS$(v)$}, for \( v \geq 6 \), we can also obtain infinitely many disjoint idempotent MCQLS\((v)\) except possibly when \( v \in \{6,8,10,12,14,18,20,24,26,30,32,38,62\}\).
\end{remark}

\begin{theorem}\label{MCDQLS-result-odd}
For any odd  integer \( v \geq 7 \), there   exists an MCDQLS$(v)$ except possibly when \( v \in \{7, 9, 11, 15, 21, 27, 33, 39, 51, 63, 75\} \).
\end{theorem}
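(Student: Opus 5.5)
We split the odd orders $v\ge 7$ according to whether $3\mid v$. If $3\nmid v$, the direct constructions already suffice. For $v$ prime with $v\ge 13$ we apply Lemma~\ref{MCDQLS-odd prime}. For $v$ composite, $3\nmid v$ forces $v\ge 25$, and Lemma~\ref{MCDQLS-odd} applies. This leaves only $7$ and $11$ in this class, and both are on the exceptional list.

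Now suppose $3\mid v$, so $v=3^a r$ with $a\ge1$ and $\gcd(r,3)=1$. Here we use the product construction of Lemma~\ref{MCDQLS-SDPC2}, which needs a DLS$(n)$ and $n$ pairwise disjoint MCDQLS$(m)$s. We want to write $v=nm$ with $m$ odd, $m\ge 13$ and $3\nmid m$, so that Lemma~\ref{disjoint MCIDQLS$(v)$} supplies infinitely many pairwise disjoint MCDQLS$(m)$s. We also need $n\ne 2,3$, so that a DLS$(n)$ exists by Theorem~\ref{DLS-existence}(2).

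The factorisation is chosen as follows.
\begin{itemize}
\item If $r\ge13$, take $m=r$ and $n=3^a$. This requires $a\ge2$, since $n=3$ is not allowed.
\item If $a=1$, so that $v=3r$, try instead $m=r/p$ and $n=3p$, where $p$ is a prime factor of $r$ with $r/p\ge13$. Here $n\ge 15$ is odd, so a DLS$(n)$ exists.
\item If $r$ is small ($r\in\{1,5,7,11\}$) but $a$ is large, use a recursive step. Once MCDQLS$(m')$ is known for some $m'=3^b r$, Lemma~\ref{MCDQLS-SDPC2} can be applied with $n=3^{a-b}$ and $m=m'$.
\end{itemize}

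The recursive step requires infinitely many pairwise disjoint MCDQLS$(m')$s, not just one. To get them, we first check, via Lemma~\ref{MCDQLS-SDPC2} and its remark, that every entry of the MCDQLS produced has nonzero component sum. Then Lemma~\ref{many MCIDQLS$(m+h;h)$} with $h=0$ yields infinitely many disjoint copies. The base cases are then $27=3\cdot 9$ and $9$, which are exceptions, and $45=9\cdot5$, which needs disjoint MCDQLS$(5)$ and fails by Lemma~\ref{lem1.2.2}. Because of these failures we should expect to handle small cases individually.

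For $v$ that cannot be written as $nm$ with $m\ge 13$, $3\nmid m$ and $n\ne 3$, we turn to Theorem~\ref{MCDQLS-SDPC}, which requires $n$ even. With $n$ even, $m$ odd and $w$ odd, the order $mn+w$ is odd. So we would write $v=mn+w$ with $4\mid n$, using Lemma~\ref{DLS} for the DLS$(n)$ with $n$ disjoint transversals. The inputs from Lemma~\ref{disjoint MCQLS$(m)$+ MCIDQLS$(m+1;1)$} are:
\begin{itemize}
\item $m_p=1$ for all $p$, so that $w=u$;
\item $m\ge 7$ odd, so that idempotent MCQLS$(m)$s are available;
\item $w$ chosen so that an MCDQLS$(w)$ exists, e.g.\ $w\ge13$ prime or $w=v'$ obtained earlier.
\end{itemize}
Type-disjointness is automatic since all $m_p=1$. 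For example, $v=8m+w$ with $m\in\{7,9,11,13\}$, $n=8$ and $u=w\le 6$ gives too small a $w$, so instead we take $n=16$, allowing $u\le14$ and $w=13$. A few remaining sporadic values (such as $45, 57, 69, 81, 87, 93, 99,\dots$) would be covered by choosing $(n,m,w)$ individually, e.g.\ $57=3\cdot 19$ fails because $n=3$, but $57=4\cdot 11+13$ works.

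The main obstacle is the bookkeeping in the case $3\mid v$: showing that every such $v$ outside the exceptional list $\{9,15,21,27,33,39,51,63,75\}$ admits either a factorisation suitable for Lemma~\ref{MCDQLS-SDPC2} or a decomposition $v=mn+w$ suitable for Theorem~\ref{MCDQLS-SDPC}. The plan is to first cover all large $v$ uniformly by Theorem~\ref{MCDQLS-SDPC}, taking $n\equiv 0\pmod 4$ varying and $w$ ranging over an interval of admissible primes. Then the finitely many small leftover values are disposed of by hand, presenting them in a table as in the proof of Theorem~\ref{idempotent MCQLS$(v)$ -result-odd}.
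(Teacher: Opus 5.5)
You handle $3\nmid v$ correctly, exactly as the paper does, via Lemmas~\ref{MCDQLS-odd prime} and~\ref{MCDQLS-odd}. Product factorisations such as $117=9\cdot 13$ via Lemma~\ref{MCDQLS-SDPC2} are also valid. The gap is in the case $3\mid v$, and it is more than bookkeeping. Your singular direct product uses only holes with $m_p=1$, so $w=u\le n-2$, and with $m\ge 7$ this forces $v=mn+w\ge 8w+14$. An MCDQLS$(w)$ is available only for $w=1$ or for $w\ge 13$ with $3\nmid w$, and $n$ must be even. Under these constraints there are orders in the claimed range that neither this construction nor Lemma~\ref{MCDQLS-SDPC2} can reach:
\begin{itemize}
\item For $v=81$ you are forced to $w=1$. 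Then $80=mn$ leaves only $m\in\{20,10,8,5,4,2\}$. None is usable: the first three are possible exceptions for idempotent MCQLS, and the others are too small.
\item For $v\in\{87,123,135,147,159\}$ the bound $w\le (v-14)/8$ leaves only $w\in\{1,13,17\}$. Every such choice makes $v-w$ twice a prime, except $123-13=110$, where every even $n\ge 15$ dividing it leaves $m\le 5$.
\item None of these six orders factors as $nm$ with $n\ne 3$ and disjoint MCDQLS$(m)$'s available.
\end{itemize}
The problem is not confined to small orders either. With $m=7$, your uniform step only guarantees a suitable $s$ (with $3\nmid s$) once roughly $v\ge 566$. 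Below that, for example $363$ and $447$ also fail with the DLS$(n)$'s available in the paper.

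Your worked example is also wrong. In $57=4\cdot 11+13$, a DLS$(4)$ offers at most $u=n-2=2$ extra transversals, whereas $w=13$ with all $m_p=1$ needs $u=13$. Conversely, you discarded $n=8$ too quickly. Since $w=1$ is admissible (MCDQLS$(1)$ is trivial), $57=8\cdot 7+1$ works. Your extra restrictions $4\mid n$ and $m$ odd also needlessly exclude $99=14\cdot 7+1$ and $111=10\cdot 11+1$ (which use the DLS$(14)$ and DLS$(10)$ of Lemma~\ref{DLs-6,10,14}), and $129=8\cdot 16+1$.

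The missing idea is to use holes of size $m_p=m$ as well, i.e.\ MCIQLS$(2m,m)$'s. The paper obtains these mutually $m$-disjoint from the MCIQLS$(m+1,1)$'s and from the idempotent MCQLS$(m)$'s. Then $w=u_1+(u-u_1)m$ can be of order $m(n-2)$ while still $u\le n-2$. Type-disjointness is then no longer automatic. The paper gets it from Lemma~\ref{type disjoint}(3), using that the complete-mapping MCDQLS$(w)$ has no zero components.

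Concretely, take $m=7$, $n=4s$ (Lemma~\ref{DLS}) and $s=\lfloor v/28\rfloor-1$. Then $w=v-28s$ is odd in $[29,55]$, with $3\nmid w$ exactly when $3\nmid s$. When $3\mid s$, replace $s$ by $s-1$, which gives $w\in\{61,67,73,79\}$. In both cases $w$ splits into at most $13\le n-2$ parts equal to $7$ or $1$, with fewer than seven $1$'s. This covers all $v\ge 171$ together with $141,147,153,159$. The remaining orders are handled by a table, for example
\begin{itemize}
\item $81=4\cdot 16+(16+1)$,
\item $87=10\cdot 7+(2\cdot 7+3)$,
\item $123=14\cdot 7+(3\cdot 7+4)$,
\item $135=16\cdot 7+(3\cdot 7+2)$.
\end{itemize}
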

\begin{proof}
By Lemma \ref{MCDQLS-odd}, it suffices to take $v \geq 7$ and $3 \mid v$.  Set $v = 3a$, where $a\geq 3$ is an odd integer.

Let $n = 4s$ where $s$ is a positive integer. By lemma \ref{DLS}, there exists a DLS$(n)$ containing $n$ disjoint transversals, including the main diagonal and the anti-diagonal. 
Take $m = 7$.  By Lemma \ref{disjoint MCQLS$(m)$+ MCIDQLS$(m+1;1)$+MCIDQLS$(2m;m)$}, there exist $n-u$ pairwise disjoint idempotent MCQLS$(m)$, $u_1$ pairwise disjoint MCIQLS$(m+1,1)$, and $u-u_1$ pairwise disjoint MCIQLS$(2m,m)$ with $0 \leq u \leq n-2$ and $0 \leq u_1 \leq u$, such that all these MCQLS$(m)$s, MCIQLS$(m+1,1)$s, and MCIQLS$(2m,m)$s are pairwise $m$-disjoint. Arrange the $u_1$ MCIQLS$(m+1,1)$ and the $u-u_1$ MCIQLS$(2m,m)$ in order, set $m_p = 1$ if $p \leq u_1$, and $m_p = m$ if $p > u_1$.
Now let $w$ be an integer that can be partitioned into a sum of  $u_1$  $1$'s and  $u-u_1$  $m$'s with $u_1<m$.
Applying Theorem~\ref{MCDQLS-SDPC} and Lemma \ref{type disjoint}, if there exists an MCDQLS$(w)$ in which every entry has $m$ consecutive nonzereo components starting at a position whose index is congruent to $0$ modulo $m$, 
then an MCDQLS$(mn + w)$ exists.

For $v =3a\geq 3\cdot 57=171$ or $v\in\{141,147,153,159\}$, if $v$  can be written as $v=3a=mn+w=7\cdot 4s+w$ with $s = \lfloor \frac{v}{28} \rfloor - 1 \geq 4$ and $3 \nmid s$, then  we have that the odd integer $w$ is in $ [29,55]$ and $3 \nmid w$. Note that  when $w$ is patitioned into a sum of some $m$'s and $1$'s, the number of parts in the partition always can be  at most $13$, which is  less than or equal to $n-2=4s-2$. Otherwise,    $v$  will be written as $v=3a=mn+w=7\cdot 4(s-1)+(28+w')$ with $s = \lfloor \frac{v}{28} \rfloor - 1 \geq 5$ and $3 \mid s$, then  we have that the odd integer $w=28+w'$ is in $ \{61,67,73,79\}$. Note that for this case  when $w$ is patitioned into a sum of some $m$'s and $1$'s, the number of parts in the partition is at most $w-6\lfloor w/7\rfloor =13$, which is also less than or equal to $n-2=4(s-1)-2$. So,   an MCDQLS$(v)$ exists for $v =3a\geq 3\cdot 57=171$ or $v\in\{141,147,153,159\}$.

For $v = 3a$ with $3 \le a \le 55$ and $v \notin \{7,9,11,15,21,27,33,39,51,63,75\}\cup \{ 141,147,153,159\}$, the values $n, m, u, w$ can be chosen as shown in the following table when applying Theorem~\ref{MCDQLS-SDPC} and Lemma \ref{type disjoint}.  By Lemma \ref{disjoint MCQLS$(m)$+ MCIDQLS$(m+1;1)$}, the pairwise disjoint or m-disjoint conditions in Theorem~\ref{MCDQLS-SDPC} can be guaranteed. By Lemma \ref{DLS} and Lemma \ref{DLs-6,10,14}, a $\mathrm{DLS}(n)$ that has other $u$ pairwise disjoint transversals exists. Therefore, the corresponding MCDQLS$(v)$ exists. And  for $v=117=3\cdot 39=3^2\cdot 13$, taking $n=9$, $m=13$  in Lemma \ref{MCDQLS-SDPC2}, we obtain an $\operatorname{MCDQLS}(v)$  by Lemma \ref{disjoint MCIDQLS$(v)$} and Lemma \ref{DLS-existence}.

$$
\begin{tabular}{|c|c|c|c|}
\hline
$a$ & $v = n \times m + w$ & Integer partition of $w$ & $u\leq n-2$ \\
\hline
$3 \cdot 5$   & $4 \times 11 + 1$   &        & $u = 1 \leq 2$\\
\hline
$3 \cdot 11$  & $14 \times 7 + 1$   &       & $u = 1 \leq 10$\\
\hline
$3^2 \cdot 5$ & $16 \times 7 + 23$  & $23 = 7 \times 3 + 1 \times 2$ & $u = 5 \leq 14$ \\
\hline
$3^3$         & $4 \times 16 + 17$  & $17 = 16 \times 1 + 1$        & $u = 2 \leq 2$  \\
\hline
$5 \cdot 7$   & $8 \times 13 + 1$   &        & $u = 1 \leq 6$\\
\hline
$5 \cdot 11$  & $4 \times 41 + 1$   &        & $u = 1 \leq 2$\\
\hline
$7^2$         & $16 \times 7 + 35$  & $35 = 7 \times 5$    & $u = 5 \leq 14$ \\
\hline
$19$          & $8 \times 7 + 1$    &        & $u = 1 \leq 6$\\
\hline
$23$          & $4 \times 17 + 1$   &        & $u = 1 \leq 2$\\
\hline
$29$          & $10 \times 7 + 17$  & $17 = 7 \times 2 + 1 \times 3$ & $u = 5 \leq 6$ \\
\hline
$31$          & $4 \times 23 + 1$   &        & $u = 1 \leq 2$\\
\hline
$37$          & $10 \times 11 + 1$  &       & $u = 1 \leq 6$ \\
\hline
$41$          & $14 \times 7 + 25$  & $25 = 7 \times 3 + 1 \times 4$ & $u = 7 \leq 10$ \\
\hline
$43$          & $8 \times 16 + 1$  &  & $u = 1 \leq 6$ \\
\hline
\end{tabular}
$$

In summary, 
an MCDQLS$(v)$ exists for all odd integers $v\geq$ with $v\notin\{7,9,11,15,21,27,33,39,51,63,$ $75\}$. \qed 
\end{proof}

\begin{theorem}\label{MCDQLS-result-even}
  For any even integer \( v \geq 6 \), there   exists an MCDQLS$(v)$ except possibly when \( v \in \{6, 8, 10, 12, 14, 18, 20, 22, 24, 26, 30, 32, 34, 38,   40, 46, 48, 50, 58, 62, 74, 82, 94, 122\} \).

\end{theorem}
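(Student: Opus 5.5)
We follow the same strategy as in the proof of Theorem~\ref{MCDQLS-result-odd}, now for even $v$. The main tool is the singular direct product construction of Theorem~\ref{MCDQLS-SDPC} with $n=4s$. It is fed by the $\mathrm{DLS}(4s)$ with $4s$ disjoint transversals from Lemma~\ref{DLS}, by Lemma~\ref{DLs-6,10,14} for $n\in\{6,10,14\}$, and by the disjoint ingredients of Lemma~\ref{disjoint MCQLS$(m)$+ MCIDQLS$(m+1;1)$+MCIDQLS$(2m;m)$}. It is supplemented by two product constructions: Lemma~\ref{MCDQLS-SDPC2} (a DLS$(n)$ and $n$ disjoint MCDQLS$(m)$s, from Lemma~\ref{disjoint MCIDQLS$(v)$}) and Theorem~\ref{DQLS-square} for even squares.

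\textbf{Large orders.} Take $m=7$ and $n=4s$, so $mn=28s$ is even. We need an odd-sized correction $w$ when $v\equiv 2\pmod 4$ relative to $28s$, and in general we need $w=v-28s$ to be an order for which an MCDQLS$(w)$ exists. That MCDQLS$(w)$ must have the nonzero-consecutive-components property of Lemma~\ref{type disjoint}(3) (or $m_p=1$ throughout, so that type-disjointness is automatic). We choose $s=\lfloor v/28\rfloor-c$ for a small constant $c$. Then $w$ ranges over a window of length $28$ inside $[29,\,84]$, say. In that window we pick $w$ with an MCDQLS$(w)$ available: an odd $w$ covered by Theorem~\ref{MCDQLS-result-odd} and Lemma~\ref{MCDQLS-odd}, or an even $w$ already handled. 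We then check that $w$ splits into $u_1<7$ ones and some sevens with $u\le n-2$. The nonzero-component condition should follow from the explicit Fourier-type constructions, where every component has modulus $1/\sqrt{w}$; the recursive ones are covered via Remark~\ref{rmk1}. This settles all even $v$ above some bound, roughly $v\ge 200$. If an explicit MCDQLS$(w)$ is lacking for some residue, we shift $s$ by one so that $w$ moves by $28$.

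\textbf{Small orders.} For even $v$ below the bound we build a table as in the previous proofs. If $4\mid v$ or $v$ has a suitable factorisation, we use Lemma~\ref{MCDQLS-SDPC2} with $n\ge4$ and $m$ odd, $m\ge13$, $3\nmid m$ (or $m$ an even square $\ge16$); for example $v=4\cdot13,\ 4\cdot 17,\dots$. Even squares come from Theorem~\ref{DQLS-square}. For the rest we use Theorem~\ref{MCDQLS-SDPC} with $n\in\{4,6,8,10,12,14,16,\dots\}$, $m\in\{7,11,13,16,\dots\}$ and $u\le n-2$. Here $w$ is either a sum of $1$'s (with $w=u$ itself needing an MCDQLS$(w)$, e.g.\ $w=1$ via Remark~\ref{rmk2}) or a mixture of $1$'s and $m$'s with an MCDQLS$(w)$ available. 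The orders left uncovered are exactly the listed exceptions; for instance $22$, $34$, $40$, $46$ are orders where neither decomposition works with the available ingredients.

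\textbf{Main obstacle.} The hardest step is the bookkeeping for the large-order window. We must guarantee, for every residue of $v$ modulo $28$ (and modulo $56$ if parity forces it), a $w$ in range that simultaneously has an MCDQLS$(w)$ with the component property, admits a partition into at most $n-2$ parts of sizes $1$ and $7$ with fewer than $7$ ones, and keeps $n=4s$ large enough. The first attempt will be to restrict to odd $w\in[29,83]$ with $3\nmid w$, exactly as in the odd case, adjusting $s$ by one when needed. Whatever small even orders remain are then cleared by hand with alternative $(n,m)$ choices.
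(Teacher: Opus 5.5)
There is a genuine gap. Your large-order step fails on parity. Theorem~\ref{MCDQLS-SDPC} requires $n$ even, so $mn$ is even (here $28s$). For even $v$ the remainder $w=v-mn$ is therefore always even, and Lemma~\ref{MCDQLS-SDPC2} has no remainder at all. Three consequences follow:
\begin{itemize}
\item there is no ``odd-sized correction'', and your first attempt with odd $w\in[29,83]$, $3\nmid w$, copied from the odd case, is impossible;
\item $w=1$ never occurs (Remark~\ref{rmk2} is about $u=w=0$);
\item every use with $w>0$ needs an MCDQLS of an \emph{even} order $w$, and your proposal has no such stock.
\end{itemize}
The ``Fourier-type'' constructions do not exist for even $w$: Theorem~\ref{Construction--complete mapping} needs a strong complete mapping of $\mathbb{Z}_w$, and $\mathbb{Z}_w$ has no complete mapping when $w$ is even. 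Nor do the even orders in your window suffice. For $v\equiv 2,6,18\pmod{28}$ the candidates in $[29,84]$ are $30,58$, resp.\ $34,62$, resp.\ $46,74$, all among the listed exceptions; the first usable values are $114,90,102$. So the window argument has no base, and your claim that the uncovered orders are ``exactly the listed exceptions'' has no argument behind it.

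The missing idea is that the case $u=w=0$ of Theorem~\ref{MCDQLS-SDPC} needs only many disjoint \emph{idempotent} MCQLS$(m)$, not MCDQLS$(m)$. With $n$ even, the diagonal blocks (built from $B_{-1}$) and the anti-diagonal blocks (built from the column-reversed $B_{-2}$) are different blocks. Idempotent MCQLS$(m)$ exist in infinitely many disjoint copies for every $m\ge 6$ outside $B$ (Remark~\ref{MCIDQLS-R1}). This is how the paper gets:
\begin{itemize}
\item $v=4m$ with $n=4$;
\item $v=8m$ when $v/4\in\{14,18,26,30,32,38,62\}$;
\item $80=5\cdot 16$ and $96=6\cdot 16$ via Lemma~\ref{MCDQLS-SDPC2}, and $16$ from Theorem~\ref{DQLS-square};
\item $v=2a$ with $a$ odd composite, $a\ne 9,15,25$, using $n=2a/c$, $m=c$ for an odd proper divisor $c\ge 7$.
\end{itemize}
Only $v=2p$ then needs a remainder. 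The paper writes $v=28s'+w'$ with already constructed even orders $w'\in\{54,66,78,90,102,114\}$. These are all of the form $6c$, with nonzero component sums by Remark~\ref{rmk1}, so disjoint copies exist. It finishes with a table for $p\in\{43,53,59,67,71,73,79,101,107,113\}$. The exception list is exactly what this case split leaves:
\begin{itemize}
\item $8,12,20,24,32,40,48$ from $4\mid v$;
\item $18,30,50$ from $a\in\{9,15,25\}$;
\item $2p$ for $p\le 41$ or $p\in\{47,61\}$.
\end{itemize}
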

\begin{proof}
      If $4\mid v$, write $v=4m$. By Remark \ref{MCIDQLS-R1}, if $m\notin B=\{2, 3, 4, 5, 6, 8, 10, 12, 14, 18, 20, 24, 26, 30,   32, $ $38, 62\} $,    there exist infinitely many pairwise disjoint idempotent MCQLS$(m)$s. By Lemma \ref{DLS},   there exists a DLS$(4)$  containing $4$ disjoint transversals, including the main diagonal and the anti-diagonal. Taking $n=4$ and $u=w=0$, then an MCDQLS$(v)$ exists by applying Theorem \ref{MCDQLS-SDPC}.

If $v=4m'$ and
$m' \in \{14,18,26,30,32,38,62\}$, rewrite $v$ as $v=8m$ where $m \in \{7,9,13,15,16,19,31\}$. By Lemma \ref{disjoint MCIDQLS$(v)$},  there exist infinitely many pairwise disjoint idempodent MCQLS$(m)$. By Lemma \ref{DLS-existence},   there exists a DLS$(8)$.
Taking  $n=8$ and $u=w=0$, then an MCDQLS$(v)$ exists by applying Theorem \ref{MCDQLS-SDPC}. Similarly, when
$v=4\cdot 20=5\times 16$ or $v=4\cdot 24= 6\times 16$, by Lemma \ref{disjoint MCIDQLS$(v)$},  there exist infinitely many pairwise disjoint MCDQLS$(16)$ and by Lemma \ref{DLS-existence},   there exists an DLS$(5)$ and an DLS$(6)$. 
Then we obtain an $\operatorname{MCDQLS}(v)$  by Lemma \ref{MCDQLS-SDPC2}.

For the case when $v=2a$ with $a\geq 9 $ being an odd  composite number and $a\neq 9,15,25$,     $a$ has an odd factor $ c\geq 7$.  Then there exist an DLS$(v/c)$ by Lemma \ref{DLS-existence}, and there exist infinitely many pairwise disjoint idempodent MCQLS$(c)$s by Lemma \ref{disjoint MCIDQLS$(v)$}. Taking $n=v/c$, $m=c$  and $u=w=0$,  an MCDQLS$(v)$ exists by applying Theorem \ref{MCDQLS-SDPC}.

For the case when \( v = 2p \), with \( p \ge 3 \) an odd prime, we write \( v \) as
\(
v = 2p = 4s \times 7 + w = 28s + w,
\)
where
\(
s = \left\lfloor \frac{v}{28} \right\rfloor - 1
    = \left\lfloor \frac{p}{14} \right\rfloor - 1\geq 0,
\)
and \( w \) satisfies
\(
w \equiv 2 \pmod{4}, \ 30 \le w \le 54.
\) Further observation shows that $w$ cannot be equal to $42$; otherwise we would deduce the prime $p = 7$, contradicting $v = 14 < 42$.
Rewrite $v = 2p$ again according to the value of $w$ as follows:
\begin{equation*}
    v = 2p = 
    \begin{cases} 
        28(s-3) + w' = 4(s-3) \times 7 + (w+84), & \text{if\ } w=30, \\
        28(s-2) + w' = 4(s-2) \times 7 + (w+56), & \text{if\ } w \in \{34,  46\}, \\
        28(s-1) + w' = 4(s-2) \times 7 + (w+28), & \text{if\ } w \in \{ 38,  50\}, \\
        28s + w' = 4s \times 7 + w, & \text{if\ } w=54.
    \end{cases}
\end{equation*}
From the previous construction of the MCDQLS($w'$) with $w' \in \{54, 66, 78, 90, 102, 114\}$, we know that the resulting MCDQLS($w'$) satisfies the property that the sum of the components of each entry is nonzero, by Remark~\ref{rmk1}.
Then by Lemma \ref{many MCIDQLS$(m+h;h)$}, there exist infinitely many disjoint MCDQLS$(w')$s. Partitioning $w'$ into a sum of some $7$'s and $1$'s, the maximum number $u$ of parts is $w' - 6\lfloor w'/7\rfloor = 12$ for $w' \in \{54, 66, 78\}$, and $w' - 6\lfloor w'/7\rfloor = 18$ for $w' \in \{90, 102, 114\}$. Furthermore, when $p \geq 127$ or $p \in \{83, 89, 97, 103, 109\}$, we have
\(
u \leq \frac{v - w'}{7} - 2.
\) 
Thus, in Theorem~\ref{MCDQLS-SDPC}, by taking $n = \frac{v - w'}{7}$ (where $4 \mid n$), $m = 7$,   $u \leq w' - 6\lfloor w'/7\rfloor$ and $m_1=m_2=\dots =m_u=1$, we obtain an MCDQLS$(v)$.

Similarly, for $p \in \{43,53,59,67,71,73,79,101,107,113\}$, we write $v = 2p$ in the form $v = n \times m + w$, with the values listed in Table below, where
\(
w = u_1 \times m + (u - u_1), \  0 \le u_1 \le u \le n-2.
\)
Note that there exists a DLS$(n)$ which contains, apart from the main anti-diagonal, $u_{0}$ disjoint transversals.  Setting $m_1=m_2=\dots =m_{u-u_1}=1$ and $m_{u-u_1+1}=m_{u-u_1+2}=\dots =m_{u}=m$, we obtain an an MCDQLS$(v)$,  by Theorem~\ref{MCDQLS-SDPC}, Lemma \ref{type disjoint}  and Lemma \ref{disjoint MCQLS$(m)$+ MCIDQLS$(m+1;1)$+MCIDQLS$(2m;m)$}.  

\begin{table}[h]
\centering
\begin{tabular}{|c|c|c|c|}
\hline
$p$ & $v = n \times m + w$ & $w = u_1 \times m + (u - u_1)$ & $u \leq u_{0} (\leq n-2$ )\\
\hline
43  & $86 = 10 \times 7 + 16$   & $16 = 2 \times 7 + 2$     & $4 \leq 6$   \\
\hline
53  & $106 = 12 \times 7 + 22$   & $22 = 3 \times 7 + 1$     & $4 \leq 10$   \\
\hline
59  & $118 = 10 \times 9 + 28$  & $28 = 3 \times 9 + 1$     & $4 \leq 6$   \\
\hline
67  & $134 = 14 \times 7 + 36$  & $36 = 5 \times 7 + 1$     & $6 \leq 10$ \\
\hline
71  & $142 = 14 \times 9+ 16$  & $16 = 1 \times 9 + 7$    & $8 \leq 10$ \\
\hline
73  & $146 = 8 \times 13 + 42$  & $42 = 3 \times 13 + 3$    & $6 \leq 6$   \\
\hline
79  & $158 = 8 \times 13 + 54$  & $54 = 4 \times 13 + 2$    & $6 \leq 6$   \\
\hline
101 & $202 = 12 \times 11 + 70$ & $70 = 6 \times 11 + 4$    & $10 \leq 10$ \\
\hline
107 & $214 = 16 \times 9 + 70$  & $70 = 7 \times 9 + 7$     & $14 \leq 14$ \\
\hline
113 & $226 = 14 \times 13 + 44$  & $44 = 3 \times 13 + 5$     & $8 \leq 10$  \\
\hline
\end{tabular}
\end{table}

So when $v = 2p$, $p \geq 3$ and $v\not\in\{ 6, 10, 14, 22, 26, 34, 38, 46, 58, 62, 74, 82, 94,  122\}$, an $\mathrm{MCDQLS}(v)$ exists.

In summary, the conclusion is proved.\qed
    \end{proof}

\begin{remark}
The MCDQLSs obtained in the proof of Theorem \ref{MCDQLS-result-odd} and Theorem \ref{MCDQLS-result-even} satisfy that each entry has a nonzero sum of its components; thus we can also obtain infinitely many disjoint MCDQLSs for the corresponding orders.
\end{remark}

\section{Conclusion}

In this paper, by employing direct constructive methods based on  row-quantum Latin rectangle,  strong complete mapping 
together with recursive techniques such as product constructions and singular direct product constructions, 
we have almost completely determined the existence spectrum of MCDQLS$(n)$, 
leaving only some possible exceptional orders for further investigation. That is, our main result is as follows.

\begin{theorem}
For $n \in \{2, 3, 4, 5\}$, there does not exist an $\mathrm{MCDQLS}(n)$.
For any integer $n \geq 6$, there exists  an $\mathrm{MCDQLS}(n)$  except possibly when $n \in P_1\cup P_2$, 
where 
 \begin{align*}
        P_1 &= \{6,8,10,12,14,18,20,24,26,30,32,38,62\}, \\
        P_2 &= \{7, 9,11,15, 21,22, 27, 33, 34,39, 40,46,48,50,51, 58, 63,74, 75,82,94,122\}.
           \end{align*}
\end{theorem}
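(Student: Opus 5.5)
The final theorem is a summary, so I would prove it by splitting into the small orders $n\le 5$ and the range $n\ge 6$, and in each part invoke the results already established.

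For $n\in\{2,3,4,5\}$ I would argue non-existence directly. An MCDQLS$(n)$ has cardinality $n^2$, so in particular it is non-classical, since a classical QLS$(n)$ has cardinality exactly $n$. It is also idempotent, because its main diagonal is a transversal. Lemma~\ref{lem1.2.2} says there is no non-classical idempotent DQLS$(n)$ for these $n$, so no MCDQLS$(n)$ exists. For $n=2,3$ one can alternatively cite \cite{Paczos21}, where the possible cardinalities are shown to be only $2$ and $3$ respectively, both less than $n^2$.

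For $n\ge 6$ I would split by parity. Odd $n\ge 7$ is covered by Theorem~\ref{MCDQLS-result-odd}, which leaves as possible exceptions exactly $\{7,9,11,15,21,27,33,39,51,63,75\}$. This set is contained in $P_2$, and the odd members of $P_1\cup P_2$ coincide with it. Even $n\ge 6$ is covered by Theorem~\ref{MCDQLS-result-even}, whose exceptional set is
\[
\{6,8,10,12,14,18,20,22,24,26,30,32,34,38,40,46,48,50,58,62,74,82,94,122\}.
\]
I then need to check that this set is contained in $P_1\cup P_2$. The elements $6,8,10,12,14,18,20,24,26,30,32,38,62$ lie in $P_1$, and the elements $22,34,40,46,48,50,58,74,82,94,122$ lie in $P_2$. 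So every $n\ge 6$ outside $P_1\cup P_2$ admits an MCDQLS$(n)$.

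The main step is therefore just bookkeeping: matching the exceptional lists of the two existence theorems against $P_1$ and $P_2$. The only substantive point needing care is the small-order non-existence, where I must justify that maximum cardinality forces the square to be non-classical and idempotent so that Lemma~\ref{lem1.2.2} applies. Both facts are immediate from the definitions, since full cardinality $n^2>n$ rules out classicality and a diagonal transversal is the idempotent condition.
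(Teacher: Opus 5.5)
Your proposal is correct and is essentially what the paper does. The concluding theorem has no separate proof; it just assembles three earlier results:
\begin{itemize}
\item Lemma~\ref{lem1.2.2} gives non-existence for $n\in\{2,3,4,5\}$. It applies because full cardinality $n^2>n$ forces the square to be non-classical, and a diagonal square is idempotent.
\item Theorem~\ref{MCDQLS-result-odd} handles odd $n\geq 7$.
\item Theorem~\ref{MCDQLS-result-even} handles even $n\geq 6$.
\end{itemize}
The union of the exceptional sets of the two theorems is exactly $P_1\cup P_2$, as your bookkeeping shows.
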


In fact, we also   determined the existence  spectrum  of idempotent  MCQLS$(n)$, i.e., for $n \in \{2, 3, 4, 5\}$, there does not exist an $\mathrm{MCQLS}(n)$;  and for any integer $n \geq 6$, there exists an idempotent $\mathrm{MCQLS}(n)$  except possibly when $n \in P_1$.

Another byproduct of this paper is that, parallel to the classical Latin square theory where the necessary and sufficient condition for the existence of PLS$(n)$ is $\gcd(n,6)=1$, we determine that an MCPQLS$(n)$ exists when $\gcd(n,6)=1$ and $n \notin \{5, 7, 11\}$. Here, an MCPQLS$(5)$ does not exist, and the existence of MCPQLS$(7)$, MCPQLS$(11)$, and MCPQLS$(n)$ with $\gcd(n,6)\neq 1$ remains to be determined.

At the end of the paper, we point out that one can also use an QLS$(n)$ with $n$ disjoint transversals to construct a pair of orthogonal quantum Latin squares of order $n$. The following definition of a pair of orthogonal quantum Latin squares is due to Goyeneche et al.~\cite{cite3}.
Suppose there exist two $\text{QLS}(n)$, $U = (\ket{u_{i,j}})$ and $V = (\ket{v_{i,j}})$,
$0 \le i,j \le n-1$. If
\[
\ket{u_{i,j}} \otimes \ket{v_{i,j}} : 0 \le i,j \le n-1 
\]
forms an orthonormal basis of $\mathcal{H}_n^{\otimes 2} = \mathcal{H}_n \otimes \mathcal{H}_n$, then $U$ and $V$ are called a pair of orthogonal quantum Latin squares of order $n$, denoted by $2$-$\text{MOQLS}(n)$. If at least one of the pair of orthogonal quantum Latin squares is non-classical, it is called a non-classical $2$-$\text{MOQLS}(n)$.

In Lemma \ref{MCPDQLS-odd}, the corresponding $\mathrm{MCQLS}(7)$ $A$  has $7$ disjoint transversals parallel to the main diagonal. Construct $B=(\ket{b_{i,j}})$, where $\ket{b_{i,i+j}} = \ket{j}, i, j \in \mathbb{Z}_7$. This is a classical $\mathrm{QLS}(n)$ in which the entries on the main diagonal and on the broken diagonals in the same direction are respectively identical. One can easily verify that $A$ and $B$ form a pair of orthogonal quantum Latin squares of order $n$. 
    \begin{theorem}\label{MOQLS7}
        There exists a non-classical $2$-${MOQLS}(7)$. 
    \end{theorem}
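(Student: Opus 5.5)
The plan is to use the Fourier-type construction of Theorem~\ref{Construction--complete mapping} for $v=7$. It needs a complete mapping $\mu$ of $\mathbb{Z}_7$ satisfying $(*)$, for instance the one from Case~2 of Lemma~\ref{MCIDQLS-odd}. This gives $A=(\ket{\phi_{i,j}})$ with
\[
\ket{\phi_{i,j}}=\tfrac{1}{\sqrt7}\bigl(\omega^{tj+i\mu(t)}\bigr)_{t\in\mathbb{Z}_7},\qquad \omega=e^{2\pi \mathrm{i}/7}.
\]
Since its cardinality is $49$, $A$ is non-classical. By the discussion preceding Corollary~\ref{MCPDMQLS}, which uses only that $\mu+\mathrm{id}$ is a permutation, every broken diagonal $T_s=\{\ket{\phi_{i,j}}: i-j\equiv s\}$ is a transversal of $A$. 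This is the content of Lemma~\ref{MCPDQLS-odd} for $v=7$.

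Next I take the classical square $B$ given by $\ket{b_{i,j}}=\ket{j-i \bmod 7}$, so $\ket{b_{i,i+j}}=\ket{j}$. It is clearly a QLS$(7)$, and it is constant on each broken diagonal parallel to the main diagonal: cells with $j-i\equiv s$ all carry $\ket{s}$.

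To check orthogonality, I verify that the $49$ vectors $\ket{\phi_{i,j}}\otimes\ket{j-i}$ are pairwise orthogonal; they are unit vectors, so $49$ of them then form an orthonormal basis of $\mathcal{H}_7\otimes\mathcal{H}_7$. Take two distinct cells $(i,j)\neq(i',j')$. The inner product factors as $\braket{\phi_{i,j}\mid\phi_{i',j'}}\,\braket{j-i\mid j'-i'}$. If $j-i\neq j'-i'$, the second factor is $0$. If $j-i=j'-i'=s$, both cells lie on the broken diagonal $T_{-s}$. Since they are distinct they have $i\ne i'$, so the first factor is $0$ by the transversal computation
\[
\braket{\phi_{i,i+s}\mid\phi_{i',i'+s}}=\tfrac17\sum_t\omega^{(i'-i)(\mu(t)+t)}=0 .
\]
Hence $A$ and $B$ form a $2$-MOQLS$(7)$ with $A$ non-classical.

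The only real point needing care is confirming that the Case~2 map for $v=7$ is indeed a complete mapping. For $v=7$, Case~2 gives $\mu=(0,2,5,6,1,3,4)$ on $t=0,\dots,6$, so $\mu+\mathrm{id}=(0,3,0,\dots)$ modulo $7$, which is not a permutation. The general formula therefore has to be checked directly at $v=7$. If it fails, as here, I would take any orthomorphism-type choice such as $\mu(t)=2t$, for which $\mu+\mathrm{id}=3t$ is a permutation. Non-classicality of $A$ holds regardless of $(*)$, since each $\ket{\phi_{i,j}}$ has all entries of modulus $1/\sqrt7$. The theorem only needs non-classical, not maximum cardinality, so any complete mapping suffices.
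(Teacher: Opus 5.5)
Your overall architecture is the paper's. You use the complete-mapping square $A$ of Theorem~\ref{Construction--complete mapping} at $v=7$, whose broken diagonals $i-j\equiv s$ are transversals because $\mu+\mathrm{id}$ is a permutation. You pair it with the cyclic classical square $B$, which is constant on those diagonals, and factor the inner product. That part is correct.

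The trouble is your last paragraph, starting with your evaluation of the Case~2 map, which is wrong. For $v=7$ the index blocks are $[0,1]$, $\{2,5\}$, $\{3\}$ and $\{4,6\}$. This gives $\mu=(0,1,3,5,2,6,4)$, so $\mu+\mathrm{id}=(0,2,5,1,6,4,3)$, which is a permutation. Also $f(6)=6\mu(1)-\mu(6)-5\mu(0)=2\not\equiv 0 \pmod 7$. So Case~2 of Lemma~\ref{MCIDQLS-odd} does supply a complete mapping satisfying $(*)$, and your primary plan is exactly the paper's proof with an MCQLS$(7)$.

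The real gap is the fallback $\mu(t)=2t$, together with your claim that any complete mapping suffices. With this choice, $\ket{\phi_{i,j}}=\tfrac{1}{\sqrt7}\bigl(\omega^{t(j+2i)}\bigr)_t=f_{j+2i}$. Hence $A=F_7L$, where $L(i,j)=j+2i$ is a classical Latin square, and $A$ has cardinality $7$. Your pair is then just $(F_7\otimes I)$ applied to the classical orthogonal pair $(L,B)$.

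Your justification is that every entry has modulus $1/\sqrt7$, so $A$ is non-classical regardless of $(*)$. That uses only the basis-dependent wording of the definition. Read that way, Fourier-transforming one square of any pair of orthogonal Latin squares would give a non-classical $2$-MOQLS$(n)$ for every order that admits such a pair, $7$ included. Then order $7$ could not be the remaining open case the paper says it settles.

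What is actually needed is that $A$ is not equivalent to any classical square. The paper certifies this through cardinality $49$, which is invariant under unitaries, row and column permutations, and phase changes. That is precisely what condition $(*)$ provides. It fails for every affine $\mu$, since the determinant in $(*)$ vanishes identically.

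So drop the fallback and the sentence that any complete mapping suffices. With the correctly evaluated Case~2 map, your argument is complete.
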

This answers the open problem raised in Ref.~\cite{cite38}, 
thereby completely resolving the existence problem of non-classical $2$-MOQLS$(n)$ 
by confirming the last remaining exceptional order in Theorem \ref{MOQLS7}.

\section*{Acknowledgements}
The authors would like to thank Prof.~L. Zhu for several helpful suggestions on this topic.

\section*{Appendix A}

\noindent \textbf{Claim $1$.}
 \( M \) is a  QLS$(mn+w)$.
\ 

\

\begin{proof}
As all entries of \( M \) are unit vectors, we only need to check that the rows and columns of \( M \) are mutually orthogonal. We verify the row orthogonality here; the column case is similar.
To prove that  any two entries \(\ket{\alpha} \) and \(\ket{\beta}\) in the same row  
of \(M\) are orthogonal, eight cases are considered.  
\paragraph{Case 1.} \(\alpha, \beta \in \ket{a_{i,j}} \otimes_{+} B_q\), $-2\leq q \leq n-1$.

\noindent Let \(\alpha = \ket{a_{i,j}} \otimes_{+} \ket{b_{q,k,l}}\), \(\beta = \ket{a_{i,j}} \otimes_{+} \ket{b_{q,k,l'}}\), where \(l \neq l'\). Since $B$ is an MCQLS\((m)\) or an  idempotent MCQLS\((m)\)   \((\ket{b_{q,k,l}}, \ket{b_{q,k,l'}}) = 0\), we have
\begin{align*}
    (\alpha, \beta) &= \left( \begin{pmatrix}
\ket{a_{i,j}} \otimes \ket{b_{q,k,l}} \\
\mathbf{0}_{w}
\end{pmatrix},
\begin{pmatrix}
\ket{a_{i,j}} \otimes \ket{b_{q,k,l'}} \\
\mathbf{0}_{w}
\end{pmatrix} \right) \\
&= (\ket{a_{i,j}} \otimes \ket{b_{q,k,l}}, \ket{a_{i,j}} \otimes \ket{b_{q,k,l'}}) \\
&=\braket{a_{i,j}|a_{i,j}}\braket{b_{q,k,l}|b_{q,k,l'}}\\
&= 0.
\end{align*}

\paragraph{Case 2.} \(\alpha, \beta \in \ket{a_{p,i,j}} \otimes_{\widehat{p}} C_{p}\), $1\leq p \leq u$.

\noindent Since $C_p$ is an MCIQLS\((m+m_p,m_p)\)  and \((\ket{c_{p,s,t}}, \ket{c_{p,s,t'}}) = 0\), we have

\[
            \begin{aligned}
                (\alpha, \beta) &= \left( \begin{pmatrix}
                    \ket{a_{p,i,j}} \otimes \ket{c_{p,s,t,[0,m-1]}}\\
                    \mathbf{0}_{m_1}\\ \vdots\\
                    \ket{c_{p,s,t,{[m,m+m_p-1]}}}\\ \vdots\\
                    \mathbf{0}_{m_u}
                \end{pmatrix}, \begin{pmatrix}
                    \ket{a_{p,i,j}} \otimes \ket{c_{p,s,t',[0,m-1]}}\\
                    \mathbf{0}_{m_1}\\ \vdots\\
                    \ket{c_{p,s,t',[m,m+m_p-1]}}\\ \vdots\\
                    \mathbf{0}_{m_u}
                \end{pmatrix} \right)\\
                &= \braket{a_{p,i,j}|a_{p,i,j}} \braket{c_{p,s,t,[0,m-1]}|c_{p,s,t',[0,m-1]}} + \braket{c_{p,s,t,[m,m+m_p-1]} | c_{p,s,t',[m,m+m_p-1]} }\\
                &= \braket{c_{p,s,t}|c_{p,s,t'}}\\
                &= 0.
            \end{aligned}
            \]

\paragraph{Case 3.} \(\alpha \in \ket{a_{i,j}} \otimes_{+} B_q, \ \beta \in \ket{a_{i,j'}} \otimes_{+} B_{q'}\), $-2\leq q, q' \leq n-1$, \(j \neq j'\).

\noindent Let \(\alpha = \ket{a_{i,j}} \otimes_{+} \ket{b_{q,k,l}}\), \(\beta = \ket{a_{i,j'}} \otimes_{+} \ket{b_{q',k,l'}}\). Since \(\braket{a_{i,j}|a_{i,j'}} = 0\), we have
\begin{align*}
    (\alpha, \beta)&= \left( \begin{pmatrix}
\ket{a_{i,j}} \otimes \ket{b_{q,k,l}} \\
\mathbf{0}_{w}
\end{pmatrix},
\begin{pmatrix}
\ket{a_{i,j'}} \otimes \ket{b_{q',k,l'}} \\
\mathbf{0}_{w}
\end{pmatrix} \right) \\ &= (\ket{a_{i,j}} \otimes \ket{b_{q,k,l}}, \ket{a_{i,j'}} \otimes \ket{b_{q',k,l'}}) \\
&=\braket{a_{i,j}|a_{i,j'}}\braket{b_{q,k,l}|b_{q',k,l'}}  \\
&= 0.
\end{align*}

\paragraph{Case 4.} 
            $\alpha \in \ket{a_{i,j}} \otimes_{+} B_q$,  $\beta \in \ket{a_{p,i,j'}} \otimes_{\widehat{p}} C_{p}$, $1\leq p \leq u$, $j \neq j'$.
            
\noindent Let \(\alpha = \ket{a_{i,j}} \otimes_{+} \ket{b_{q,k,l}}\), \(\beta = \ket{a_{p,i,j'}} \otimes_{\widehat{p}} \ket{c_{p,k,t}}\). Since \(\braket{a_{i,j}|a_{p,i,j'}} = 0\), we have
 \[
            \begin{aligned}
                (\alpha, \beta) &= \left( \begin{pmatrix}
                    \ket{a_{i,j}} \otimes \ket{b_{q,k,l}}\\  \mathbf{0}_{w}
                \end{pmatrix}, \begin{pmatrix}
                    \ket{a_{p,i,j'}} \otimes \ket{c_{p,k,t,[0,m-1]}}\\  \mathbf{0}_{m_1}\\ \vdots\\ \ket{c_{p,k,t,[m,m+m_p-1]}}\\ \vdots\\ \mathbf{0}_{m_u}
                \end{pmatrix} \right)\\
                &= \braket{a_{i,j}|a_{p,i,j'}} \braket{b_{q,k,l}|c_{p,k,t,[0,m-1]}} + 0\\
                &= 0.
            \end{aligned}
            \]

\paragraph{Case 5.} \(\alpha \in \ket{a_{p,i,j}} \otimes_{\widehat{p}} C_{p}\), \( \beta \in \ket{a_{p',i,j'}} \otimes_{\widehat{p'}} C_{p'}\), $1\leq p, p' \leq u$, \(p \neq p', j \neq j'\).

\noindent Let \(\alpha = \ket{a_{i,j}} \otimes_{\widehat{p}} \ket{c_{p,s,t}}\), \(\beta = \ket{a_{i,j'}} \otimes_{\widehat{p'}} \ket{c_{p',s,t'}}\). Since \(\braket{a_{i,j}|a_{i,j'}}= 0\), we have
\begin{align*}
    (\alpha, \beta) &= \left( \begin{pmatrix}
\ket{a_{i,j}} \otimes \ket{c_{p,s,t,[m-1]}} \\ \mathbf{0}_{m_{1}}\\ \vdots \\ \ket{c_{p,s,t,[m,m+m_p-1]}}\\ \vdots \\ \mathbf{0}_{m_{u}}
    \end{pmatrix},
\begin{pmatrix}
\ket{a_{i,j}} \otimes \ket{c_{p',s,t',[m-1]}}\\ \mathbf{0}_{m_{1}}\\ \vdots \\ \ket{c_{p',s,t',[m,m+m_{p'}-1]}}\\ \vdots \\ \mathbf{0}_{m_{u}}
    \end{pmatrix} \right) \\
&= \braket{a_{i,j}|a_{i,j}}\braket{c_{p,s,t,[0,m-1]}|c_{p,s,t',[0,m-1]}}\\
&= 0.
\end{align*}

\paragraph{Case 6.} \(\alpha \in \ket{a_{p,i,j}} \otimes_{\widehat{p}} C_{p2}\), \( \beta \in \ket{a_{p,i',j'}} \otimes_{\widehat{p}} C_{p2}\),  $i\neq i'$, $j\neq j'$.

\noindent Let \(\alpha = \ket{a_{p,i,j}} \otimes_{\widehat{p}} \ket{c_{p,s,t}}\), \(\beta = \ket{a_{p,s',t'}} \otimes_{\widehat{p}} \ket{c_{p,s,t'}}\). Since \(\braket{a_{p,i,j}|a_{p,i',j'}}= 0\) and
$\ket{c_{p,s,t,[m,m+m_p-1]}}=\ket{c_{p,s,t',[m,m+m_p-1]}}=\mathbf{0}_{m_p }$, we have
\begin{align*}
    (\alpha, \beta) &= \left( \begin{pmatrix}
\ket{a_{p,i,j}} \otimes \ket{c_{p,s,t,[0,m-1]}} \\ \mathbf{0}_{m_{1 }}\\ \vdots \\ \ket{c_{p,s,t,[m,m+m_p-1]}}\\ \vdots \\ \mathbf{0}_{m_{u}}
    \end{pmatrix},
\begin{pmatrix}
\ket{a_{p,i',j'}} \otimes \ket{c_{p,s,t',[0,m-1]}}\\ \mathbf{0}_{m_{1}}\\ \vdots \\ \ket{c_{p,s,t',[m,m+m_p-1]}}\\ \vdots \\ \mathbf{0}_{m_{u}}
    \end{pmatrix} \right) \\
&= \braket{a_{p,i,j} | a_{p,i',j'}}\braket{c_{p,s,t,[0,m-1]}|c_{p,s,t',[0,m-1]}}+ (\mathbf{0}_{m_p }, \mathbf{0}_{m_p })\\
&= 0.
\end{align*}

\paragraph{Case 7.} \(\alpha \in \ket{a_{p,i,j}} \otimes_{\widehat{p}} C_{p2}\), \( \beta \in  M_w\).

\noindent Let \(\alpha = \ket{a_{p,i,j}} \otimes_{\widehat{p}} \ket{c_{p,s,t}}\), \(\beta =
\begin{pmatrix}
\mathbf{0}_{mn} \\
\ket{d_{e,f}}
\end{pmatrix}\). Since \(\ket{c_{p,s,t,[m,m+m_p-1]}} = \mathbf{0}_{m_{p }}\), we have
\begin{align*}
    (\alpha, \beta) &= \left( \begin{pmatrix}
\ket{a_{p,i,j}} \otimes \ket{c_{p,s,t,[0,m-1]}} \\ \mathbf{0}_{m_{1 }}\\ \vdots \\ \ket{c_{p,s,t,[m,m+m_p-1]}}\\ \vdots \\ \mathbf{0}_{m_{u}}
    \end{pmatrix},
\begin{pmatrix}
\mathbf{0}_{mn} \\
\ket{d_{e,f}}
\end{pmatrix} \right) \\
&= 0.
\end{align*}

\paragraph{Case 8.} \(\alpha, \beta\in M_w\).

\noindent Let $\alpha =
\begin{pmatrix}
\mathbf{0}_{mn} \\
\ket{d_{e,f}}
\end{pmatrix}
, \beta = \begin{pmatrix}
\mathbf{0}_{mn} \\
\ket{d_{e,f'}}
\end{pmatrix}$, where $f\neq f'$. Since $\braket{d_{e,f}| d_{e,f'}} = 0$,  we have
\begin{align*}
    (\alpha, \beta) &= \left(
\begin{pmatrix}
\mathbf{0}_{mn} \\
\ket{d_{e,f}}
\end{pmatrix},
\begin{pmatrix}
\mathbf{0}_{mn} \\
\ket{d_{e,f'}}
\end{pmatrix} \right) \\
&=\braket{d_{e,f}| d_{e,f'}}\\
&= 0.
\end{align*}

In summary, \( M \) is a  QLS$(mn+w)$. 
\qed 
 \end{proof}

 \noindent \textbf{Claim $2$.}
 The cardinality of \( M \)  is the maximum.
\ 

\

\begin{proof}
To any  two entries $\alpha$ and $\beta$  of $M$ in distinct cells in the array, eight cases are considered.

\paragraph{Case 1.} \(\alpha, \beta \in \ket{a_{i,j}} \otimes_{+} B_q\), $-2\leq q \leq n-1$.

\noindent Let \(\alpha = \ket{a_{i,j}} \otimes_{+} \ket{b_{q,k,l}}\), \(\beta = \ket{a_{i,j}} \otimes_{+} \ket{b_{q,k',l'}}\), where \((k,l))\neq (k',l')\).  
Since the  cardinality of $B_q$ is the maximum and and the two unit vectors $\ket{b_{k,l}}$ and $\ket{b_{k',l'}}$ are  distinct, then by Lemma \ref{state vector}, $\ket{a_{i,j}} \otimes \ket{b_{k,l}}\neq \ket{a_{i,j}} \otimes \ket{b_{k,l'}}$. Thus $\alpha \neq \beta$.

\paragraph{Case 2.} \(\alpha, \beta \in \ket{a_{p,i,j}} \otimes_{\widehat{p}} C_{p}\), $1\leq p \leq u$.

\noindent Let
  \[
           \alpha = \ket{a_{p,i,j}} \otimes_{\widehat{p}} \ket{c_{p,s,t}} = \begin{pmatrix}
                   \ket{a_{p,i,j}} \otimes \ket{c_{p,s,t,[0,m-1]}}\\
                   \mathbf{0}_{m_1}\\ \vdots\\ \ket{c_{p,s,t,[m,m+m_p-1]}}\\ \vdots\\ \mathbf{0}_{m_u}
               \end{pmatrix}, 
           \beta = \ket{a_{p,i,j}} \otimes_{\widehat{p}} \ket{c_{p,s',t'}} = \begin{pmatrix}
                   \ket{a_{p,i,j}} \otimes \ket{c_{p,s',t',[0,m-1]}}\\
                   \mathbf{0}_{m_1}\\ \vdots\\ \ket{c_{p,s',t',[m,m+m_p-1]}}\\ \vdots\\ \mathbf{0}_{m_u}
               \end{pmatrix}
           \] where $(s,t) \neq (s',t')$. If $\alpha = \beta$, then there would exist a real number $\theta\in [0,2\pi)$ such that
           \[ \ket{a_{p,i,j}} \otimes \ket{c_{p,s,t,[0,m-1]}} = e^{i\theta} \ket{a_{p,i,j}} \otimes \ket{c_{p,s',t',[0,m-1]}},\] 
              \[  c_{p,s,t,[m,m+m_p-1]} = e^{i\theta} c_{p,s',t',[m,m+m_p-1]}.\]
We would have  $\ket{c_{p,s,t}} = e^{i\theta} \ket{c_{p,s',t'}}$, which contradicts the fact that $C_p$ is an  MCIQLS$(m+m_p, m_p)$.  Thus $\alpha \neq \beta$.

\paragraph{Case 3.} \(\alpha, \beta \in M_w\). 

\noindent Let $\alpha = \begin{pmatrix}
               \mathbf{0}_{mn}\\ \ket{d_{e,f}}
           \end{pmatrix}$, $\beta = \begin{pmatrix}
               \mathbf{0}_{mn}\\ \ket{d_{e',f'}}
           \end{pmatrix}$, where $(e,f) \neq (e',f')$.
 The distinctness of \(\ket{d_{e,f}}\) and \(\ket{d_{e',f'}}\) immediately implies  $\alpha \neq \beta$.

\paragraph{Case 4.} $\alpha \in \ket{a_{i,j}} \otimes_{+} B_{q}, \beta \in \ket{a_{i',j'}} \otimes_{+} B_{q'}$, $-2\leq q, q' \leq n-1$, $(i,j) \neq (i',j')$.

 \noindent Let \[
           \alpha = \ket{a_{i,j}} \otimes_{+} \ket{b_{q,k,l}} = \begin{pmatrix}
                   \ket{a_{i,j}} \otimes \ket{b_{q,k,l}}\\
                   \mathbf{0}_w
               \end{pmatrix},   
           \beta = \ket{a_{i',j'}} \otimes_{+} \ket{b_{q',k',l'}} = \begin{pmatrix}
                   \ket{a_{i',j'}} \otimes \ket{b_{q, k',l'}}\\
                   \mathbf{0}_w
               \end{pmatrix}.
           \]
          where $(i,j)\neq (i',j')$.
If $q \neq q'$, since $B_{-2},B_{-1}, B_{0},B_{1}, \dots B_{n-1}$ are pairwise disjoint,  the two unit vectors $\ket{b_{q,k,l}}$ and $\ket{b_{q',k',l'}}$ are  distinct. By Lemma \ref{state vector}, $\ket{a_{i,j}} \otimes \ket{b_{q,k,l}}\neq \ket{a_{i,j}} \otimes \ket{b_{q',k',l'}}$, and thus $\alpha \neq \beta$.
If $q = q'$ and $i \neq  i'$,       
since the cardinality of $B_q$ ($q= -1$ or $-2$) is the maximum and and the two unit vectors $\ket{a_{i,j}}$ and $\ket{a_{i',j'}}$ (on the main diagonal or anti-diagonal
 ) are  distinct, then by Lemma \ref{state vector}, $\ket{a_{i,j}} \otimes \ket{b_{q,k,l}}\neq \ket{a_{i',j'}} \otimes \ket{b_{q',k',l'}}$, and thus $\alpha \neq \beta$.
Similarly, if $q = q'$, $i = i'$, $j\neq j'$, the distinctness of \(\ket{a_{i,j}}\) and \(\ket{a_{i',j'}}\)  implies  $\alpha \neq \beta$.

\paragraph{Case 5.} $\alpha \in \ket{a_{i,j}} \otimes_{+} B_{q}, \beta \in \ket{a_{p,i',j'}} \otimes_{\widehat{p}} C_p$, $-2\leq q, q' \leq n-1$, $(i,j) \neq (i',j')$.

 \noindent Let \[
           \alpha = \ket{a_{i,j}} \otimes_{+} \ket{b_{q, k, l}} = \begin{pmatrix}
                   \ket{a_{i,j}} \otimes \ket{b_{q,k,l}}\\
                   \mathbf{0}_w
               \end{pmatrix},  
           \beta = \ket{a_{p,i',j'}} \otimes_{\widehat{p}} \ket{c_{p,s,t}} = \begin{pmatrix}
                   \ket{a_{p,i',j'}} \otimes \ket{c_{p,s,t,[0,m-1]}}\\
                   \mathbf{0}_{m_1}\\  \vdots\\ \ket{c_{p,s,t,[m,m+m_p-1]}}\\ \vdots\\ \mathbf{0}_{m_u}
               \end{pmatrix}.
           \]
           If $\alpha = \beta$, then we have
            \[ \ket{c_{p,s,t,[m,m+m_p-1]}} = \mathbf{0}_{m_p}, \] and
               \[  \ket{a_{i,j}} \otimes \ket{b_{q,k,l}}= \ket{a_{p,i',j'}} \otimes_{\widehat{p}} \ket{c_{p,s,t,[0, m-1]}}. \]
           Then by Lemma  \ref{state vector},  the latter contradicts the fact that $B_q$ and $C_p$ are $m$-disjoint and $\ket{b_{q,k,l}} \neq \ket{c_{p,s,t}}$.
      
\paragraph{Case 6.} $\alpha \in \ket{a_{i,j}} \otimes_{+} B_q$, $\beta \in M_w$, $-2\leq q \leq n-1$.
 \noindent Let \[                     
           \alpha = \ket{a_{i,j}} \otimes_{+} \ket{b_{q,k,l}} = \begin{pmatrix}
                   \ket{a_{i,j}} \otimes \ket{b_{q,k,l}}\\
                   \mathbf{0}_w
               \end{pmatrix},
           \beta = \begin{pmatrix}
                   \mathbf{0}_{mn}\\ \ket{d_{e,f}}
               \end{pmatrix}.           \]
 Since $\ket{d_{e,f}} \neq \mathbf{0}_w$, we have $\alpha \neq \beta$.

 \paragraph{Case 7.} $\alpha \in \ket{a_{p,i,j}} \otimes_{\widehat{p}} C_p,\beta \in \ket{a_{p',i',j'}} \otimes_{\widehat{p}} C_{p'}$, $1\leq p,p' \leq u$, $(i,j)\neq (i',j')$. 
 
 \noindent Let \[                     
           \alpha = \ket{a_{p,i,j}} \otimes_{\widehat{p}} \ket{c_{p,s,t}} = \begin{pmatrix}
                   \ket{a_{p,i,j}} \otimes \ket{c_{p,s,t,[0,m-1]}}\\
                   \mathbf{0}_{m_1}\\ \vdots\\ \ket{c_{p,s,t,[m,m+m_p-1]}}\\ \vdots\\ \mathbf{0}_{m_u}
               \end{pmatrix},\ 
           \beta = \ket{a_{p',i',j'}} \otimes_{\widehat{p}} \ket{c_{p',s',t'}} = \begin{pmatrix}
                   \ket{a_{p',i',j'}} \otimes \ket{c_{p',s',t',[0,m-1]}}\\
                   \mathbf{0}_{m_1}\\ \vdots\\ \ket{c_{p',s',t',[m,m+m_{p'}-1]}}\\ \vdots\\ \mathbf{0}_{m_u}
               \end{pmatrix}.
           \]
           Note that $ \ket{a_{p,i,j}},  \ket{a_{p',i',j'}}$ are both in the set $ \{\ket{0}, \ket{1}, \dots, \ket{n-1}\} \subset \mathcal{H}_n$. 
           Consequently  if $ \ket{a_{p,i,j}}\neq  \ket{a_{p',i',j'}}$ and, $\ket{c_{p,s,t,[0,m-1]}}\neq \mathbf{0}_{m} $ or $\ket{c_{p',s',t',[0,m-1]}}\neq \mathbf{0}_{m}$, then $\alpha \neq \beta$ as $\ket{a_{p,i,j}} \otimes \ket{c_{p,s,t,[0,m-1]}} \neq \ket{a_{p',i',j'}} \otimes \ket{c_{p',s',t',[0,m-1]}} $. 
           If $ \ket{a_{p,i,j}}\neq  \ket{a_{p',i',j'}}$, $\ket{c_{p,s,t,[0,m-1]}}= \ket{c_{p',s',t',[0,m-1]}}= \mathbf{0}_{m}$, we also have $\alpha \neq \beta$ as $\ket{c_{p,s,t}}$ and $\ket{c_{p',s',t'}}$ are distinct when $p = p'$, and are $m$-disjoint when $p \ne p'$.  Similarly, if $ \ket{a_{p,i,j}}= \ket{a_{p',i',j'}}$, $\alpha \neq \beta$.

 \paragraph{Case 8.}  $\alpha \in \ket{a_{p,i,j}} \otimes_{\widehat{p}} C_p$, $\beta \in M_w$, $1\leq p \leq u$.

 \noindent Let \[
           \alpha = \ket{a_{p,i,j}} \otimes_{\widehat{p}} \ket{c_{p,s,t}} = \begin{pmatrix}
                   \ket{a_{p,i,j}} \otimes \ket{c_{p,s,t,[0,m-1]}}\\
                   \mathbf{0}_{m_1}\\ \vdots\\ \ket{c_{p,s,t,[m,m+m_p-1]}}\\ \vdots\\ \mathbf{0}_{m_u}
               \end{pmatrix}, 
            \beta = \begin{pmatrix}
                   \mathbf{0}_{mn}\\ \ket{d_{e,f}}
               \end{pmatrix}.         
           \]
         Note that  $\ket{a_{p,i,j}}\in \{\ket{0}, \ket{1}, \dots, \ket{n-1}\}$. As $D$ and $C_p$ are  $(m,p;m_1, \dots, m_p, \dots, m_u)$-type disjoint, the two unit vectors
                 \[ \begin{pmatrix}
                    \ket{c_{p,s,t,[0,m-1]}}\\
                   \mathbf{0}_{m_1}\\ \vdots\\ \ket{c_{p,s,t,[m,m+m_p-1]}}\\ \vdots\\ \mathbf{0}_{m_u}
               \end{pmatrix},  \  \begin{pmatrix}
                   \mathbf{0}_{m}\\ \ket{d_{e,f}}
               \end{pmatrix}\] are distinct.  Thus $\alpha \neq \beta$.

    In summary, the cardinality of \( M \)  is the maximum.        
\qed

\end{proof}

\section*{Appendix B}

The desired $\mathrm{DLS}(n)$s for Lemma~\ref{DLs-6,10,14} with $n\in \{6,10,14\}$ are given as follows. Among them, the $n-4$ transversals that are neither on the main diagonal nor on the main anti-diagonal are displayed in different colors.
Moreover, the \(\mathrm{DLS}(14)\) here comes from the Ref.~\cite{cite}.
\[
\begin{array}{cccccc}
   \textbf{0} & \textcolor{red}{1} & 2 & 3 & \textcolor{blue}{4} & \textbf{5}\\
   1 & \textbf{2} & \textcolor{blue}{0} & \textcolor{red}{5} & \textbf{3} & 4\\
   4 & \textcolor{blue}{3} & \textbf{5} & \textbf{0} & \textcolor{red}{2} & 1\\
   \textcolor{red}{3} & 0 & \textbf{1} & \textbf{4} & 5 & \textcolor{blue}{2}\\
   \textcolor{blue}{5} & \textbf{4} & 3 & 2 & \textbf{1} & \textcolor{red}{0}\\
   \textbf{2} & 5 & \textcolor{red}{4} & \textcolor{blue}{1} & 0 & \textbf{3}\\
\end{array}
\quad \quad \quad 
\begin{array}{*{10}{c}}
\textbf{2} & \textcolor{blue}{9} & \textcolor{brown}{6} & 7 & \textcolor{red}{1} & \textcolor{yellow}{0} & \textcolor{cyan}{4} & \textcolor{green}{8} & 5 & \textbf{3}\\
\textcolor{cyan}{3} & \textbf{7} & \textcolor{blue}{5} & \textcolor{green}{1} & \textcolor{brown}{0} & 6 & 2 & \textcolor{yellow}{9} & \textbf{8} & \textcolor{red}{4}\\
\textcolor{yellow}{5} & \textcolor{red}{6} & \textbf{0} & \textcolor{blue}{3} & 8 & \textcolor{cyan}{7} & \textcolor{brown}{1} & \textbf{4} & 9 & \textcolor{green}{2}\\
\textcolor{blue}{8} & 1 & \textcolor{green}{9} & \textbf{4} & \textcolor{yellow}{2} & \textcolor{red}{5} & \textbf{0} & \textcolor{cyan}{6} & \textcolor{brown}{3} & 7\\
\textcolor{red}{7} & 8 & \textcolor{cyan}{2} & 0 & \textbf{3} & \textbf{9} & \textcolor{green}{6} & \textcolor{blue}{1} & \textcolor{yellow}{4} & \textcolor{brown}{5}\\
\textcolor{green}{0} & \textcolor{brown}{4} & \textcolor{red}{8} & \textcolor{yellow}{6} & \textbf{7} & \textbf{1} & 5 & 3 & \textcolor{blue}{2} & \textcolor{cyan}{9}\\
4 & \textcolor{cyan}{0} & 3 & \textbf{5} & \textcolor{blue}{6} & \textcolor{brown}{8} & \textbf{9} & \textcolor{red}{2} & \textcolor{green}{7} & \textcolor{yellow}{1}\\
\textcolor{brown}{9} & \textcolor{yellow}{3} & \textbf{1} & \textcolor{cyan}{8} & \textcolor{green}{4} & 2 & \textcolor{blue}{7} & \textbf{5} & \textcolor{red}{0} & 6\\
1 & \textbf{2} & 4 & \textcolor{red}{9} & \textcolor{cyan}{5} & \textcolor{green}{3} & \textcolor{yellow}{8} & \textcolor{brown}{7} & \textbf{6} & \textcolor{blue}{0}\\
\textbf{6} & \textcolor{green}{5} & \textcolor{yellow}{7} & \textcolor{brown}{2} & 9 & \textcolor{blue}{4} & \textcolor{red}{3} & 0 & \textcolor{cyan}{1} & \textbf{8}\\
\end{array}
\]

\vspace{1em}
\[
\centering
\begin{array}{*{14}{c}}
  \textbf{3} & \textcolor{blue}{11} & \textcolor{brown}{9} & \textcolor{red}{4} & \textcolor{yellow}{0} & \textcolor{green}{10} & 2 & \textcolor{cyan}{7} & \textcolor{purple}{12} & \textcolor{orange}{8} & \textcolor{teal}{13} & \textcolor{pink}{5} & 1 & \textbf{6}\\ 
  4 & \textbf{5} & \textcolor{blue}{13} & \textcolor{brown}{11} & \textcolor{red}{6} & \textcolor{yellow}{2} & \textcolor{green}{12} & \textcolor{purple}{0} & \textcolor{orange}{10} & \textcolor{teal}{1} & \textcolor{pink}{7} & 3 & \textbf{8} & \textcolor{cyan}{9}\\ 
  \textcolor{green}{0} & 6 & \textbf{7} & \textcolor{blue}{1} & \textcolor{brown}{13} & \textcolor{red}{8} & \textcolor{yellow}{4} & \textcolor{orange}{12} & \textcolor{teal}{3} & \textcolor{pink}{9} & 5 & \textcolor{red}{10} & \textcolor{cyan}{11} & \textcolor{purple}{2}\\ 
  \textcolor{yellow}{6} & \textcolor{green}{2} & 8 & \textbf{9} & \textcolor{blue}{3} & \textcolor{brown}{1} & \textcolor{red}{10} & \textcolor{teal}{5} & \textcolor{pink}{11} & 7 & \textbf{12} & \textcolor{cyan}{13} & \textcolor{purple}{4} & \textcolor{orange}{0}\\ 
  \textcolor{red}{12} & \textcolor{yellow}{8} & \textcolor{green}{4} & 10 & \textbf{11} & \textcolor{blue}{5} & \textcolor{brown}{3} & \textcolor{pink}{13} & 9 & \textbf{0} & \textcolor{cyan}{1} & \textcolor{purple}{6} & \textcolor{orange}{2} & \textcolor{teal}{7}\\ 
  \textcolor{brown}{5} & \textcolor{red}{0} & \textcolor{yellow}{10} & \textcolor{green}{6} & 12 & \textbf{13} & \textcolor{blue}{7} & 11 & \textbf{2} & \textcolor{cyan}{3} & \textcolor{purple}{8} & \textcolor{orange}{4} & \textcolor{teal}{9} & \textcolor{pink}{1}\\ 
  \textcolor{blue}{9} & \textcolor{brown}{7} & \textcolor{red}{2} & \textcolor{yellow}{12} & \textcolor{green}{8} & 0 & \textbf{1} & \textbf{4} & \textcolor{cyan}{5} & \textcolor{purple}{10} & \textcolor{orange}{6} & \textcolor{teal}{11} & \textcolor{pink}{3} & 13\\ 
  1 & \textcolor{teal}{10} & \textcolor{orange}{3} & \textcolor{purple}{13} & \textcolor{pink}{4} & \textcolor{cyan}{12} & \textbf{9} & \textbf{2} & 6 & \textcolor{green}{5} & \textcolor{yellow}{11} & \textcolor{red}{7} & \textcolor{brown}{0} & \textcolor{blue}{8}\\ 
  \textcolor{teal}{8} & \textcolor{orange}{1} & \textcolor{purple}{11} & \textcolor{pink}{2} & \textcolor{cyan}{10} & \textbf{7} & 13 & \textcolor{blue}{6} & \textbf{0} & 4 & \textcolor{green}{3} & \textcolor{yellow}{9} & \textcolor{red}{5} & \textcolor{brown}{12}\\ 
  \textcolor{orange}{13} & \textcolor{purple}{9} & \textcolor{pink}{0} & \textcolor{cyan}{8} & \textbf{5} & 11 & \textcolor{teal}{6} & \textcolor{brown}{10} & \textcolor{blue}{4} & \textbf{12} & 2 & \textcolor{green}{1} & \textcolor{yellow}{7} & \textcolor{red}{3}\\ 
  \textcolor{purple}{7} & \textcolor{pink}{12} & \textcolor{cyan}{6} & \textbf{3} & 9 & \textcolor{teal}{4} & \textcolor{orange}{11} & \textcolor{red}{1} & \textcolor{brown}{8} & \textcolor{blue}{2} & \textbf{10} & 0 & \textcolor{green}{13} & \textcolor{yellow}{5}\\ 
  \textcolor{pink}{10} & \textcolor{cyan}{4} & \textbf{1} & 7 & \textcolor{teal}{2} & \textcolor{orange}{9} & \textcolor{purple}{5} & \textcolor{yellow}{3} & \textcolor{red}{13} & \textcolor{brown}{6} & \textcolor{blue}{0} & \textbf{8} & 12 & \textcolor{green}{11}\\ 
  \textcolor{cyan}{2} & \textbf{13} & 5 & \textcolor{teal}{0} & \textcolor{orange}{7} & \textcolor{purple}{3} & \textcolor{pink}{8} & \textcolor{green}{9} & \textcolor{yellow}{1} & \textcolor{red}{11} & \textcolor{brown}{4} & \textcolor{blue}{12} & \textbf{6} & 10\\ 
  \textbf{11} & 3 & \textcolor{teal}{12} & \textcolor{orange}{5} & \textcolor{purple}{1} & \textcolor{pink}{6} & \textcolor{cyan}{0} & 8 & \textcolor{green}{7} & \textcolor{yellow}{13} & \textcolor{red}{9} & \textcolor{brown}{2} & \textcolor{blue}{10} & \textbf{4}\\ 
\end{array}
\]

\end{document}